\documentclass[a4paper,12pt]{article}
\usepackage{amsmath}
\usepackage{amsthm}
\usepackage{amssymb}
\usepackage{amsfonts}
\usepackage{esint}
\usepackage{bbm, dsfont}
\usepackage[english]{babel}
\usepackage{color}
\newcommand{\stef}[1]{{#1}}

\setlength{\parindent}{0cm}
\usepackage{fullpage}
\usepackage{enumerate}


\newtheorem{lemma}{Lemma}
\newtheorem{proposition}{Proposition}
\newtheorem{corollary}{Corollary}
\newtheorem{theorem}{Theorem}
\newtheorem{remark}{Remark}
\newtheorem*{remark*}{Remark}

\theoremstyle{definition}
\newtheorem{definition}{Definition}




\DeclareMathOperator*{\osc}{osc}
\DeclareMathOperator{\id}{Id}
\DeclareMathOperator{\ones}{\mathbbm{1}}

\newcommand{\R}{\mathbb{R}}

\newcommand{\Z}{\mathbb{Z}}

\newcommand{\N}{\mathbb{N}}

\newcommand{\e}{\varepsilon}

\mathchardef\emptyset="001F

\newcommand{\expec}[1]{\left\langle #1 \right\rangle}

\newcommand{\step}[1]{\noindent \textit{Step} #1.}

\newcommand{\Fcal}{\mathcal{F}}
\newcommand{\Mfrak}{\mathfrak{M}}
\newcommand{\C}{\mathbb{C}}





\begin{document}

\title{Moment bounds on the corrector of stochastic homogenization
    of non-symmetric elliptic finite difference equations}
  
\author{Jonathan Ben-Artzi%
  \thanks{\texttt{j.ben-artzi@imperial.ac.uk},
    Department of Mathematics, South Kensington Campus,
    Imperial College London,
    London SW7 2AZ,
    United Kingdom}
  \and 
  Daniel Marahrens%
  \thanks{\texttt{daniel.marahrens@mis.mpg.de}, Max-Planck-Institut f\"ur Mathematik
    in den Naturwissenschaften,
    Inselstra\ss e 22,
    04103 Leipzig,
    Germany}
  \and
  Stefan Neukamm%
  \thanks{\texttt{stefan.neukamm@tu-dresden.de},
    Technische Universit\"at Dresden, Fachrichtung Mathematik,
    01062 Dresden, Germany}
}
\maketitle
 
\paragraph{Abstract.}
We consider the corrector equation from the stochastic homogenization of uniformly elliptic finite-difference equations with random, possibly non-symmetric coefficients.
Under the assumption that the coefficients are stationary and ergodic  in the quantitative form of a Logarithmic Sobolev inequality (LSI), we obtain optimal bounds on the
corrector and its gradient in dimensions $d \geq 2$.
Similar estimates have recently been obtained in the special case of diagonal coefficients making extensive use of the maximum principle and scalar techniques.
Our new method only invokes arguments that are also available for
elliptic systems and does not use the maximum principle. In
particular, our proof relies on the LSI to quantify ergodicity and on
regularity estimates on the derivative of the discrete Green's
function in weighted spaces.
In the critical case $d=2$ our argument for the estimate on the
gradient of the elliptic Green's function uses a Calder\'{o}n-Zygmund
estimate in discrete weighted spaces, which we state and prove.
As applications, we provide a quantitative two-scale expansion and a quantitative approximation of the homogenized  coefficients.
\medskip

\tableofcontents

\section{Introduction}

We study the \textit{modified corrector equation}
\begin{equation}\label{eq:1}
  \frac{1}{T}\phi_T+\nabla^*(a\nabla\phi_T)=-\nabla^*(a\xi)\qquad\text{in }\Z^d,\ d\geq2,
\end{equation}
which is a discrete elliptic finite-difference equation for  the real valued function $\phi_T$, called the \textit{modified
  corrector}. As we explain below, it arises in  stochastic
homogenization. The symbols $\nabla$ and $\nabla^*$ denote the discrete
(finite-difference) gradient and the  negative divergence, see
Section~\ref{S:FW} below for the precise definition. In the modified corrector equation
$T$ denotes a positive ``cut-off'' parameter (which we think of to be very large),
and $\xi\in\R^d$ is a vector, fixed throughout this paper. We consider \eqref{eq:1} with a \textit{random, uniformly elliptic} field of coefficients
$a:\Z^d\to\R^{d\times d}$. To be precise, for a fixed constant of
ellipticity $\lambda>0$  we denote
by $\Omega_0$ those matrices $a_0\in\R^{d\times d}$ that are uniformly
elliptic in the sense that
\begin{equation}\label{ass:ell}
  \forall v\in\R^d\,:\qquad v\cdot a_0v\geq \lambda|v|^2\ \ \text{ and
  }\ \ |a_0v|\leq|v|,
\end{equation}
and define the set of admissible coefficient fields
\begin{equation*}
  \Omega:=\Omega_0^{\Z^d}=\{\,a:\Z^d\to\Omega_0\,\}.
\end{equation*}
In this paper we derive optimal bounds for finite
moments of the modified corrector and its gradient, under the assumption
that the coefficients are distributed according to a \textit{stationary and
ergodic} law on $\Omega$, where ergodicity holds in the \textit{quantitative} form of
a \textit{Logarithmic Sobolev Inequality} (LSI), see Definition~\ref{def:LSI}
below. \textcolor{blue}{The bounds in the symmetric case and in the non-symmetric discrete case are new. Below we shall discuss in more detail the novelties of this paper and their relationship with existing results.} Our main results are presented in Theorems~\ref{T1} and
\ref{T2} below. For easy reference, let us state them already here, somewhat
  informally. Throughout the paper, we write $\expec{\cdot}$ for the expected value
  associated to the law on $\Omega$.
\smallskip

{\bf The first result} concerns a bound on all moments of the gradient
of the corrector. Under the assumptions of stationarity and LSI, we
have for all $1\leq p<\infty$ and $T\geq 2$ that 
\[
    \langle |\nabla\phi_T(0) + \xi|^{2p} \rangle \le C |\xi|^{2p},
\]
where the constant $C$ is independent of $T$. (Note that here and
throughout the paper the 
constant ``2'' in ``$T\geq 2$'' has no special meaning. In fact, since we are interested in the
behavior $T\uparrow\infty$, we could replace ``$2$'' with any number greater than $1$).
\smallskip

{\bf The second result} is a bound on the corrector itself. Under the same assumptions (even under a slightly weaker assumption than LSI, see Theorem~\ref{T2} below), we have that
\[
    \langle |\phi_T(0)|^{2p} \rangle \le C \expec{|\nabla \phi_T(0)+\xi|^{2p}}\times
    \begin{cases}
      (\log T)^p&\text{for }d=2,\\
      1&\text{for }d>2.
    \end{cases}
\]
\medskip

These estimates are optimal, even in dimension $d=2$ where we
  recover the optimal logarithmic rate of divergence of the moment of
  $\phi_T$. While the first result is relatively easy to \textcolor{blue}{prove}, the argument for
the second result is substantially harder and the main purpose of our
paper. Let us emphasize that the coefficients in \eqref{eq:1} are not
  assumed to be \textcolor{blue}{diagonal or even symmetric}. Thus, equation~\eqref{eq:1} in general \textit{does not enjoy a maximum
principle}; this constitutes a major difference to previous works
where the maximum principle played a major role and exclusively the case of diagonal
coefficients was studied, see e.g.\
\cite{GO1,GO2,GNO1}.  In fact, the method presented in this paper only 
relies on arguments that are also available in the case of elliptic
systems. The extension of our findings to discrete systems, in
particular a discrete version of linear elasticity, is work in
progress. Recently, Bella and Otto
considered in   \cite{Bella-Otto-14} \textit{systems} of
elliptic equations (on $\R^d$) with periodic (but still
random) coefficients. As a main result, they obtain moment bounds on
the \textit{gradient} of the corrector with help of an argument that avoids the maximum
principle and even the use of Green's functions. Still, the derivation
of moment bounds on the \textit{corrector itself} -- which is the
main purpose of our paper -- remains open.
\medskip

{\bf Applications and relation to stochastic homogenization.} The modified corrector equation
\eqref{eq:1} appears in stochastic 
homogenization: For $\e>0$ and $a\in\Omega$ distributed according to
$\expec{\cdot}$ 
\color{blue}
we consider the equation
\begin{equation}\label{eq:13}
u^\e-\nabla^*_\e(a(\tfrac{\cdot}{\e})\nabla_\e u^\e)=f\qquad\text{in }\e\Z^d,
\end{equation}
where $\nabla^\e$ and $\nabla^{\e*}$ denote the discrete gradient and divergence (see Section~\ref{sec:qtwoscale}). As
shown in \cite{Papanicolaou-Varadhan-79, Kozlov-79, Kozlov-87, Kunnemann-83}, in the homogenization limit $\e\downarrow 0$
the solution $u^\e(a;\cdot)$ converges for almost
every $a\in\Omega$ to the unique solution $u^0\in H^1(\R^d)$ of the homogenized equation
\color{black}
\begin{equation*}
  u_{\hom}-\operatorname{div}(a_{\hom}\nabla u_{\hom})=f\qquad\text{in }\R^d.
\end{equation*}
Here $a_{\hom}\in\Omega_0$ is deterministic and determined by the
formula
\begin{equation}\label{eq:hom-formula}
  e_i\cdot
  a_{\hom}e_j=\lim\limits_{T\uparrow\infty}\expec{(e_i+\nabla\phi_{T,i}(0))\cdot
    a(0)(e_j+\nabla\phi_{T,j}(0))},
\end{equation}
where $\phi_{T,j}$ is the solution to \eqref{eq:1} with $\xi=e_j$.
Let us comment on the appearance of the limit as $T\uparrow\infty$ in this formula.
Formally, and in analogy to periodic homogenization, we expect that
\[
  e_i\cdot
  a_{\hom}e_j=
\expec{(e_i+\nabla\phi_{i}(0))\cdot a(0)(e_j+\nabla\phi_{j}(0))},
\]
where $\phi_i$ is a solution to the \textit{corrector equation}
\begin{equation}\label{eq:corr}
  \nabla^*(a(\nabla\phi_i+e_i))=0\qquad\text{in }\Z^d,
\end{equation}
that is \textit{stationary} in the sense of 
\begin{equation}\label{LMC:1a}
  \phi_i(a;x+z)=\phi_i(a(\cdot+z);x)\qquad\text{$\expec{\cdot}$-almost
    every
  }a\in\Omega\text{ and all }x,z\in\Z^d.
\end{equation}
In the case of deterministic, periodic homogenization, it suffices to
solve \eqref{eq:corr} on the reference torus of periodicity and
existence essentially follows from Poincar\'e's inequality on the torus. In the stochastic case, the corrector equation~\eqref{eq:corr} has to be solved  on the
infinite space $\Z^d$ subject to the stationarity
condition~\eqref{LMC:1a}.
\stef{Since this is not possible in general, the corrector equation
  \eqref{eq:corr} is typically regularized by
adding the zeroth-order term $\frac{1}{T}\phi_i$ with parameter $T\gg
1$.} In fact this was already done in the pioneering work of
Papanicolaou and Varadhan \cite{Papanicolaou-Varadhan-79} and leads to the modified corrector equation
\eqref{eq:1}, which in contrast to \eqref{eq:corr}, admits for all
$a\in\Omega$ a unique bounded solution
$\phi_T(a;\cdot)\in\ell^\infty(\Z^d)$ that automatically is
stationary, see Lemma~\ref{LMC} below. While simple energy bounds, cf.~\eqref{T1.3}, make it relatively easy to pass to the regularization-limit
$T\uparrow\infty$ on the level of $\nabla\phi_T$ (and thus in the
homogenization formula \eqref{eq:hom-formula}), it is difficult, and
in general even impossible, to do the same on the level of
$\phi_T$ itself. For similar reasons (and in contrast to the periodic case), it is
difficult to \textit{quantify} errors in stochastic homogenization, such as the
homogenization error $u_\e-u_{\hom}$. 
{In Section~\ref{sec:qtwoscale} we provide an optimal $H^1$-estimate for the two-scale expansion
  \begin{equation}\label{eq:11}
    u^\e\approx  u_{\hom}+\e\sum_{j=1}^d\phi_j(\tfrac{\cdot}{\e})\partial_ju_{\hom}(\cdot).
  \end{equation}
  in dimensions $d\geq 3$ and obtain an estimate for the homogenization error as corollary.
}
\medskip

{\bf Previous quantitative results  and novelty of the paper.}
For periodic homogenization the quantitative behavior of
\eqref{eq:13} \stef{and the expansion \eqref{eq:11}} is
reasonably well understood (e.g.\ see
\cite{Avellaneda-Lin-87, Allaire-Amar-99, Gerard-Varet-12}). 
\stef{In the stochastic case, due to the lack of compactness,
  the quantitative understanding of \eqref{eq:13} is less developed
  and in most cases only suboptimal estimates are obtained, see
  \cite{Yurinskii-76, Naddaf-Spencer-98, Conlon-Naddaf-00,
    Conlon-Spencer-13, Caputo-Ioffe-03, Bourgeat-04, Armstrong-Smart-14}.}   \stef{In particular, the first
  quantitative result is due to Yurinskii \cite{Yurinskii-76} who
  proved an algebraic rate of convergence (with an suboptimal exponent) for the homogenization error $u_\e-u_{\hom}$ in dimensions
  $d>2$ for algebraically mixing coefficients. For refinements and extensions to dimensions
  $d\geq 2$ we refer to the inspiring work by Naddaf and Spencer
  \cite{Naddaf-Spencer-98}, and the recent works by Conlon and Naddaf
  \cite{Conlon-Naddaf-00} and Conlon and Spencer
  \cite{Conlon-Spencer-13}. Most recently, Armstrong and Smart
  \cite{Armstrong-Smart-14} obtained the first result on the
  homogenization error for the stochastic homogenization of convex
  minimization problems. Their
  approach, which builds up on ideas of Avellaneda and Lin
  \cite{Avellaneda-Lin-87}, substantially differs from what has been done before in stochastic
  homogenization of divergence form equations. It in particular applies to the continuum version of
 \eqref{eq:13} with symmetric coefficients, and potentially extends to symmetric systems (at least under sufficiently strong ellipticity
  assumptions). For results on non-divergence form  elliptic
  equations see \cite{Cafarelli-Souganidis-10, Armstrong-Smart-13}.}
\medskip

While qualitative stochastic homogenization only requires $\expec{\cdot}$ to be stationary and
ergodic, the derivation of error estimates requires a quantification
of ergodicity. \stef{\textcolor{blue}{Pursuing} optimal error bounds}, in a series of papers
\cite{GO1,GO2,GO3,GNO1, GNO3,MO1,LNO1, Mourrat-Otto-14} (initiated by Gloria and Otto) a quantitative theory for \eqref{eq:13} is developed
based  on \emph{Spectral Gap} (SG) and LSI as tools to quantify ergodicity. In contrast to earlier results, the estimates in the papers
mentioned above are optimal: E.g.\ \cite{GNO1} contains a complete and optimal
analysis of the approximation of $a_{\hom}$ via periodic representative volume elements and \cite{GNO3}
establishes optimal estimates for the homogenization error and the
expansion in \eqref{eq:11}. A fundamental step in the derivation of
these results are \stef{optimal} moment bounds for the corrector,  see
\cite{GO1,GO2,GNO1}. The extension to the continuum case has been
discussed in recent papers: In \cite{GO3} moment bounds on the corrector
and its gradient have been obtained for scalar equations with elliptic
coefficients. 
\medskip

In the present contribution we continue the theme of quantitative
stochastic homogenization  and present a new approach that relies on
methods, that -- we believe -- extend with only few modifications to
the case of systems satisfying sufficiently strong ellipticity assumptions. In the works
discussed above, arguments restricted to scalar equations are used at
central places. Most significantly, \textit{Green's function
estimates} are required and derived via De Giorgi-Nash-Moser regularity
theory (\stef{e.g.\
see \cite[Theorem~3]{GNO1}}). This method is based on the \textit{maximum principle}, which
holds for diagonal coefficients, but not for general symmetric or possibly non-symmetric
coefficients as considered here. In fact, in our case the Green's
function is not in general positive everywhere. We derive the
required estimates on the gradient of the Green's function from the
corresponding estimate on the constant coefficient Green's function by a
perturbation argument that invokes a Helmholtz projection; this is inspired by \cite{Conlon-Spencer-11}. Secondly, previous works rely on a
gain of stochastic integrability obtained by a nonlinear Caccioppoli
inequality (see Lemma~2.7 in \cite{GO1}). In the present contribution
we appeal to an alternative argument that invokes the LSI instead. While SG, which is weaker than LSI (see~\cite{Guionnet-Zegarlinski-03}),  has
been introduced into the field of stochastic homogenization by Naddaf
and Spencer \cite[Theorem~1]{Naddaf-Spencer-98} (in form of the
Brascamp-Lieb inequality), the LSI has been used in \cite{MO1} in the
context of stochastic homogenization to obtain optimal annealed estimates on the gradient of the Green's
function and bounds on the random part of the homogenization error
  $u_\e - \langle u_\e \rangle$. 

\medskip
Note that in the special case of diagonal
coefficients (i.e.\ when the maximum principle \stef{and the
De~Giorgi-Nash-Moser regularity theory} is available) our
results are not new: The $T$-independent results on $\phi_T$
and $\nabla \phi_T$ in $d>2$ dimensions have already been established
in~\cite{GO1,GNO1} under the slightly weaker assumption SG on the
statistics (see \eqref{eq:SG} below), \stef{and the estimate on the corrector in the optimal
form of $\langle |\phi_T|^{2p} \rangle \le C (\log T)^p$ with a
constant independent of $T$ is obtained in \cite{GNO1}.}
\medskip

{\bf Relation to random walks in random environments.} There is a
strong link between stochastic homogenization and random walks in
random environments (see \cite{Biskup-11} and \cite{Kumagai-14} for
recent surveys). Suppose for a moment that $\expec{\cdot}$
concentrates on diagonal matrices. Then for each
diagonal-matrix-valued field $a:\Z^d\to\in\R^{d\times d}$, we may interpret
\eqref{eq:13} as a conductance network, where each edge
$[x,x+e_i]$ ($x\in\Z^d$, $i=1,\ldots,d$) is endowed with the
conductance $a_{ii}(x)$. The elliptic operator $\nabla^*(a\nabla)$
generates a stochastic process, called the \emph{variable speed random walk}
$X=(X_a(t))_{t\geq 0}$ in a random environment with law $\expec{\cdot}$.
Using arguments from stochastic homogenization, Kipnis and
Varadhan \cite{Kipnis-Varadhan-86} (see also \cite{Kunnemann-83} for
an earlier result) show
that the law of the rescaled process
$\sqrt{\e}X(\e t)$ converges weakly to that of a Brownian motion
with covariance $2a_{\hom}$. This \textit{annealed} invariance
principle for $X$ has been upgraded to a \textit{quenched} result by
Sidoravicious and Sznitman \cite{Sidoravicius-Sznitman-04}. The key
ingredient in their argument is to prove that the ``anchored
corrector'' (i.e.\ the function $\varphi$ introduced in
Corollary~\ref{cor:1} (a) below) satisfies a \textit{quenched
sublinear growth} property. 
The quantitative analysis derived in the present paper is stronger. Indeed, our
estimate on $\nabla\phi_T$ almost immediately
implies that the anchored corrector grows sublinearly. On top of
  that in dimensions $d>2$ the moment bound on $\phi_T$ implies that the anchored corrector is almost
bounded, in the sense that it grows \textit{slower than any rate}, see
Corollary~\ref{cor:1} and the subsequent remark.
\smallskip 

If the coefficients are not diagonal, then \eqref{eq:13} is not any
longer related to a random conductance model. As mentioned before, for
non-symmetric $a$ (and even for certain symmetric coefficients) the
maximum principle for $\nabla^*(a\nabla)$ generally fails to hold. In that case
the semigroup generated by $\nabla^*(a\nabla)$ is not a Markov process and there is no natural
probabilistic interpretation for \eqref{eq:13}. {This may also be seen in terms of Dirichlet forms. While the (non-symmetric) elliptic operator $-\operatorname{div}(a_{\hom}\nabla)$ acting on functions on $\R^d$ generates a Dirichlet form $\int_{\R^d} \nabla u \cdot a_{\hom} \nabla v dx$ in the sense of \cite[Definition~I.4.5]{Ma-Rockner} and a corresponding Markov process, the discrete operator $\nabla^*(a\nabla)$ with associated bilinear form $\sum_{\Z^d}\nabla u \cdot a \nabla v$ defined on $\ell^2(\Z^d)\times\ell^2(\Z^d)$  does not. Indeed, the contraction property~(4.4) in \cite{Ma-Rockner} (which encodes a maximum principle) generally fails to hold in the non-diagonal discrete case.}
However, the  limiting process can be approximated by (non-symmetric)
Markov processes, see \cite{Deuschel-Kumagai-13} for a recent construction.  
\medskip

Let us finally remark that we do not use any ingredients from probability theory except for the quantification of ergodicity via SG and LSI in this paper. Furthermore,
since we view our present contribution as a first step towards systems
(which certainly are unrelated to
probability theory), we do not further investigate the connection to
random walks in the present paper. 
\bigskip

{\bf Outline of the paper.} In Section~\ref{S:FW}, we present the main
results of our paper and give a brief sketch of our proof. The proof
of the main results and auxiliary lemmas are contained in Section~\ref{S:P}. 
Let us mention that in the critical dimension $d=2$, we invoke a Calder\'{o}n-Zygmund estimate
on weighted $\ell^p$-spaces on $\Z^d$. We give a proof of this
estimate, which may be of independent interest, in Section~\ref{S:CZ}.  Finally, in Section \ref{sec:app} we present some applications, including a quantitative two-scale expansion and a variance estimate for a representative volume approximation of the homogenized coefficients.
\bigskip

{\it{\bf Acknowledgements.} The authors gratefully acknowledge Felix
  Otto for suggesting the problem and for helpful discussions.
  J.~B.-A.~and S.~N.~thank the Max-Planck-Institute for Mathematics in
  the Sciences, Leipzig, for its hospitality. S.~N.~was
partially supported by ERC-2010-AdG no.267802 AnaMultiScale. }

\section{Main results and sketch of proof}\label{S:FW}
\subsection{General framework}

{\bf Discrete functions and derivatives.} 
Let $\{e_i\}_{i=1}^d$ denote the canonical basis of $\R^d$. For a
scalar function $u:\Z^d\to\R$ and a vector field $g:\Z^d\to\R^d$
with components $g=(g_1,\ldots,g_d)$ we define the
discrete gradient $\nabla u:\Z^d\to\R^d$ and negative divergence $\nabla^*g:\Z^d\to\R$ as follows:
\begin{align*}
  &\nabla u:=(\nabla_1u,\ldots,\nabla_du),\qquad \nabla^*g:=\sum_{i=1}^d\nabla^*_ig_i,\qquad\text{where}\\
  &\nabla_iu(x):=u(x+e_i)-u(x),\qquad
  \nabla^*_iu(x):=u(x-e_i)-u(x).
\end{align*}
We denote by $\ell^p(\Z^d)$,
$1\leq p\leq \infty$, the space of functions $u:\Z^d\to\R$ with
$\|u\|_{\ell^p}<\infty$, where
$\|u\|_{\ell^p}:=\left(\sum_{x\in\Z^d}|u(x)|^p\right)^{\frac{1}{p}}$
for $p<\infty$ and $\|u\|_{\ell^\infty}:=\sup_{x\in\Z^d}|u(x)|$.  Note that $\nabla$ and $\nabla^*$ are adjoint: We have the discrete integration by parts formula
\begin{equation*}
  \sum_{x\in\Z^d}\nabla u(x)\cdot g(x)=\sum_{x\in\Z^d}u(x)\nabla^*g(x)
\end{equation*}
for all exponents $1\le p,q \le \infty$ such that $1 = \frac{1}{p} + \frac{1}{q}$ and all functions $u\in\ell^p(\Z^d)$ and $g\in\ell^q(\Z^d,\R^d)$.
\medskip

{\bf Random coefficients and quantitative ergodicity.}
In order to describe random coefficients, we endow $\Omega$ with the product topology induced by $\R^{d \times
  d}$ and denote by $C_b(\Omega)$ the set of continuous functions
$\zeta:\Omega\to\R$ that are uniformly bounded in the sense that
\begin{equation*}
  \|\zeta\|_\infty:=\sup_{a\in\Omega}|\zeta(a)|<\infty.
\end{equation*}
Throughout this work, we consider a probability measure on $\Omega$ with respect to the Borel-$\sigma$-algebra. Following the convention in statistical mechanics, we call this probability measure an \emph{ensemble} and write $\expec{\cdot}$ for the associated expected
value, the ensemble average. We assume that $\expec{\cdot}$ is
\textit{stationary} w.~r.~t.~translation on $\Z^d$, i.e.~for all
$x\in\Z^d$, the mapping $\tau_x : \Omega\to\Omega, a\mapsto a(\cdot+x)$ is
measurable and measure preserving:
\[
 \forall \zeta: \Omega\to\R:\quad \langle \zeta( \tau_x \cdot) \rangle = \langle \zeta(\cdot) \rangle.
\]
Our key  assumption is that $\expec{\cdot}$ is \textit{quantitatively ergodic} where the ergodicity is quantified through either LSI or SG. To be precise, we make the following definitions:
\begin{definition}[Definition~1 in~\cite{MO1}]\label{def:LSI}
 We say that $\langle \cdot \rangle$ satisfies the LSI with constant $\rho>0$ if
\begin{equation}
  \label{eq:LSI}
  \expec{\zeta^2\log\frac{\zeta^2}{\expec{\zeta^2}}}\le \frac{1}{2\rho}\expec{\sum_{x\in\Z^d} \Big( \osc_{a(x)} \zeta\Big)^2 }.
\end{equation}
for all $\zeta\in C_b(\Omega)$.
\end{definition}
Here the {\it oscillation} of a
function $\zeta\in C_b(\Omega)$ is defined by taking the oscillation over all $\tilde a\in\Omega$ that coincide with $a$ outside of $x\in\Z^d$, i.e.\
\begin{multline}\label{eq:osc}
  \osc_{a(x)} \zeta(a) := \sup\{ \zeta(\tilde a) \ | \ \tilde a\in \Omega \text{ s.t.\ } \tilde a(y)=a(y)\ \forall y\neq x \}\\ - \inf\{ 
\zeta(\tilde a) \ | \ \tilde a\in \Omega \text{ s.t.\ } \tilde a(y)=a(y)\ \forall y\neq x  \}.
\end{multline}
The continuity assumption on $\zeta$ ensures that the oscillation is well-defined. A weaker form of quantitative ergodicity is the SG which is defined as follows.
\begin{definition}\label{def:SG}
 We say that $\langle \cdot \rangle$ satisfies the SG with constant $\rho>0$ if
\begin{equation}
  \label{eq:SG}
  \expec{(\varphi-\expec{\varphi})^2}\le \frac{1}{\rho}\expec{\sum_{x\in\Z^d} \Big( \osc_{a(x)} \varphi\Big)^2 }
\end{equation}
for all $\varphi\in C_b(\Omega)$.
\end{definition}
The SG~\eqref{eq:SG} is automatically satisfied if LSI~\eqref{eq:LSI} holds, which may be seen by expanding $\zeta = 1+\epsilon\varphi$ in powers of $\epsilon$.
Moreover, LSI and SG are satisfied in the case of independently and identically distributed coefficients, i.e.~when $\expec{\cdot}$ is the
$\Z^d$-fold product of a probability measure on $\Omega_0$,
cf.~\cite[Lemma~1]{MO1}. We refer to~\cite{Guionnet-Zegarlinski-03}
for a recent exposition on LSI and to~\cite{GNO1} for a systematic application of SG to stochastic
homogenization.

\medskip

\subsection{Main results} 
Throughout this paper the modified corrector $\phi_T$ is defined as the
unique bounded solution to
\eqref{eq:1}, see Lemma~\ref{LMC} below for details. Our first result yields boundedness of the finite
moments of  $\nabla\phi_T$.

\begin{theorem}\label{T1}
  Assume that $\langle \cdot \rangle$ is stationary and satisfies LSI~\eqref{eq:LSI} with constant $\rho>0$.  Then the modified corrector defined via \eqref{eq:1} satisfies
  \begin{equation}\label{eq:Dphi}
    \langle |\nabla\phi_T(x) + \xi|^{2p} \rangle \le C(d,\lambda,p,\rho) |\xi|^{2p}
  \end{equation}
   for all $x\in\Z^d$, $p<\infty$ and $T\ge2$.
  Here and throughout this work, $C(d,\lambda,p,\rho)$ stands for a constant which may change from line to line and that only depends on the exponent $p$, the LSI-constant $\rho$, the ellipticity ratio $\lambda$ and the dimension $d$.
 \end{theorem}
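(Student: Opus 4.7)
\textbf{Proof proposal for Theorem~\ref{T1}.}

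The plan is to combine a deterministic (in fact energy-based) $L^2$-in-average estimate with the Logarithmic Sobolev Inequality to bootstrap to arbitrary moments. More precisely, I will apply the LSI to the random variable $F_T:=|\nabla\phi_T(0)+\xi|$, after verifying that the vertical oscillation of $F_T$ with respect to a single coefficient $a(x)$ admits a pointwise bound in terms of a second mixed derivative of the Green's function $G_T$ of $\frac{1}{T}+\nabla^*(a\nabla)$ times $F_T(x)$ itself. Using a deterministic $\ell^2$-bound on $\nabla\nabla G_T(0,\cdot)$ (uniform in $T$), the LSI then produces a recursion on the $2p$-th moments that closes for any $p<\infty$.

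First, I would record the base case $p=1$. Since $\phi_T$ is the unique bounded solution of \eqref{eq:1} and the equation is invariant under the shifts $\tau_z$, the function $\phi_T$ is stationary. Testing \eqref{eq:1} with $\phi_T$, using stationarity to replace the sum over $\Z^d$ by $\expec{\cdot}$, and applying uniform ellipticity yields the standard energy bound
\begin{equation*}
\frac{1}{T}\langle\phi_T^2\rangle+\langle(\nabla\phi_T+\xi)\cdot a(\nabla\phi_T+\xi)\rangle\le C(\lambda)|\xi|^2,
\end{equation*}
so in particular $\langle|\nabla\phi_T(0)+\xi|^2\rangle\le C(\lambda)|\xi|^2$.

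Next I would compute the oscillation that enters the right-hand side of the LSI. Fixing $x\in\Z^d$ and replacing $a(x)$ by some admissible $\tilde a(x)$ produces $\tilde\phi_T$ whose difference $\delta\phi_T=\tilde\phi_T-\phi_T$ solves $(\frac{1}{T}+\nabla^*(\tilde a\nabla))\delta\phi_T=-\nabla^*\bigl((\tilde a-a)(\nabla\phi_T+\xi)\bigr)$, a finite-difference equation whose right-hand side is supported at $x$. Representing $\delta\phi_T$ with the Green's function $\tilde G_T$ associated to $\tilde a$ and applying $\nabla$ in the $0$-variable gives the pointwise bound
\begin{equation*}
\bigl|\nabla\delta\phi_T(0)\bigr|\;\le\; C(\lambda)\,|\nabla_y\nabla_x\tilde G_T(0,x)|\,\bigl(|\nabla\phi_T(x)+\xi|+|\nabla\tilde\phi_T(x)+\xi|\bigr),
\end{equation*}
from which, after taking the supremum and infimum over admissible $\tilde a(x)$ and using the deterministic annealed/energy identity $\sum_x|\nabla_y\nabla_x G_T(0,x)|^2\le C(\lambda)$ (proved by testing the Green's function equation with $G_T$ and invoking uniform ellipticity, independent of $T$), one obtains $\bigl(\osc_{a(x)}(\nabla\phi_T(0)+\xi)\bigr)^2\lesssim |\nabla_y\nabla_x G_T(0,x)|^2\,\sup_{\tilde a}|\nabla\tilde\phi_T(x)+\xi|^2$, where the supremum can in turn be controlled by $F_T(x)^2$ at the expense of an additional $|\tilde a-a|^2\le C$ factor.

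Finally I would plug this into the LSI applied to $\zeta=(1+F_T^{\,2})^{p/2}$, yielding after elementary manipulations a recursion of the form
\begin{equation*}
\langle F_T(0)^{2p}\rangle\;\le\; C(p,\lambda,\rho)\,\Bigl(\langle F_T(0)^{2}\rangle^{p}+\sum_{x\in\Z^d}|\nabla_y\nabla_x G_T(0,x)|^{2}\langle F_T(x)^{2p}\rangle\Bigr),
\end{equation*}
and by stationarity $\langle F_T(x)^{2p}\rangle=\langle F_T(0)^{2p}\rangle$, so the sum factorises through the $\ell^2$-bound on $\nabla\nabla G_T(0,\cdot)$. Absorbing the resulting constant (which can be made $<1$ by a standard truncation/renormalisation of the oscillation, or by iterating the LSI in small $p$-increments starting from the $p=1$ base case) closes the estimate.

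The main obstacle I anticipate is making the oscillation estimate rigorous in the non-symmetric setting: one needs a version of the Green's function second-derivative $\ell^2$-identity that does not rely on the maximum principle or on pointwise decay of $\nabla G_T$, and one must be careful that the oscillation over $\tilde a(x)$ couples $\phi_T$ and $\tilde\phi_T$, which is typically handled by a chain-rule type argument involving a one-parameter interpolation between $a(x)$ and $\tilde a(x)$ and a differential-inequality version of LSI. Once this step is in place, the rest is standard algebra on moments.
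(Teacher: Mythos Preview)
Your overall strategy---energy estimate for the second moment, oscillation bound via the Green's function, the deterministic $\ell^2$-estimate $\sum_x|\nabla\nabla G_T(0,x)|^2\le C(\lambda)$, and LSI to bootstrap---is exactly the paper's approach. Two points in your sketch are genuine gaps, and the paper resolves each with a short but specific trick.

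\textbf{The oscillation bound.} Your representation gives $\nabla\delta\phi_T(0)$ in terms of $\nabla\nabla\tilde G_T(0,x)$ and $\nabla\phi_T(x)+\xi$, and then you propose to control $\sup_{\tilde a}|\nabla\tilde\phi_T(x)+\xi|$ by $|\nabla\phi_T(x)+\xi|$ ``at the expense of an additional $|\tilde a-a|^2\le C$ factor''. That last step does not follow; the perturbed gradient is not a priori controlled by the unperturbed one. The paper's Lemma~\ref{L:RP} fixes this by a two-step algebraic swap: first evaluate the representation at $y=x$ and use the deterministic bound $|\nabla\nabla\tilde G_T(x,x)|\le\lambda^{-1}$ (a consequence of \eqref{P1.18}) to get $|\nabla\tilde\phi_T(x)-\nabla\phi_T(x)|\le 2\lambda^{-1}|\nabla\phi_T(x)+\xi|$; then write the \emph{reverse} representation (swapping $a\leftrightarrow\tilde a$) so that the Green's function appearing is $G_T(a;\cdot,\cdot)$ and the field on the right is $\nabla\tilde\phi_T(x)+\xi$, which by the first step is controlled by $\nabla\phi_T(x)+\xi$. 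This yields $\osc_{a(x)}(\nabla_j\phi_T(0)+\xi_j)\le C(d,\lambda)\,|\nabla\nabla G_T(a;0,x)|\,|\nabla\phi_T(a;x)+\xi|$ with no tildes anywhere, which is what you need for the stationarity argument. No interpolation in $a(x)$ is required.

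\textbf{The absorption step.} Your displayed recursion has a constant $C(p,\lambda,\rho)$ in front of the oscillation term, and you flag the absorption as an obstacle. The paper does not iterate in small $p$-increments; instead it invokes Lemma~\ref{L1} (borrowed from \cite{MO1}), which is a consequence of LSI of the form
\begin{equation*}
\langle|\zeta|^{2p}\rangle^{1/2p}\le C(\delta,p,\rho)\,\langle|\zeta|^2\rangle^{1/2}+\delta\,\Big\langle\Big(\sum_x\big(\osc_{a(x)}\zeta\big)^2\Big)^p\Big\rangle^{1/2p}
\end{equation*}
with an \emph{arbitrary} small $\delta>0$. Applying this componentwise to $\zeta=\nabla_i\phi_T(0)+\xi_i$, inserting the oscillation bound above, and then using (via H\"older in $x$ and stationarity, exactly as in the paper's Step~2) that
\begin{equation*}
\Big\langle\Big(\sum_x|\nabla\nabla G_T(0,x)|^2|\nabla\phi_T(x)+\xi|^2\Big)^p\Big\rangle\le\lambda^{-2p}\,\langle|\nabla\phi_T(0)+\xi|^{2p}\rangle,
\end{equation*}
one gets a right-hand side of the form $C(\delta)|\xi|+\delta\,C(d,\lambda)\,\langle|\nabla\phi_T(0)+\xi|^{2p}\rangle^{1/2p}$, and choosing $\delta$ small closes the estimate in one shot. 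The small-$\delta$ form of Lemma~\ref{L1} is precisely where LSI (as opposed to mere SG) is used.
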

As already mentioned earlier, the lower bound ``2'' for $T$ is arbitrary and may be replaced by any other constant greater than 1.
The second result establishes moment bounds on the corrector itself.
More precisely, we establish control of moments of $\phi_T$ by moments
of $\nabla\phi_T$. As opposed to Theorem~\ref{T1}, we just need to assume that the ensemble satisfies SG, i.e.~Definition~\ref{def:SG}.

\begin{theorem}\label{T2}
  Assume that $\langle \cdot \rangle$ is stationary and satisfies SG~\eqref{eq:SG} with constant $\rho>0$. {There exists $p_0 = p_0(d,\lambda)$ such that} the the modified corrector defined via~\eqref{eq:1} satisfies
  \begin{equation}\label{eq:phi}
    \langle |\phi_T(x)|^{2p} \rangle \le C(d,\lambda,p,\rho) \expec{|\nabla \phi_T(x)+\xi|^{2p}}\times
    \begin{cases}
      (\log T)^p&\text{for }d=2,\\
      1&\text{for }d>2,
    \end{cases}
  \end{equation}
 for all {$x\in\Z^d$, $p\ge p_0$ and $T\ge2$.}
\end{theorem}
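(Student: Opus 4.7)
My plan is to apply the $L^{2p}$-version of the spectral gap inequality to $\phi_T(x)$ itself, bound each oscillation through a Green's-function-based sensitivity estimate, and close the argument using annealed bounds on $\nabla G_T$.

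Taking the expectation of~\eqref{eq:1} and using stationarity of $\phi_T$ (Lemma~\ref{LMC}) shows $\langle\phi_T(x)\rangle=0$; iterating~\eqref{eq:SG} in the spirit of~\cite{MO1} then yields
\[
\langle|\phi_T(x)|^{2p}\rangle \le C(p,\rho)\,\Bigl\langle\Bigl(\sum_{y\in\Z^d}(\osc_{a(y)}\phi_T(x))^2\Bigr)^p\Bigr\rangle.
\]
For each $y$, a standard perturbation argument --- varying $a(y)\to\tilde a(y)$, the difference $w=\phi_T-\tilde\phi_T$ satisfies $\frac{1}{T}w+\nabla^*(a\nabla w)=-\nabla^*\bigl((a-\tilde a)(\nabla\tilde\phi_T+\xi)\bigr)$ with right-hand side supported at $y$ --- together with the Green's function representation gives
\[
\osc_{a(y)}\phi_T(x) \lesssim \mathcal G(y)\,\mathcal F(y),
\]
where $\mathcal G(y):=\sup_{\tilde a}|\nabla_y G_T(y,x;\tilde a)|$ and $\mathcal F(y):=\sup_{\tilde a}|\nabla\phi_T(y;\tilde a)+\xi|$, with suprema taken over $\tilde a$ agreeing with $a$ off $y$. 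The key point is that $\mathcal F$ is stationary in $y$.

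To decouple $\mathcal G$ from $\mathcal F$, I would view $\mathcal G(y)^2\,dy$ as a (random) discrete measure and apply Jensen's inequality to $u\mapsto u^p$:
\[
\Bigl(\sum_y \mathcal G^2\mathcal F^2\Bigr)^p \le \Bigl(\sum_y\mathcal G^2\Bigr)^{p-1}\sum_y\mathcal G^2\mathcal F^{2p}.
\]
Taking expectation and applying H\"older in $\omega$ (with some $s>1$ very close to $1$), then H\"older in the $y$-sum combined with the stationarity of $\mathcal F$, would produce an upper bound of the form
\[
\langle|\phi_T(x)|^{2p}\rangle \lesssim \Bigl\langle\Bigl(\sum_y\mathcal G^2\Bigr)^{s(p-1)}\Bigr\rangle^{\!\!1/s}\Bigl(\sum_{y\in\Z^d}\langle\mathcal G(y)^{2\alpha}\rangle^{1/\alpha}\Bigr)^{\!\!1/s'}\langle\mathcal F(0)^{2p\beta}\rangle^{1/\beta},
\]
for exponents $\alpha,\beta>1$ close to $1$. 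The first factor is handled by the deterministic energy identity $\sum_y|\nabla G_T(y,x;a)|^2\le \lambda^{-1}G_T(x,x;a)$ together with stochastic moment bounds on $G_T(x,x)$, which yield $O((\log T)^{p-1})$ for $d=2$ and $O(1)$ for $d>2$. The second factor --- the annealed weighted $\ell^{2\alpha}$ sum of $\nabla G_T$ --- should scale the same way, giving $O(\log T)$ in $d=2$. The role of the Meyers-type threshold $p_0=p_0(d,\lambda)$ is to ensure that $\beta$ can be chosen close enough to $1$ so that $\langle\mathcal F(0)^{2p\beta}\rangle^{1/\beta}\lesssim\langle|\nabla\phi_T(0)+\xi|^{2p}\rangle$, which closes the estimate.

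The main obstacle is the annealed Green's function bound $\sum_y\langle\mathcal G(y)^{2\alpha}\rangle^{1/\alpha}\lesssim\log T$ (in $d=2$) or $\lesssim 1$ (in $d>2$). For diagonal (scalar) coefficients one would obtain such bounds from Nash--Aronson-type pointwise estimates, which rely on the maximum principle and are unavailable in our non-symmetric setting. Instead, I would perturb off the constant-coefficient Green's function and control the correction via a Helmholtz projection onto curl-free fields (in the spirit of~\cite{Conlon-Spencer-11}); in the critical dimension $d=2$ the argument additionally requires the discrete weighted Calder\'on--Zygmund estimate stated and proved in Section~\ref{S:CZ}. Balancing the integrability losses in the successive H\"older inequalities against the Meyers threshold $p_0$ is the other delicate point.
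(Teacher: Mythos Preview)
Your overall architecture is right and matches the paper: apply the $L^{2p}$-spectral gap to $\phi_T$ (using $\langle\phi_T\rangle=0$), bound $\osc_{a(y)}\phi_T(x)$ by $|\nabla_y G_T(x,y)|\,|\nabla\phi_T(y)+\xi|$, then decouple the Green's function factor from the corrector gradient by H\"older and invoke the Helmholtz/Calder\'on--Zygmund machinery of Section~\ref{S:CZ} in $d=2$. But there is a genuine gap in how you carry out the decoupling.

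The paper does \emph{not} use annealed Green's function bounds or any H\"older inequality in probability. Instead, Lemma~\ref{L2} furnishes a \emph{deterministic}, uniform-in-$a$ weighted estimate
\[
\sup_{a\in\Omega}\sum_{y}|\nabla_y G_T(a;y,0)|^{2q}\omega_q(y)\le C(d,\lambda)\times
\begin{cases}\log T&d=2,\\1&d>2,\end{cases}
\]
for all $1\le q\le q_0(d,\lambda)$. One then applies H\"older in $y$ only, with dual exponents $(q,p)$ and weight $\omega_q$, pulls the supremum over $a$ out \emph{before} taking expectation, and uses stationarity of $\nabla\phi_T+\xi$ to get exactly $\langle|\nabla\phi_T(0)+\xi|^{2p}\rangle$ on the right, with $\sum_y\omega_q(y)^{-1/(q-1)}$ contributing the remaining factor of $\log T$ (respectively $1$). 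The threshold $p_0$ is simply the H\"older dual of $q_0$; nothing ``Meyers-type'' is involved.

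Your route, by contrast, splits a H\"older in $\omega$ and ends with $\langle\mathcal F(0)^{2p\beta}\rangle^{1/\beta}$ on the right for some $\beta>1$. The claimed inequality $\langle\mathcal F(0)^{2p\beta}\rangle^{1/\beta}\lesssim\langle|\nabla\phi_T(0)+\xi|^{2p}\rangle$ is a \emph{reverse} Jensen inequality and is false in general; a Meyers exponent does not rescue it (Meyers gives spatial higher integrability, not a reverse moment inequality in probability). Likewise, your appeal to ``stochastic moment bounds on $G_T(x,x)$'' in $d=2$ is unnecessary and, in the non-symmetric setting without a maximum principle, would itself require justification. Two smaller points: the oscillation bound (Lemma~\ref{L:RP}) already returns the \emph{given} coefficient field $a$ on the right, so the suprema defining your $\mathcal G$ and $\mathcal F$ are not needed; and what you call an ``annealed'' Green's function bound is in fact established in the paper as a quenched (pointwise-in-$a$) weighted estimate, which is precisely what makes the argument close without any loss of exponent.
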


By letting $T\uparrow\infty$, we obtain the following estimate for
  the (unmodified)  corrector.

\begin{corollary}
  \label{cor:1}
  Assume that $\langle \cdot \rangle$ is stationary and
  satisfies LSI~\eqref{eq:LSI} with constant $\rho>0$. Then:
  \begin{enumerate}[(a)]
  \item[(a)] In dimensions $d\geq 2$ there
    exists a unique measurable function
    $\varphi:\Omega\times\Z^d\to\R$ that solves \eqref{eq:corr}  for
    $\expec{\cdot}$-almost every $a\in\Omega$ and
    \begin{itemize}
    \item[(a1)] $\varphi$ satisfies the anchoring condition $\varphi(a,0)=0$ for $\expec{\cdot}$-almost every $a\in\Omega$,
    \item[(a2)] $\nabla\varphi$ is stationary in the sense 
of~\eqref{LMC:1a} and $\expec{\nabla\varphi(x)}=0$ for all $x\in\Z^d$,
    \item[(a3)] $\expec{|\nabla\varphi(x)|^p}<\infty$ for all $x\in\Z^d$ and $p<\infty$. 
    \end{itemize}
  \item[(b)] In dimensions $d>2$ there
    exists a unique measurable function $\phi:\Omega\times\Z^d\to\R$ that solves
    \eqref{eq:corr}  for $\expec{\cdot}$-almost every $a\in\Omega$,
    and
    \begin{itemize}
    \item[(b1)] $\phi$  is stationary in the sense of~\eqref{LMC:1a},
    \item[(b2)] $\expec{|\phi(x)|^p}<\infty$ for all $x\in\Z^d$ and $p<\infty$. 
    \end{itemize}

  \end{enumerate}
\end{corollary}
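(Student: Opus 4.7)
The plan is to obtain $\varphi$ and $\phi$ as weak-$L^{2p}(\Omega)$ subsequential limits of $\nabla\phi_T$ and $\phi_T$ as $T\uparrow\infty$, using the uniform-in-$T$ moment bounds of Theorems~\ref{T1} and~\ref{T2}. Since $\phi_T(\cdot,x)$ is stationary by Lemma~\ref{LMC}, so is $\nabla\phi_T(\cdot,x)$; Theorem~\ref{T1} bounds it in every $L^{2p}(\Omega)$ uniformly in $T\geq 2$. By Banach--Alaoglu together with a diagonal extraction over $x\in\Z^d$ and $p\in\N$, I would select a subsequence $T_n\to\infty$ along which $\nabla\phi_{T_n}(\cdot,x)\rightharpoonup F(\cdot,x)$ weakly in every $L^{2p}(\Omega)$ for each $x$. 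Stationarity, the identity $\expec{\nabla\phi_T(x)}=0$ (a direct consequence of the stationarity of $\phi_T$), and the discrete curl-free identity $\nabla_i\nabla_j\phi_T=\nabla_j\nabla_i\phi_T$ all pass to the weak limit, so $F$ is a stationary, mean-zero, curl-free random vector field with $F(\cdot,x)\in L^{2p}(\Omega)$ for every $p$.

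For part~(a), I would define $\varphi(a,x)$ by summing the components of $F(a,\cdot)$ along any discrete path from $0$ to $x$; curl-freeness makes the value path-independent, which yields $\varphi(a,0)=0$, $\nabla\varphi=F$, and (a3) since $\varphi(\cdot,x)$ is then a finite sum of $L^{2p}$-random variables. To obtain \eqref{eq:corr}, I would rewrite \eqref{eq:1} as $\tfrac{1}{T}\phi_T+\nabla^*(a(\nabla\phi_T+\xi))=0$ and pass to the limit along $T_n$: the second term converges weakly to $\nabla^*(a(\nabla\varphi+\xi))$, while Theorem~\ref{T2} (applied with any fixed $p\geq p_0$) bounds $\expec{|\phi_T(x)|^{2p}}$ by $C(\log T)^p$ in $d=2$ and by $C$ in $d>2$, so $\tfrac{1}{T}\phi_T(x)\to 0$ in $L^{2p}(\Omega)$ in either dimension. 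Part~(b) is then analogous: in $d>2$, the uniform bound of Theorem~\ref{T2} on $\phi_T(\cdot,x)$ itself allows a further extraction of a weak limit $\phi$ that inherits stationarity from $\phi_T$, satisfies $\nabla\phi=F=\nabla\varphi$, and solves \eqref{eq:corr}. Taking $\expec{\cdot}$ of \eqref{eq:1} shows $\expec{\phi_T}=0$, which is preserved in the limit.

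The step I expect to be the main obstacle is uniqueness, since the correctors are not in $\ell^2(\Z^d)$ and a naive energy identity is unavailable. For~(a), given two candidates $\varphi^1,\varphi^2$, the difference $w=\varphi^1-\varphi^2$ has stationary, mean-zero gradient, vanishes at the origin, and solves $\nabla^*(a\nabla w)=0$. Viewing $\nabla w$ as an element of the Hilbert space of $L^2$-integrable, stationary, mean-zero, curl-free random vector fields and using the equation as an orthogonality relation against stationary gradients (stationarity of $\nabla w$ and of $a\nabla w$ replacing an integration-by-parts at infinity, rigorously justified via averaging $\sum_{Q_N}w\,\nabla^*(a\nabla w)$ over a cube $Q_N$, dividing by $|Q_N|$, and controlling the $O(N^{d-1})$ boundary term by the $L^{2p}$-bound on $\nabla\varphi^i$ supplied by Theorem~\ref{T1} together with the ergodic theorem), one obtains $\expec{\nabla w\cdot a\nabla w}=0$. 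The uniform ellipticity~\eqref{ass:ell}, which bounds $v\cdot a v$ below by $\lambda|v|^2$, forces $\nabla w=0$, and the anchoring $w(\cdot,0)=0$ closes~(a). For~(b), the same identity applied to the stationary difference of two stationary solutions gives $\nabla w=0$, whence $w$ is $\tau$-invariant and thus $\expec{\cdot}$-a.s.\ constant by ergodicity (implied by SG), the constant being pinned down by $\expec{\phi}=0$.
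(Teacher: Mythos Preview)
The paper does not give a self-contained proof of Corollary~\ref{cor:1}; the remark following it explains that existence and uniqueness in~(a) under (a1)--(a2) alone are classical (Papanicolaou--Varadhan, Kozlov), while (a3) and part~(b) are obtained from Theorems~\ref{T1} and~\ref{T2} by letting $T\uparrow\infty$. Your weak-$L^{2p}$ compactness construction is precisely this limit passage spelled out, and it is correct.

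Two small points. First, your justification of (a3) is oddly phrased: (a3) concerns $\nabla\varphi=F$, not $\varphi$, and follows immediately from weak lower semicontinuity of the $L^{2p}(\Omega)$-norm together with Theorem~\ref{T1}. Second, to kill the term $\tfrac{1}{T}\phi_T$ you do not need Theorem~\ref{T2}; the energy identity in Step~1 of the proof of Theorem~\ref{T1} already yields $\tfrac{1}{T}\expec{|\phi_T(0)|^2}\leq C$, hence $\tfrac{1}{T}\phi_T\to 0$ in $L^2(\Omega)$.

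Your uniqueness argument for~(a) via spatial averaging over $Q_N$ is the one place that is not yet watertight. The boundary term is of the form $\sum_{\partial Q_N}w\,(a\nabla w)$, and the crude bound $\expec{|w(x)|^2}^{1/2}\leq C|x|$ (triangle inequality along a path) only gives $\expec{|B_N|}=O(N^d)$, so dividing by $|Q_N|$ does not obviously produce $o(1)$; you really need sublinearity of $w$, uniform over $\partial Q_N$ and with enough integrability to pass the expectation, which takes some care. The route the paper alludes to is cleaner and avoids this: under ergodicity (implied by LSI/SG), the space of stationary, mean-zero, curl-free vector fields in $L^2(\Omega)^d$ coincides with the $L^2$-closure of $\{D\zeta:\zeta\in C_b(\Omega)\}$, where $D_i\zeta(a)=\zeta(\tau_{e_i}a)-\zeta(a)$. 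Approximating $\nabla w$ by $D\zeta_n$ and using the probabilistic integration by parts gives $\expec{\nabla w\cdot a\nabla w}=\lim_n\expec{\zeta_n\,D^*(a\nabla w)}=0$ directly, with no spatial cutoff. Your argument for uniqueness in~(b), where $w$ itself is stationary so that the integration by parts is exactly that of \eqref{T1.3}, is fine as stated.
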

\begin{remark}
  \label{sec:labelt2-assume-that}
  \begin{itemize}
  \item The ``anchored corrector'' $\varphi$ defined in  Corollary~\ref{cor:1} (a) has already been considered in the
    seminal works by Papanicolaou and Varadhan
    \cite{Papanicolaou-Varadhan-79} and Kozlov \cite{Kozlov-79}. In
    fact, for existence and uniqueness -- which can be proved by soft
    arguments -- only (a1) and
    (a2) are required. The new estimate
    (a3) follows from Theorem~\ref{T1} in the
    limit $T\uparrow\infty$. Note that (a3) implies (by a short ergodicity argument) sublinearity
    of the anchored corrector in the sense that
    \begin{equation*}
      \lim\limits_{R\uparrow\infty}\max_{|x|\leq R}\frac{|\varphi(a,x)|}{R}=0
    \end{equation*}
    for $\expec{\cdot}$-almost every $a\in\Omega$.
    \item Existence, uniqueness and moment bounds of the ``stationary corrector''
      $\phi$ defined in  Corollary~\ref{cor:1} (b) have been obtained in the
      case of diagonal coefficients in \cite{GO1}, see also
      \cite{GNO1}. Note that the anchored corrector $\varphi$ can be obtained
      from $\phi$ via $\varphi(x,a):=\phi(a,x)-\phi(a,0)$, and, as explained in the discussion below \cite[Corollary~1]{LNO1}, the moment bound (b2) implies that 
    \begin{equation*}
      \forall\theta\in (0,1]\,:\qquad \lim\limits_{R\uparrow\infty}\max_{|x|\leq R}\frac{|\varphi(a,x)|}{R^\theta}=0
    \end{equation*}
    for $\expec{\cdot}$-almost every $a\in\Omega$.

  \end{itemize}
\end{remark}

\begin{remark}
  Instead of the modified corrector, one might consider the periodic corrector which in the stochastic context is defined as follows: For
$L\in\N$ let
  \begin{equation*}
    \Omega_L:=\{\,a\in\Omega\,:\,a(\cdot+Lz)=a\;\;\text{for all }z\in\Z^d\,\}
  \end{equation*}
  denote the set of $L$-periodic coefficient fields. In the $L$-periodic case, one considers the corrector equation \eqref{eq:corr}
together with an $L$-periodic ensemble, i.~e. a stationary probability measure on $\Omega_L$. In that case, equation \eqref{eq:corr}
admits a unique solution $\phi_L$ with $\sum_{x\in([0,L)\cap\Z)^d}\phi_L(x)=0$ for all $a\in\Omega_L$. The $L$-periodic versions of LSI and
SG are obtained by replacing the sum $\sum_{x\in\Z^d}$ in \eqref{eq:LSI} and \eqref{eq:SG} by $\sum_{x\in([0,L)\cap\Z)^d}$. With
these modifications, Theorem~\ref{T1} and Theorem~\ref{T2} extend to the $L$-periodic case ({with $L=\sqrt{T}$ since the cut-off term involving $T$ effectively restricts the equation to a domain of side length $\sqrt{T}$}). In particular, if
the $L$-periodic ensemble satisfies an $L$-periodic LSI with constant $\rho>0$, then the $L$-periodic corrector satisfies for all
$p<\infty$ 
  \begin{equation*}
    \expec{\phi_L^{2p}}^{\frac{1}{2p}}\lesssim
    \begin{cases}
      (\log L)^{\frac{1}{2}}&\text{for }d=2,\\
      1&\text{otherwise.}
    \end{cases}
  \end{equation*}
  The proof follows along the same lines and can easily be adapted.
  For estimates on the periodic corrector $\phi_L$ in the case of diagonal
  coefficients, see \cite{GNO1}.
\end{remark}

\subsection{Sketch of proof of Theorem~\ref{T1}}\label{SS:sketch1}
Theorem~\ref{T1} is relatively straight-forward to prove. We simply
follow the approach developed in~\cite{MO1} and use the
LSI~\eqref{eq:LSI} of Definition~\ref{def:LSI} to upgrade a lower
order $L^2_{\langle \cdot \rangle}(\Omega)$-bound to a bound in
$L^{2p}_{\langle \cdot \rangle}(\Omega)$. Note that by stationarity of
$\expec{\cdot}$ and $\phi_T$, see \eqref{LMC:1a}, it suffices to prove the estimates~\eqref{eq:Dphi} at $x=0$. The lower order bound
\[
 \langle |\nabla\phi_T(0) + \xi|^{2} \rangle \le C(d,\lambda) |\xi|^{2},\qquad\text{cf.~\eqref{T1.3},}
\]
follows from a simple energy argument, i.e.\ an $L^2$-estimate obtained by testing the equation for $\phi_T$ with $\phi_T$ itself. The integral here is the ensemble average and not the sum over $\Z^d$; this is possible thanks to stationarity of $\phi_T$. For details, we refer to Step~1 in the proof of Theorem~\ref{T1}. This bound is then upgraded via the following consequence of LSI~\eqref{eq:LSI}:
\[
 \langle |\nabla \phi_T(0) + \xi|^{2p} \rangle \le C(d,p,\rho,\delta) \langle |\nabla \phi_T(0) + \xi|^{2} \rangle^p + \delta \expec{ \bigg( \sum_{x\in\Z^d} \Big| \osc_{a(x)} \nabla\phi(0) \Big|^2 \bigg)}
\]
for all $\delta > 0$, where we have implicitly taken the oscillation
of the vector $\nabla \phi_T$ component-wise. This reverse Jensen
inequality is the content of Lemma~\ref{L1} below. Next, we need an
expression for $\osc_{a(x)} \nabla \phi_T$. In Lemma~\ref{L:RP} we
will show that the response to a variation at $x$ in the coefficient field is given via the Green's function $G_T$ as:
\[
 \osc_{a(x)}(\nabla_j \phi_T(a;0) + \xi_j) \leq C(d,\lambda) |\nabla \nabla G_T(a;0,x)| |\nabla \phi_T(a;x) + \xi|,
\]
where $G_T$ is the Green's function associated to~\eqref{eq:1}, see Definition~\ref{def:1}. 
Throughout this work, $\nabla\nabla G_T(x,y) = \nabla_x\nabla_y G_T(x,y)\in\R^{d\times d}$ denotes the mixed derivative and we use the spectral norm on $\R^{d\times d}$.
The above estimate on the oscillation then yields
\begin{align*}
 \expec{ \bigg( \sum_{x\in\Z^d} \Big| \osc_{a(x)} \nabla\phi(0) \Big|^2 \bigg)} &\le C(d,\lambda,p) \expec{ \bigg( \sum_{x\in\Z^d} |\nabla \nabla G_T(a;0,x)|^2 |\nabla \phi_T(a;x) + \xi|^2 \bigg)}\\
 &\le C(d,\lambda,p) \expec{ |\nabla \phi_T(a;0) + \xi|^2 },
\end{align*}
where in Step~2 of the proof of Theorem~\ref{T1}, we will obtain the last inequality from stationarity and the energy estimate~\eqref{P1.18}, i.e.\
\[
 \sum_{x\in\Z^d} |\nabla\nabla G_T(x,y)|^2\le C(d,\lambda),
\]
which holds in any dimension $d\ge 2$.
\subsection{Sketch of proof of Theorem~\ref{T2}}\label{SS:sketch2}
By stationarity of $\expec{\cdot}$ and $\phi_T$, it suffices to prove \eqref{eq:phi} at $x=0$. 
In contrast to Theorem~\ref{T1}, the proof of Theorem~\ref{T2} only requires the weaker ergodicity assumption SG of Definition~\ref{def:SG}, which we will use in form of
\[
\expec{|\phi_T(0)|^{2p}}\le C(p,\rho) \expec{\bigg(\sum_{x\in\Z^d} \Big( \osc_{a(x)} \phi_T(0) \Big)^2 \bigg)^p},
\]
see Lemma~\ref{LSGp} below. Again, we require an estimate on the oscillation, which we shall obtain in Lemma~\ref{L:RP} and which yields
\[
 \osc_{a(x)} \phi_T(a;0) \le C(d,\lambda) |\nabla_x G_T(a;0,x)| |\nabla \phi_T(a;x) + \xi|.
\]
This will be substituted into the above SG-type inequality.
In contrast to the proof of Theorem~\ref{T1}, where a simple $\ell^2$-estimate of $\nabla\nabla G_T$ sufficed, we will see that we require a bound on $\nabla G_T$ including weights: In Lemma~\ref{L2}, we show that 
\[
    \sum_{x\in\Z^d}|\nabla_x G_T(a;0,x)|^{2q}\omega_q(x)
    \le C(d,\lambda,q)
    \begin{cases}
      \log T&\text{for }d=2,\\
      1&\text{for }d>2
    \end{cases}
\] 
for all $q\ge1$ close enough to $1$, and weight $\omega_q$ given by
\[
  \omega_q(x):=
  \begin{cases}
    (|x|+1)^{2(q-1)}+T^{1-q}(|x|+1)^{4(q-1)}&\text{for }d=2,\\
    (|x|+1)^{2d(q-1)}&\text{for }d>2.
  \end{cases}
\]
The case $d>2$ is relatively straight-forward and follows by testing the equation with weights and applying Hardy's inequality.
The case $d=2$ is critical for this estimate and we will prove it by reducing the problem via a perturbation argument to the constant-coefficient case; this approach involves a Helmholtz projection and is inspired by the work~\cite{Conlon-Spencer-11}. To make it rigorous, we require a Calder\'{o}n-Zygmund estimate in discrete weighted spaces which may be of independent interest and which is proved in Section~\ref{S:CZ}. With this estimate at hand, we may smuggle in the weight $\omega_q$ and apply H\"older's inequality with $q\approx 1$ and large dual exponent $p$ to obtain 
\begin{multline*}
\expec{ \bigg( \sum_{x\in\Z^d} |\nabla_x G_T(a;0,x)|^2 |\nabla \phi_T(a;x) + \xi|^2 \bigg)}\\ \le C(d,\lambda,q) \expec{ |\nabla \phi_T(a;x) + \xi|^{2p} }
    \begin{cases}
      \log T&\text{for }d=2,\\
      1&\text{for }d>2
    \end{cases}
\end{multline*}
as long as $p$ is large enough such that $\sum_x \omega_q^{1-p}(x) < \infty$.

\section{Auxiliary results and proofs}\label{S:P}
In this section we first present and prove some auxiliary results and
then turn to the actual proofs of our main results. We start in
Section~\ref{SS:W} with the
definition of the modified corrector and prove its existence and some
continuity properties. This invokes the elliptic Green's function,
which we introduce in the same section. Section~\ref{SS:G} and
Section~\ref{SS:E} contain the two key ingredients of our approach: In Section~\ref{SS:G}, we prove estimates on the oscillation
of the corrector and estimates on the gradient of the Green's
function; in
Section~\ref{SS:E}, we revisit LSI and SG, which quantify ergodicity
and are the only ingredients from probability theory in our approach.
Finally in
Sections~\ref{SS:T1} and \ref{SS:T2}, we present the proofs of
Theorems~\ref{T1} and \ref{T2}.
\subsection{Well-posedness of the modified corrector}\label{SS:W}
We define the modified
  corrector $\phi_T:\Omega\times\Z^d\to\R$ as the unique
  bounded solution to \eqref{eq:1}, i.e.\ for each
  $a\in\Omega$, we require
  $\phi_T(a,\cdot):\Z^d\to\R$ to solve \eqref{eq:1} and to be
  bounded, see Lemma~\ref{LMC} for details. Note that this definition is {\em
    pointwise in $a\in\Omega$} and does not invoke any
  probability measure on $\Omega$. This is in contrast to what is typically done in stochastic
  homogenization (e.g.\ in the seminal work
  \cite{Papanicolaou-Varadhan-79}, where $\phi_T$ is unambigously defined through
  an equation on the probability space $L^2_{\expec{\cdot}}(\Omega)$).
  We opt for the ``non-probabilistic'' definition, since
  later we need to estimate the oscillation in $a$ of $\phi_T$, which is
  most conveniently done when $\phi_T$ is defined for {\em all}
  $a\in\Omega$ and not only $\expec{\cdot}$-almost surely.

  However, since the right-hand
  side of~\eqref{eq:1} is only in $\ell^\infty(\Z^d)$, it is not clear
  a-priori whether \eqref{eq:1} admits a bounded solution. To settle this question we
  consider the elliptic Green's function
  $G_T:\Omega\times\Z^d\times\Z^d\to \R$ and prove integrability of $G_T$ in
  Lemma~\ref{L:Gint} below. The latter then implies existence of $\phi_T$
  together with some continuity properties, 
  see Lemma~\ref{LMC} below.

\begin{definition}[Green's function]
  \label{def:1}
   {Given $a\in\Omega$ and $y\in\Z^d$, the Green's function $G_T(a;x,y)$ associated to equation~\eqref{eq:1} is the
  unique solution in  $\ell^2(\Z^d)$ to
  \begin{equation}\label{T1.2}
  \frac{1}{T}G_T(a;\cdot,y)+\nabla^*(a\nabla G_T(a;\cdot,y))=\delta(\cdot-y)\qquad\text{in }\Z^d,
\end{equation}
where $\delta:\Z^d\to\{0,1\}$ denotes the Dirac function centered at $0$. }
\end{definition}
Equation~\eqref{T1.2} can also be expressed in its ``weak" formulation: For all $w\in\ell^2(\Z^d)$ we have that
\begin{equation}\label{P1.3}
  \frac{1}{T}\sum_{x\in\Z^d}G_T(a;x,y)
  w(x)+\sum_{x\in\Z^d}\nabla w(x)\cdot a(x)\nabla_x
G_T(a;x,y)=w(y).
\end{equation}
It immediately follows from the unique characterization of $G_T$ through
(\ref{T1.2}) that the Green's function is stationary:
\begin{equation}\label{P1.15}
\nabla\nabla G_T(a,x+z,y+z)=\nabla\nabla G_T(a(\cdot+z),x,y).
\end{equation}
Furthermore it is symmetric in the sense that
\begin{equation}\label{P1.16}
\nabla\nabla G_T(a;y',y)=\nabla\nabla G_T(a^t;y,y'),
\end{equation}
where $a^t$ denotes the transpose of $a$ in $\R^{d\times d}$.
This can be seen from applying (\ref{P1.3})  to $w(x)= G_T(a^t;x,y')$, yielding
the representation \[G_T(a^t;y,y')=\frac{1}{T}\sum_{x}
G_T(a^t;x,y') G_T(a;x,y)+ \sum_{x}\nabla_x G_T(a^t;x,y') \cdot a(x)\nabla_x G_T(a;x,y).\] On the other hand, choosing $w(x) = G_T(a;x,y)$
in the definition for $G_T(a^t;\cdot,\cdot)$ shows \[G_T(a;y',y)=\frac{1}{T}\sum_{x}
G_T(a;x,y) G_T(a^t;x,y') + \sum_{x}\nabla_x G_T(a;x,y) \cdot a^t(x)\nabla_x G_T(a^t;x,y').\] By definition of the transpose $a^t$, this
shows $G_T(a;y,y') = G_T(a^t;y',y)$ and hence \eqref{P1.16}.

The Green's function is useful since by linearity it encodes all the information for the solution $u$ to the equation
\begin{equation}\label{eq:7}
  \frac{1}{T}u+\nabla^*(a\nabla u)=f\qquad\text{in }\Z^d.
\end{equation}
Indeed, testing \eqref{eq:1} with $G_T(a;\cdot,y)$ and
integrating by parts \textit{formally} yields
\begin{equation}\label{eq:6}
  u(a;x)=\sum_{y\in\Z^d}G_T(a;x,y)f(y).
\end{equation}
 Of course, to make sense of this for $f=\nabla^*(a\xi)\in\ell^\infty(\Z^d)$, we need $G_T$ in $\ell^1(\Z^d)$. On the other hand, the definition of the Green's function only yields $G_T(\cdot,y) \in \ell^2(\Z^d)$ but this is not enough to establish well-posedness of~\eqref{eq:1}. It is not difficult to establish that $\sum_x G_T(x,y) = T$ for all $y\in\Z^d$ and $a\in\Omega$ but without the maximum principle, $G_T$ may be negative and it does not follow that $G_T$ is in $\ell^1(\Z^d)$. Therefore we need another argument to establish well-posedness of~\eqref{eq:1}. This is provided by the following lemma, which shows exponential decay of $G_T$ and in particular that $G_T$ is in $\ell^1(\Z^d)$.
\begin{lemma}\label{L:Gint}
  {There exist a large constant $C=C(d,\lambda,T)<\infty$ and a small constant $\delta=\delta(d,\lambda,T)>0$, both only depending on $d$, $\lambda$
  and $T$, such that
  \begin{equation*}
    \sum_{x\in\Z^d}\Big(|G_T(a;x,y)|^2+|\nabla_x G_T(a;x,y)|^2\Big)e^{\delta(d,\lambda,T)|x-y|}\leq C(d,\lambda,T)
  \end{equation*}
  for all $a\in\Omega$ and $y\in\Z^d$.}
\end{lemma}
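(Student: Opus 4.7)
The plan is to apply the Combes--Thomas argument, exploiting the spectral gap of the operator $\mathcal{L}_T := \tfrac{1}{T} + \nabla^*(a\nabla)$ created by the mass term $\tfrac{1}{T}$. The strategy is to test the weak formulation \eqref{P1.3} for $G_T(\cdot,y)$ with an exponentially weighted copy of itself, $w := \eta_R^2\, G_T(\cdot,y)$, where $\eta_R(x) := \min\{e^{\delta|x-y|}, R\}$ is truncated at height $R\geq 1$. The truncation ensures $w\in\ell^2(\Z^d)$ (since $G_T(\cdot,y)\in\ell^2$ by Definition~\ref{def:1}) and is released only after the uniform-in-$R$ estimate is in hand, via monotone convergence as $R\uparrow\infty$.

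First, I would record the two pointwise facts about the weight that drive the argument: for $\delta\leq 1$ and a universal constant $C$,
\[
|\eta_R(x+e_i)-\eta_R(x)|\leq (e^\delta-1)\eta_R(x)\leq C\delta\,\eta_R(x),\qquad |\nabla_i\eta_R^2(x)|\leq C\delta\,\eta_R^2(x).
\]
Substituting $w=\eta_R^2 G_T$ into \eqref{P1.3} gives
\[
\tfrac{1}{T}\sum_x \eta_R^2 G_T^2 \;+\; \sum_x \nabla(\eta_R^2 G_T)\cdot a\,\nabla G_T \;=\; \eta_R^2(y)\, G_T(y,y) \;=\; G_T(y,y),
\]
and the discrete Leibniz rule expands $\nabla_i(\eta_R^2 G_T)(x) = \eta_R^2(x+e_i)\,\nabla_i G_T(x) + G_T(x)\,\nabla_i\eta_R^2(x)$. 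Using ellipticity $v\cdot a v\geq\lambda|v|^2$, the operator bound $|av|\leq|v|$ from \eqref{ass:ell}, and the inequality $\eta_R^2(x+e_i)\geq (1-C\delta)\eta_R^2(x)$, the quadratic piece is bounded below by $(\lambda - C\delta)\sum_x\eta_R^2|\nabla G_T|^2$; Young's inequality controls the cross term $\sum_x G_T\,\nabla\eta_R^2\cdot a\nabla G_T$ by $\tfrac{\lambda}{4}\sum_x\eta_R^2|\nabla G_T|^2 + C\lambda^{-1}\delta^2\sum_x\eta_R^2 G_T^2$.

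Second, I would fix $\delta = \delta(d,\lambda,T)>0$ small enough that $C\delta\leq\lambda/4$ and $C\lambda^{-1}\delta^2\leq 1/(2T)$ simultaneously; both are achieved by taking $\delta\sim\min\{\lambda,\sqrt{\lambda/T}\}$. After absorbing, the inequality becomes
\[
\tfrac{1}{2T}\sum_x \eta_R^2 G_T^2 \;+\; \tfrac{\lambda}{4}\sum_x \eta_R^2|\nabla G_T|^2 \;\leq\; G_T(y,y).
\]
To close the argument, I would bound the right-hand side a priori using the $\ell^2$ invertibility of $\mathcal{L}_T$: since $\langle\mathcal{L}_T u,u\rangle_{\ell^2}\geq \tfrac{1}{T}\|u\|_{\ell^2}^2$, Lax--Milgram gives $\|\mathcal{L}_T^{-1}\|_{\ell^2\to\ell^2}\leq T$, so $\|G_T(\cdot,y)\|_{\ell^2}\leq T$, and the trivial inclusion $\ell^2(\Z^d)\subset\ell^\infty(\Z^d)$ then yields $|G_T(y,y)|\leq T$. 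Passing $R\uparrow\infty$, monotone convergence delivers the claim, with constants depending only on $d$, $\lambda$, and $T$.

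The main obstacle is purely bookkeeping: the shifted weight $\eta_R(x+e_i)$ that naturally appears in the discrete Leibniz rule must be compared with $\eta_R(x)$ without spoiling the smallness of the cross terms. The Lipschitz-type estimate on $\eta_R$ recorded above handles this uniformly. Importantly, the non-symmetry of $a$ plays no role, since the argument uses only the coercivity $v\cdot av\geq\lambda|v|^2$ and the operator bound $|av|\leq|v|$ from \eqref{ass:ell}; this is consistent with the paper's overall philosophy of avoiding tools specific to the scalar/symmetric case.
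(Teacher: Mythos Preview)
Your proof is correct and follows the same Agmon/Combes--Thomas strategy as the paper: test \eqref{P1.3} with an exponentially weighted copy of $G_T$, expand via the discrete Leibniz rule, and absorb the commutator terms by choosing $\delta$ small. The only real difference is the regularization of the weight. The paper writes $\zeta=\eta\,e^{\delta g}$ with a spatial cutoff $\eta$ and a compactly supported function $g$ approximating $\tfrac{1}{2}|x|$, arranged so that $g\equiv 0$ wherever $\nabla\eta\neq 0$; this design yields the gradient bound $|\nabla_i\zeta|\leq C(\delta\,\zeta+1)$, and the ``$+1$'' produces an additional \emph{unweighted} remainder $\sum_x(|G|^2+|\nabla G|^2)$ on the right-hand side, which is then controlled by the a priori $\ell^2$ estimate. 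Your vertical truncation $\eta_R=\min\{e^{\delta|\cdot-y|},R\}$ is slightly more economical: since $|\nabla_i\eta_R|\leq C\delta\,\eta_R$ holds uniformly (with no additive constant), all error terms carry the weight and are absorbed directly, so the only external input you need is the pointwise bound $G_T(y,y)\leq T$. Both are standard variants of the same method and yield the same result with the same dependence of $\delta$ and $C$ on $d,\lambda,T$.
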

Since we could not find a suitable reference for this estimate in the
discrete, non-symmetric case, we present a proof in the appendix. The proof is \textcolor{blue}{essentially done by testing with $e^{\frac{\delta}{2}|x|}$ (this is also known as  \emph{Agmon's positivity method}~\cite{Agmon})}. In the discrete setting this is inspired by~\cite[Proof of Lemma~3]{Gloria10}.
With this result at hand, we can provide well-posedness of the modified corrector $\phi_T$. In addition to well-posedness, Lemma~\ref{L:Gint} allows us to deduce $\phi_T(0) = \phi_T(a; 0) \in C_b(\Omega)$, which is necessary for the application of LSI~\eqref{eq:LSI} and SG~\eqref{eq:SG} to $\phi_T$.
\begin{lemma}[Modified corrector]
  \label{LMC}
  For all $a\in\Omega$ the modified corrector equation \eqref{eq:1} admits a unique
  bounded solution $\phi_T(a;\cdot)\in\ell^\infty(\Z^d)$. The so
  defined modified corrector $\phi_T:\Omega\times\Z^d\to\R$ satisfies
  $\phi_T(\cdot,x)\in C_b(\Omega)$ for all $x\in\Z^d$, 
  and
  \begin{equation}\label{LMC:2}
    |\phi_T(a;x)|\leq C(T,\lambda,d)|\xi|\qquad\text{for all
    $a\in\Omega$ and all $x\in\Z^d$}.
  \end{equation}
  Furthermore, $\phi_T$ is stationary, i.e.\ 
    \begin{equation}
      \label{LMC:1}
      \phi_T(a;x+z)=\phi_T(a(\cdot+z);x)\qquad\text{for all
    $a\in\Omega$ and all $x,z\in\Z^d$}.
    \end{equation}
\end{lemma}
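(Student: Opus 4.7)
The approach is to define $\phi_T$ explicitly via a Green's function representation and then bootstrap from Lemma~\ref{L:Gint} to obtain all the claimed properties. Since the right-hand side $-\nabla^*(a\xi)\in\ell^\infty(\Z^d)$ is not square-summable, one cannot naively write $\phi_T(x)=\sum_y G_T(a;x,y)(-\nabla^*(a\xi))(y)$; instead I would first integrate by parts formally and take as my definition
$$\phi_T(a;x):=-\sum_{y\in\Z^d}\nabla_y G_T(a;x,y)\cdot a(y)\xi.$$
The uniform exponential decay of $\nabla_y G_T$ provided by Lemma~\ref{L:Gint} guarantees absolute convergence of this sum and immediately yields $|\phi_T(a;x)|\le C(d,\lambda,T)|\xi|$, i.e.~\eqref{LMC:2}. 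To verify that $\phi_T$ solves~\eqref{eq:1}, I would apply the operator $\frac{1}{T}+\nabla^*_x(a(x)\nabla_x\cdot)$ termwise---justified by the exponential decay---and use the defining identity~\eqref{T1.2} to produce pairs of $\delta$-functions in $x-y$ and $x-y-e_j$ which telescope to $-\nabla^*(a\xi)(x)$.

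For uniqueness, let $v\in\ell^\infty(\Z^d)$ satisfy the homogeneous equation $\frac{1}{T}v+\nabla^*(a\nabla v)=0$. My plan is a cross-testing argument: multiply this equation by $G_T(a^t;\cdot,x)$ and sum over $\Z^d$, and separately multiply the defining equation for $G_T(a^t;\cdot,x)$ by $v$ and sum. Discrete integration by parts is legitimate in both cases, since $v,\nabla v\in\ell^\infty$ while $G_T(a^t;\cdot,x)$ and $\nabla_\cdot G_T(a^t;\cdot,x)$ lie in $\ell^1$ by Lemma~\ref{L:Gint} applied to $a^t$. After integration by parts and unfolding the transpose, both computations produce the same expression $\frac{1}{T}\sum_y v(y)G_T(a^t;y,x)+\sum_y a(y)\nabla v(y)\cdot\nabla_y G_T(a^t;y,x)$, which equals $v(x)$ in the second case and $0$ in the first; hence $v\equiv 0$. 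Stationarity~\eqref{LMC:1} then follows immediately from uniqueness, since both $y\mapsto\phi_T(a;y+z)$ and $y\mapsto\phi_T(a(\cdot+z);y)$ are bounded solutions to the modified corrector equation for the shifted coefficient field $a(\cdot+z)$.

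Finally, for the continuity $\phi_T(\cdot,x)\in C_b(\Omega)$, I would establish pointwise continuity of $a\mapsto\nabla_y G_T(a;x,y)$ in the product topology for each fixed pair $(x,y)$, and then invoke dominated convergence in the representation of $\phi_T$ with dominating series $C(d,\lambda,T)|\xi|\sum_y e^{-\delta|x-y|}$ supplied by Lemma~\ref{L:Gint}. Pointwise continuity of $G_T$ would be obtained by approximating $G_T$ by the finite-volume Green's function $G_{T,R}$ on a box $B_R$ with zero Dirichlet boundary conditions: $G_{T,R}$ is the solution of a finite linear system and hence continuous in $a|_{B_R}$, while the uniform-in-$a$ exponential decay of Lemma~\ref{L:Gint} transfers to $G_{T,R}$ and yields uniform convergence $G_{T,R}\to G_T$. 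The main obstacle throughout is the absence of a maximum principle in the non-symmetric discrete setting---$G_T$ may change sign, so $\ell^1$-summability cannot be read off from positivity and the identity $\sum_x G_T(x,y)=T$---and every step above relies crucially on the exponential decay in Lemma~\ref{L:Gint}, proved via Agmon's positivity method, which supplies the integrability needed to make the Green's function representation, the integration-by-parts uniqueness argument, and the dominated-convergence continuity argument all rigorous simultaneously.
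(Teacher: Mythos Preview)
Your existence, uniqueness, bound, and stationarity arguments are essentially identical to the paper's (the paper defines $\phi_T$ via the un-integrated Green's representation $\sum_y G_T(a;x,y)f(y)$ and only afterwards integrates by parts to obtain \eqref{LMC:2}, whereas you take the integrated form as the definition---a cosmetic reordering).

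The one genuine difference is in the continuity step. You propose to first prove $a\mapsto\nabla_y G_T(a;x,y)$ is continuous via a finite-volume approximation $G_{T,R}$, and then pass through the representation formula by dominated convergence. This works, but it introduces an auxiliary object and requires verifying that the Agmon bound transfers to $G_{T,R}$ uniformly in $R$ and that $G_{T,R}\to G_T$ uniformly in $a$. The paper bypasses all of this: it writes $\psi_n:=\phi_T(a_n;\cdot)-\phi_T(a;\cdot)$ as the unique bounded solution to
\[
\tfrac{1}{T}\psi_n+\nabla^*(a_n\nabla\psi_n)=\nabla^*\big((a-a_n)(\nabla\phi_T(a;\cdot)+\xi)\big),
\]
applies the Green's representation for the coefficient field $a_n$, and bounds $|\psi_n(0)|$ by Cauchy--Schwarz against the exponential weight of Lemma~\ref{L:Gint}, obtaining
\[
|\psi_n(0)|\le C(T,\lambda,d)\Big(\sum_y e^{-\delta|y|}|a(y)-a_n(y)|^2\Big)^{1/2},
\]
which tends to zero by dominated convergence. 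The point is that the paper never needs continuity of $G_T$ in $a$: the uniform-in-$a$ bound on $\nabla_y G_T(a_n;0,\cdot)$ from Lemma~\ref{L:Gint} is enough, and the $a$-dependence is pushed entirely onto the factor $a(y)-a_n(y)$, whose convergence is immediate from the product topology. Your route is correct but longer; the paper's difference-equation trick is the cleaner way to close the argument.
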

\begin{proof}
{\bf Step 1}. Existence and uniqueness of $\phi_T$:
In this step, we argue that for arbitrary $f\in\ell^\infty(\Z^d)$
equation \eqref{eq:7} admits a unique solution $u$ and $u$ can be represented as in \eqref{eq:6}. The existence and
uniqueness of $\phi_T$ then follows by setting $f:=-\nabla^*(a\xi)$.
For the argument, note that by Lemma~\ref{L:Gint} we have
$G_T(a;\cdot,y)\in\ell^1(\Z^d)$. Hence, for every
$f\in\ell^\infty(\Z^d)$, equation \eqref{eq:6} defines a function
$u(a;\cdot)\in\ell^\infty(\Z^d)$ that solves \eqref{eq:7}. For the
uniqueness, let $\tilde u\in\ell^\infty(\Z^d)$ solve \eqref{eq:7}.
Testing \eqref{eq:7} with $G_T(a^t;\cdot,x)$ yields
\begin{eqnarray*}
  \sum_{y\in\Z^d}G_T(a^t;y,x)f(y)&=&\sum_{y\in\Z^d}G_T(a^t;y,x)\Big(\frac{1}{T}+\nabla^*(a\nabla))\Big)\tilde u(y)\\
  &=&\sum_{y\in\Z^d}\Big(\frac{1}{T}+\nabla^*(a^t\nabla)\Big)G_T(a^t;y,x)\tilde
  u(y)\\
  &=&\sum_{y\in\Z^d}\delta(x-y)\tilde u(y)=\tilde u(x).
\end{eqnarray*}
By symmetry the left-hand side is equal to $\sum_{y\in\Z^d}G_T(a;x,y)f(y)=u(a;x)$ and thus $u(a;\cdot)=\tilde
u(\cdot)$ follows.
\medskip

{\bf Step 2}. Argument for \eqref{LMC:2} and \eqref{LMC:1}:
The stationarity property \eqref{LMC:1} directly follows from
uniqueness and the
stationarity of the operator and the right-hand side
$-\nabla^*(a\xi)$. We turn to estimate \eqref{LMC:2}. By the Green's representation \eqref{eq:6}, which is valid by  Step~1,
and an integration by parts (possible since $G_T(x,\cdot) \in \ell^1(\Z^d)$), we have
\begin{equation*}
  \phi_T(a;x)=\sum_{y\in\Z^d}\nabla_yG_T(a;x,y)\cdot a(y)\xi.
\end{equation*}
We smuggle in the exponential weight from
Lemma~\ref{L:Gint}, use uniform ellipticity and the Cauchy-Schwarz inequality to get
\begin{align*}
  |\phi_T(a;x)|&\leq\sum_{y\in\Z^d}\left(|\nabla_yG_T(a;x,y)|e^{\frac{\delta}{2}|y|}\right)\left(|a(y)\xi|e^{-\frac{\delta}{2}|y|}\right)\\
  &\leq\left(\sum_{y\in\Z^d}|\nabla_yG_T(a;x,y)|^2e^{\delta|y|}\right)^{\frac{1}{2}}\left(\sum_{y\in\Z^d}e^{-\delta|y|}\right)^{\frac{1}{2}}|\xi|,
\end{align*}
where $\delta>0$ is given in Lemma~\ref{L:Gint}.
By symmetry, cf.~\eqref{P1.16}, and Lemma~\ref{L:Gint}, the
right-hand side is bounded by $C(d,\lambda,T)|\xi|$ and \eqref{LMC:2} follows.

{\bf Step 3}. Argument for $\phi_T(\cdot;x)\in C_b(\Omega)$:
Thanks to \eqref{LMC:2}, we only need to show that $\phi_T(a;x)$ is
continuous in $a$. Furthermore, by stationarity, cf.~\eqref{LMC:1}, it
suffices to consider $\phi_T(a;0)$. Now, consider a sequence $a_n\in\Omega$ that
converges to some $a\in\Omega$ in the product topology. We need to
show that $\phi_T(a_n;0)\to\phi_T(a;0)$. To that end, consider the function
\begin{equation*}
  \psi_n(x):=\phi_T(a_n;x)-\phi_T(a;x),
\end{equation*}
which can be characterized as the unique bounded solution to
\begin{equation*}
  \frac{1}{T}\psi_n+\nabla^*(a_n\nabla\psi_n)=\nabla^*((a-a_n)(\nabla\phi_T(a,\cdot)+\xi))\qquad\text{in }\Z^d.
\end{equation*}
Hence, by Step~1 we have
\begin{equation*}
  \psi_n(0)=\sum_{y\in\Z^d}\nabla_yG_T(a_n;0,y)\cdot (a(y)-a_n(y))(\nabla\phi_T(a,y)+\xi),
\end{equation*}
and thus Lemma~\ref{L:Gint} and the result of Step~2 yield
\begin{align*}
  |\psi_n(0)|&\leq\left(\sup_{y\in\Z^d}\sup_{a\in\Omega}|\nabla\phi_T(a,y)+\xi|\right)\\
  &
  \qquad\times\,\left(\sum_{y\in\Z^d}|\nabla_yG_T(a_n;0,y)|^2e^{\delta|y|}\right)^{\frac{1}{2}}\left(\sum_{y\in\Z^d}e^{-\delta|y|}|a(y)-a_n(y)|^2\right)^{\frac{1}{2}}\\
  &\le C(T,\lambda,d)\,\left(\sum_{y\in\Z^d}e^{-\delta|y|}|a(y)-a_n(y)|^2\right)^{\frac{1}{2}}.
\end{align*}
Since $a_n\to a$ in the product topology, i.e.\ $a_n(y)\to a(y)$ for all $y\in\Z^d$, the right-hand side vanishes as $n\to\infty$ by dominated convergence.
\end{proof}

\subsection{Oscillations and Green's function estimates}\label{SS:G}
In this section, we estimate the oscillation of the
corrector and its gradient, see Lemma~\ref{L:RP} below, and establish estimates
on the gradient of the elliptic Green's functions, see Lemma~\ref{L2} below. These bounds
are at the core of our analysis. Indeed, the proofs of Theorem~\ref{T1} and Theorem~\ref{T2} start with an
application of quantitative ergodicity: In Theorem~\ref{T1}, the LSI~\eqref{eq:LSI} in form of Lemma~\ref{L1} is applied
to $\zeta=\nabla_j\phi_T(0)+\xi_j$, while in Theorem~\ref{T2}, the SG~\eqref{eq:SG} in form of Lemma~\ref{LSGp} is
applied to $\zeta=\phi_T(0)$. Hence we require estimates for
  $\osc_{a(x)}(\nabla_j\phi_T(a;0)+\xi_j)$ and
  $\osc_{a(x)}\phi_T(a;0)$. Following \cite{GO1}, these expressions
are related to the elliptic Green's function:

\begin{lemma}
  \label{L:RP}
  For all $T>0$, $a\in\Omega$, $x\in\Z^d$ and $j=1,\ldots,d$ we have
  \begin{subequations}
    \begin{align}
      \label{eq:osc_phi}
      \osc_{a(x)} \phi_T(a;0) &\leq C(d,\lambda) |\nabla_x G_T(a;0,x)| |\nabla \phi_T(a;x) + \xi|,\\
     \label{T1.1}
      \osc_{a(x)}(\nabla_j \phi_T(a;0) + \xi_j) &\leq C(d,\lambda) |\nabla \nabla G_T(a;0,x)| |\nabla \phi_T(a;x) + \xi|.
    \end{align}
  \end{subequations}
\end{lemma}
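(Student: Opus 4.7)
The plan is to carry out a standard sensitivity (perturbation) analysis: flipping the coefficient field at the single site $x$ produces a discrete dipole right-hand side in the equation for the difference of the two associated correctors, which is then inverted via the Green's function and estimated using uniform ellipticity.

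First I would fix $a\in\Omega$, a site $x\in\Z^d$, and an arbitrary $\tilde a\in\Omega$ agreeing with $a$ off of $x$, and set $u:=\phi_T(\tilde a;\cdot)-\phi_T(a;\cdot)$. Subtracting the modified corrector equations~\eqref{eq:1} for $\phi_T(\tilde a)$ and $\phi_T(a)$, and rearranging so that the operator on the left uses the coefficient field $a$, gives
\[
\frac{1}{T}u+\nabla^*(a\nabla u)=-\nabla^*\bigl((\tilde a-a)(\nabla\phi_T(\tilde a)+\xi)\bigr)\qquad\text{in }\Z^d.
\]
Since $\tilde a-a$ is supported at the single site $x$, the right-hand side is a discrete dipole concentrated at $x$. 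Using the Green's representation formula~\eqref{eq:6} for this source (whose applicability for bounded right-hand sides is provided by Lemma~\ref{LMC}, relying on the $\ell^1$-decay of $G_T$ from Lemma~\ref{L:Gint}) and discretely integrating the $\nabla^*$ by parts onto $G_T$, only the term $y=x$ survives and one obtains the exact identity
\[
u(0)=\nabla_xG_T(a;0,x)\cdot(\tilde a(x)-a(x))\bigl(\nabla\phi_T(\tilde a;x)+\xi\bigr).
\]
Applying $\nabla_j$ in the first argument of $G_T$ (which equals $\nabla_j\phi_T(\tilde a;0)-\nabla_j\phi_T(a;0)$, and to which the deterministic constant $\xi_j$ can be added on both sides without changing the difference) gives the analogous identity with $\nabla_j\nabla_xG_T(a;0,x)$ in place of $\nabla_xG_T(a;0,x)$.

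Finally, uniform ellipticity~\eqref{ass:ell} yields $|\tilde a(x)-a(x)|\leq 2$, and Cauchy-Schwarz on $\R^d$ produces the two pointwise bounds for the specific pair $(a,\tilde a)$. The oscillation is a supremum over such $\tilde a$'s, so taking that supremum yields \eqref{eq:osc_phi} and \eqref{T1.1}. The slightly awkward point is that the natural derivation produces $|\nabla\phi_T(\tilde a;x)+\xi|$ rather than $|\nabla\phi_T(a;x)+\xi|$ on the right; this is harmless because the derivation is symmetric in the roles of $a$ and $\tilde a$, so interchanging them gives the bound in the stated form (and in any event, in the later applications to LSI and SG the distinction disappears after summing over $x$ and averaging).

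The main obstacle, relative to the symmetric-diagonal setting, is that no maximum principle is available to compare $|u|$ to the forcing pointwise. Instead, the argument is purely algebraic/duality-based via the Green's function, and so the real hidden work is in ensuring $G_T\in\ell^1(\Z^d)$ with good enough estimates to legitimize the integration by parts; this is exactly what Lemma~\ref{L:Gint} provides, via the Agmon positivity argument deferred to the appendix. Once $G_T\in\ell^1$ is on hand, the proof of Lemma~\ref{L:RP} is essentially the discrete analogue of the classical perturbation formula for linear elliptic operators.
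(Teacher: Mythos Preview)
Your overall strategy matches the paper's, but the step you flag as ``slightly awkward'' is a genuine gap that symmetry alone does not close. Your representation formula gives
\[
|\phi_T(\tilde a;0)-\phi_T(a;0)|\le 2\,|\nabla_xG_T(a;0,x)|\,|\nabla\phi_T(\tilde a;x)+\xi|,
\]
and interchanging $a\leftrightarrow\tilde a$ produces
\[
|\phi_T(a;0)-\phi_T(\tilde a;0)|\le 2\,|\nabla_xG_T(\tilde a;0,x)|\,|\nabla\phi_T(a;x)+\xi|,
\]
which now carries $G_T(\tilde a;\cdot,\cdot)$ rather than $G_T(a;\cdot,\cdot)$. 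Neither version has both factors evaluated at the base field $a$, yet for the oscillation bound you need a right-hand side that is independent of $\tilde a$. Your fallback claim that the distinction disappears in the applications is also not right: the proofs of Theorems~\ref{T1} and~\ref{T2} use stationarity to shift $\nabla\phi_T(a;x)+\xi$ to the origin, and this requires the right-hand side to be a function of the single field $a$, not of a pair $(a,\tilde a)$ with $\tilde a$ varying site by site.

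The missing ingredient is a bound on $|\nabla\phi_T(\tilde a;x)+\xi|$ in terms of $|\nabla\phi_T(a;x)+\xi|$. The paper obtains it by writing the difference equation with the operator in terms of $\tilde a$ (so that $\nabla\phi_T(a;x)+\xi$ appears on the right), taking $\nabla_j$ and evaluating at $y=x$, and then invoking the pointwise diagonal bound $|\nabla\nabla G_T(\tilde a;x,x)|\le\lambda^{-1}$ from the energy estimate~\eqref{P1.18}. This yields
\[
|\nabla\phi_T(\tilde a;x)-\nabla\phi_T(a;x)|\le 2\lambda^{-1}|\nabla\phi_T(a;x)+\xi|,
\]
and substituting back into your first display converts the $\tilde a$-dependent factor into $C(d,\lambda)|\nabla\phi_T(a;x)+\xi|$. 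Only then is the supremum over $\tilde a$ legitimate, giving~\eqref{eq:osc_phi} and~\eqref{T1.1} as stated.
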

\begin{proof}
Let $a\in\Omega$ and $x\in\Z^d$ be fixed. As in the definition of the
oscillation, let $\tilde a\in\Omega$ denote an arbitrary coefficient
field that differs from $a$ only at $x$, i.e.\ $\tilde a(y)=a(y)$ for all $y\neq x$.
We consider the difference $\phi_T(\tilde a;x)-\phi_T(a;x)$. Equation~\eqref{eq:1} yields
\[
\frac{1}{T} ( \phi_T(\tilde a;\cdot) - \phi_T(a;\cdot) ) + \nabla^* \big(\tilde a(\cdot) ( \nabla \phi_T(\tilde a;\cdot) - \nabla \phi_T(a;\cdot) \big) = \nabla^* \big(( a - \tilde a )(\cdot) (\nabla \phi_T(a;\cdot) + \xi )\big)
\]
and consequently the Green's function representation \eqref{P1.3} yields 
\begin{equation}\label{eq:phi_rep}
 \phi_T(\tilde a; y) - \phi_T(a;y) = \nabla_x G_T(\tilde a;y,x) \cdot ( a(x) - \tilde a(x) ) (\nabla \phi_T(a;x) + \xi )
\end{equation}
for all $y\in\Z^d$. In particular, taking the gradient w.~r.~t.~$y_j$ and then setting $y=x$ yields
\[
 |\nabla_j \phi_T(\tilde a; x) - \nabla_j \phi_T(a;x)| \le 2 |\nabla_{j}\nabla G_T(\tilde a;x,x)| |\nabla \phi_T(a;x) + \xi|
\]
since $a, \tilde a\in\Omega$ are uniformly bounded.

In view of \eqref{P1.18}, the mixed derivative of $G_T$ is bounded by $\lambda^{-1}$ and we obtain
\begin{equation}\label{eq:osc_x_phi_x}
 |\nabla_j \phi_T(\tilde a; x) - \nabla_j \phi_T(a;x)| \le 2 \lambda^{-1} |\nabla \phi_T(a;x) + \xi|.
\end{equation}
Exchanging $a$ and $\tilde a$ in \eqref{eq:phi_rep} yields
\begin{equation}\label{eq:phi_rep2}
 \phi_T(a; y) - \phi_T(\tilde a;y) = \nabla_x G_T(a;y,x) \cdot ( \tilde a(x) - a(x) ) (\nabla \phi_T(\tilde a;x) + \xi ).
\end{equation}
We take the absolute value to obtain
\[
 |\phi_T(a;0) - \phi_T(\tilde a;0)| \le 2 |\nabla_x G_T(a;0,x)| |\nabla \phi_T(\tilde a;x) + \xi|.
\]
On the right hand side, we plug in~\eqref{eq:osc_x_phi_x} to obtain
\[
 |\phi_T(a;0) - \phi_T(\tilde a;0)| \le C(d,\lambda) |\nabla_x G_T(a;0,x)| |\nabla \phi_T(a;x) + \xi|.
\]
Since $\tilde a(x)$ was arbitrary, it follows that
\begin{equation*}
 \osc_{a(x)} \phi_T(a;0) \le C(d,\lambda) |\nabla_x G_T(a;0,x)| |\nabla \phi_T(a;x) + \xi|,
\end{equation*}
which is precisely the claimed identity \eqref{eq:osc_phi}.
Taking the gradient with respect to $y_j$ in~\eqref{eq:phi_rep2} yields
\begin{equation*}
 \nabla_j\phi_T(a; y) - \nabla_j\phi_T(\tilde a;y) = \nabla_{y,j}\nabla_x G_T(a;y,x) \cdot ( \tilde a(x) - a(x) ) (\nabla \phi_T(\tilde a;x) + \xi ).
\end{equation*}
We take the absolute value and insert \eqref{eq:osc_x_phi_x} to obtain
\[
 |\nabla_j\phi_T(a; y) - \nabla_j\phi_T(\tilde a;y)| \le C(d,\lambda) |\nabla_{y,j}\nabla_x G_T(a;y,x)|  |\nabla \phi_T(a;x) + \xi |.
\]
and \eqref{T1.1} follows.
\end{proof}

In view of \eqref{eq:osc_phi} and \eqref{T1.1} it is natural that
integrability properties of $G_T$ are required. Next to quantitative
ergodicity, these Green's function
estimates are the second key ingredient in our approach. For Theorem~\ref{T1},
which invokes \eqref{T1.1}, a standard $\ell^2$-energy estimate
for $\nabla\nabla G_T$ suffices, see \eqref{P1.18}. For Theorem~\ref{T2}, which
invokes \eqref{eq:osc_phi}, some more  regularity of the Green's function  is
required. We need a spatially weighted estimate on the gradient
$\nabla G_T$ that is uniform in $a\in\Omega$. To this end, as announced in Section~\ref{SS:sketch2}, we define a weight
\begin{equation}\label{def_weight}
  \omega_q(x):=
  \begin{cases}
    (|x|+1)^{2(q-1)}+T^{1-q}(|x|+1)^{4(q-1)}&\text{for }d=2,\\
    (|x|+1)^{2d(q-1)}&\text{for }d>2,
  \end{cases}
\end{equation}
for every $q\ge 1$ and $T\ge 1$.
\begin{lemma}\label{L2}
  There exists $q_0>1$ only depending on $\lambda$ and $d$  such that
  \begin{align}\label{P1.18}
  \sum_{x\in\Z^d} |\nabla_x\nabla_{y,j} G_T(x,y)|^2 &\le\lambda^{-2},\quad j=1, \ldots, d,\\
  \sum_{x\in\Z^d}|\nabla_x G_T(a;x,0)|^{2q}\omega_q(x)
    &\le C(d,\lambda)
    \begin{cases}
      \log T&\text{for }d=2,\\
      1&\text{for }d>2
    \end{cases}\label{eq:L2}
  \end{align}
  for all $1\leq q \le q_0$.
\end{lemma}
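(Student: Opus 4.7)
For the $\ell^2$ estimate \eqref{P1.18} on the mixed second derivative, I would differentiate the defining equation \eqref{T1.2} in $y_j$. Setting $v(\cdot):=\nabla_{y,j}G_T(\cdot,y)$ and using the identity $\delta(x-y-e_j)-\delta(x-y)=\nabla^*_{x,j}\delta(x-y)$, the function $v$ solves
\[
\tfrac{1}{T}v+\nabla_x^*(a\nabla_x v)=\nabla^*_{x,j}\delta(\cdot-y)\quad\text{in }\Z^d.
\]
Testing this equation with $v$ itself and using the discrete integration by parts formula on the right-hand side gives $\tfrac{1}{T}\|v\|_{\ell^2}^2+\sum_x\nabla v\cdot a\nabla v=\nabla_{x,j}v(y)$. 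By ellipticity the left-hand side is at least $\lambda\|\nabla v\|_{\ell^2}^2$, whereas the right-hand side, being a single pointwise value of a function, is bounded by $\|\nabla v\|_{\ell^2}$. Rearranging yields $\|\nabla v\|_{\ell^2}\le\lambda^{-1}$, which is \eqref{P1.18}.

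For the weighted estimate \eqref{eq:L2} in $d>2$, I would follow the hint of Section~\ref{SS:sketch2} and test \eqref{T1.2} with $G_T\cdot\eta$ where $\eta=(|x|+1)^\alpha$ for some small $\alpha=\alpha(\lambda,d)>0$. Using the discrete Leibniz rule $\nabla_j(fg)(x)=f(x+e_j)\nabla_jg(x)+g(x)\nabla_jf(x)$, ellipticity, and Young's inequality to absorb the cross term, one is reduced to controlling $\sum_x G_T^2\,(|x|+1)^{\alpha-2}$ by the energy. This is done via the weighted discrete Hardy inequality $\sum_xu^2(|x|+1)^{\alpha-2}\le C\sum_x|\nabla u|^2(|x|+1)^\alpha$, which is valid in $d>2$ for $\alpha$ in a range depending on $d$. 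A uniform bound on $G_T(0)$ follows from the Nash--Sobolev inequality together with the symmetry \eqref{P1.16}. This produces the weighted $\ell^2$ estimate $\sum_x|\nabla G_T|^2(|x|+1)^\alpha\le C$. The passage to $\ell^{2q}$ with the full weight $\omega_q$ is done by a Meyers-type self-improvement of integrability (Caccioppoli on dyadic annuli combined with Gehring's reverse H\"older lemma), which is available for general non-symmetric systems and yields the desired bound for all $1\le q\le q_0=q_0(d,\lambda)>1$.

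The critical case $d=2$ is the main obstacle, since Hardy's inequality fails. Here I would use the perturbation strategy advertised in Section~\ref{SS:sketch2}. Let $G_T^0$ denote the constant-coefficient Green's function solving $\frac{1}{T}G_T^0-\Delta G_T^0=\delta_0$ (with the reference matrix chosen to minimize $\|a-\mathrm{ref}\|$ over $a\in\Omega_0$). The difference $v:=G_T-G_T^0$ satisfies $\tfrac{1}{T}v-\Delta v=-\nabla^*((a-\id)\nabla G_T)$, so inverting the constant-coefficient operator and taking the gradient exhibits $\nabla v$ as minus a discrete Helmholtz-type projector $P_T$ applied to the vector field $(a-\id)\nabla G_T$. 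The weighted discrete Calder\'{o}n--Zygmund estimate proved in Section~\ref{S:CZ}, applied with the Muckenhoupt-type weight $\omega_q$ (which is close to $1$ for $q$ close to $1$), gives
\[
\sum_x|\nabla v|^{2q}\omega_q\le C_q\sum_x|\nabla G_T|^{2q}\omega_q.
\]
Writing $\nabla G_T=\nabla G_T^0+\nabla v$ and choosing $q$ sufficiently close to $1$ so that $C_q\cdot\|a-\id\|_\infty<1$, one absorbs the $\nabla v$ term and reduces to controlling $\sum_x|\nabla G_T^0|^{2q}\omega_q$. This final quantity is estimated by an explicit computation from the Fourier representation of $G_T^0$: one has $|\nabla G_T^0(x)|\lesssim(|x|+1)^{-1}$ for $|x|\lesssim\sqrt{T}$ with exponential decay beyond, so that the sum is comparable to $\int_1^{\sqrt{T}}r^{-1}\,dr\sim\log T$, exactly matching the claim. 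The two pieces of $\omega_q$ in $d=2$ correspond respectively to the algebraic region $|x|\le\sqrt T$ and to the exponential-decay region $|x|\ge\sqrt T$ where the $T^{1-q}$ factor compensates the larger power of $|x|$.

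The hardest part of this plan is ensuring the absorption in the $d=2$ step: the operator norm of the weighted Calder\'{o}n--Zygmund projector $P_T$ composed with multiplication by $(a-\id)$ must be strictly less than $1$, uniformly in $T$. This requires a quantitative weighted Calder\'{o}n--Zygmund theorem in which the operator norm on $\ell^{2q}(\omega_q)$ is continuous as $(q,\omega_q)\to(1,1)$ and converges to the $\ell^2$-norm of $P_T$, which is bounded by $1$. By contrast, the $\ell^2$ estimate on $\nabla\nabla G_T$ is an immediate energy identity, and the $d>2$ case is a fairly routine combination of Hardy, weighted testing, and Meyers.
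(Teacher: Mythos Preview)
Your argument for \eqref{P1.18} is exactly the paper's: differentiate in $y_j$, test with $\nabla_{y,j}G_T$, and use ellipticity.

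For $d>2$ your strategy matches the paper through the weighted $\ell^2$ bound $\sum_x(|x|+1)^{2\alpha}|\nabla G_T|^2\le C(d,\lambda)$ (testing with $(|x|+1)^{2\alpha}G_T$, discrete Hardy, Sobolev for $G_T(0,0)$). But the passage to $\ell^{2q}$ is much simpler than you propose: on $\Z^d$ one has the trivial embedding $\|f\|_{\ell^{2q}}\le\|f\|_{\ell^2}$, so applying this to $f(x)=(|x|+1)^{\alpha_0}|\nabla G_T(x)|$ immediately gives $\sum_x(|x|+1)^{2q\alpha_0}|\nabla G_T|^{2q}\le C$, and one recovers the weight $\omega_q(x)=(|x|+1)^{2d(q-1)}$ for $q\le d/(d-\alpha_0)$. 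No Meyers or Gehring is needed; the discreteness does the work for free. Your route would presumably also succeed, but it is considerably heavier.

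For $d=2$ your perturbation strategy is the right one, but there is a genuine gap in the absorption step. With the reference $\id$ one has $(\id+P_T(a-\id))\nabla G_T=\nabla G_T^0$, and absorption requires $\|P_T\|\cdot\|a-\id\|<1$ on the weighted space. At $q=1$ the projector has $\ell^2$-norm $\le 1$, but $\|a_0-\id\|$ is only bounded by $\sqrt{2(1-\lambda)}$, which exceeds $1$ once $\lambda<\tfrac12$; your parenthetical ``reference matrix chosen to minimize $\|a-\mathrm{ref}\|$'' does not resolve this, because minimizing the perturbation size alone ignores the accompanying change in the projector norm. The paper's fix is to multiply the equation by $\lambda$ first: setting $\overline a:=\lambda a-\id$ and $\mathcal L:=\tfrac{\lambda}{T}+\nabla^*\nabla$, one obtains the exact identity $(\id+\mathcal H\overline a)\nabla G_T=\lambda\nabla G^0$ with $\mathcal H:=\nabla\mathcal L^{-1}\nabla^*$. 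Now the ellipticity assumption \eqref{ass:ell} gives, via $|(\lambda a_0-\id)v|^2=\lambda^2|a_0v|^2-2\lambda v\cdot a_0v+|v|^2\le(1-\lambda)^2|v|^2$, the crucial bound $\|\overline a\|\le 1-\lambda<1$, while $\|\mathcal H\|_{B(\ell^2)}\le 1$ as before. This strict inequality survives the passage to $\ell^{2q}_{\omega_q}$ by the continuity argument you outline (interpolation between the weighted Calder\'on--Zygmund bound of Section~\ref{S:CZ} and the $\ell^2$ bound), and the Neumann series then closes for all $\lambda\in(0,1)$. The remainder of your plan (reduction to $\nabla G_T^0$ and the explicit $\log T$ computation using the two regimes of $\omega_q$) is correct and matches the paper.
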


Lemma~\ref{L2} establishes a weighted $\ell^{2q}$-estimate on the
gradient $\nabla G_T$ of the Green's function. For the application,
it is crucial that the integrability exponent $2q$ is larger than $2$. The weight is chosen in such a way that the estimate
remains valid for the constant coefficient Green's function $G_T^0(x):=G_T(\ones;x,0)$ (where we use the symbol $\ones$ to denote the identity in $\mathbb{R}^{d\times d}$) whose gradient behaves as
\begin{equation}\label{eq:decay-const}
  |\nabla G_T^0(x)| \le C(d) (|x|+1)^{1-d}\exp\Big(-c_0\frac{|x|+1}{\sqrt T}\Big)
\end{equation}
for some generic constant $c_0>0$, {which can easily be deduced from the well-known heat kernel bounds on the gradient of the parabolic Green's function (for lack of a better reference, we refer to~\cite[Theorem~1.1]{Delmotte-Deuschel} in the special case of a measure concentrating on $a(x)=\ones$) along the lines of~\cite[Proposition~3.6]{Mourrat}.}
With this bound at hand, the definition of the weight \eqref{def_weight} yields
\begin{equation}\label{eq:const}
  \sum_{x\in\Z^d}|\nabla G_T^0(x)|^{2q}\omega_q(x)
  \le C(d,q)
  \begin{cases}
    \log T&\text{for }d=2,\\
    1&\text{for }d>2
  \end{cases}
\end{equation}
for all $q>1$. Hence, Lemma~\ref{L2} says that the variable-coefficient Green's function exhibits (on a spatially averaged level) the same
decay properties as the constant-coefficient Green's function. In the diagonal, scalar case, Lemma~\ref{L2} is a consequence of \cite[Lemma~2.9]{GO1} and
can also be derived from the weighted estimates on the parabolic Green's
function in \cite[Theorem~3]{GNO1}. Although the arguments in
\cite{GO1,GNO1} rely on scalar techniques, Lemma~\ref{L2} also holds
in the case of systems. Indeed, our proof relies only on techniques which are also available for systems. 
The proof will be split into three parts: First we will provide a simple argument for~\eqref{P1.18} valid in all dimensions. Then we will prove~\eqref{eq:L2} in $d>2$ dimensions. The hardest part is the proof of~\eqref{eq:L2} if $d=2$ since this is the critical dimension.
\begin{proof}[Proof of~\eqref{P1.18}]
An application of $\nabla_{y,j}$ to \eqref{P1.3} yields the
following characterization for $\nabla_{y,j} G_T(a;\cdot,y)$
\begin{equation*}
  \frac{1}{T}\sum_{x\in\Z^d}\nabla_{y,j}
  G_T(a;x,y) w(x)+\sum_{x\in\Z^d}\nabla w(x) \cdot a(x)\nabla_x\nabla_{y,j}
  G_T(a;x,y)=\nabla_j w(y)
\end{equation*}
for all $w\in\ell^2(\Z^d)$.
Taking $w(\cdot):=\nabla_{y,j} G_T(\cdot,y) \in\ell^2(\Z^d)$ yields 
\[
 \frac{1}{T} \sum_{x\in\Z^d} |\nabla_{y,j} G_T(x,y)|^2 + \sum_{x\in\Z^d} \nabla_x \nabla_{y,j} G_T(x,y) \cdot a(x) \nabla_x \nabla_{y,j} G_T(x,y)
= \nabla_{j} \nabla_{j} G_T(y,y),
\]
where $\nabla_{j} \nabla_{j} G_T(y,y)=\nabla_{x,j} \nabla_{y,j} G_T(x,y)\big|_{x=y}$.
The first term on the l.~h.~s.~is positive and ellipticity yields
\[
 \lambda \sum_{x\in\Z^d} |\nabla_x \nabla_{y,j} G_T(x,y)|^2 \le |\nabla_{j} \nabla_{j} G_T(y,y)| \le \bigg( \sum_{x\in\Z^d} |\nabla_x \nabla_{y,j} G_T(x,y)|^2 \bigg)^\frac{1}{2}.
\]
Thus \eqref{P1.18} follows.
\end{proof}
\begin{proof}[Proof of~\eqref{eq:L2} in $d>2$ dimensions]
{\bf Step 1}. A priori estimate:
We prove
\begin{equation}\label{apriori_L2}
 |G_T(0,0)| + \sum_x |\nabla G_T(x,0)|^2 \le C(d,\lambda).
\end{equation}
The weak form of \eqref{P1.3} with $\zeta = G_T(\cdot,0)$ and ellipticity immediately yield
\[
 0 \le \lambda \sum_x |\nabla G_T(x,0)|^2 \le G_T(0,0),
\]
in particular $G_T(0,0) \ge 0$.
Now a Sobolev embedding in $d>2$ with constant $C(d)$ yields
\begin{align*}
 |G_T(0,0)| &\le \bigg(\sum_x |G_T(x,0)|^{\frac{2d}{d-2}}\bigg)^{\frac{d-2}{2d}}\\ &\le C(d) \bigg( \sum_x |\nabla G_T(x,0)|^2 \bigg)^{\frac{1}{2}} \le C(d,\lambda) |G_T(0,0)|^{\frac{1}{2}}.
\end{align*}
The Sobolev embedding is readily obtained from its continuum version
on $\R^d$ via a linear interpolation function on a triangulation subordinate to the lattice $\Z^d$. Hence $|G_T(0,0)|\le C(d,\lambda)$ and \eqref{apriori_L2} follows.

\medskip

{\bf Step 2}. A bound involving weights: In this step we show that there exists $\alpha_0(d) > 0$ such that
\begin{equation}\label{weight_hardy}
 \sum_x ( |x| + 1 )^{2\alpha-2} |G_T(x,0)|^2 \le C(d) \sum_x (|x|+1)^{2\alpha} |\nabla G_T(x,0)|^2
\end{equation}
for all $0 < \alpha \le \alpha_0$. (Note that both sides are well-defined for $G_T$.)
We start by recalling Hardy's inequality in $\R^d$ if $d>2$:
\[
 \int_{\R^d}\frac{|f|^2}{|x|^2} \;dx \le \Big(\frac{2}{d-2}\Big)^2 \int_{\R^d} |\nabla f|^2 \;dx
\]
for all $f\in H^1(\R^d)$. A discrete counterpart can be derived by interpolation w.~r.~t.~a triangulation subordinate to the lattice and yields
\begin{equation}\label{hardy}
 \sum_x ( |x| + 1 )^{2\alpha-2} |G_T(x,0)|^2  \le C(d) \sum_x \big|\nabla( ( |x| + 1 )^{\alpha} G_T(x,0) ) \big|^2.
\end{equation}
The discrete Leibniz rule $\nabla_i (fg)(x) = f(x+e_i)\nabla_i g(x) + g(x)\nabla_i f(x)$ 
yields
\[
\nabla_i(( |x| + 1 )^{\alpha} G_T(x,0) ) = ( |x+e_i| + 1 )^{\alpha} \nabla_i G_T(x,0) + G_T(x,0) \nabla_i( |x| + 1 )^{\alpha}.
\]
By the mean value theorem we obtain the simple inequality $|a^\alpha - b^\alpha| \le \alpha (a^{\alpha-1} + b^{\alpha-1}) |a-b|$ for all $a,b\ge 0$ {and we trivially have that
\[
 \frac{1}{2} (|x|+1) \le |x+e|+1 \le 2(|x|+1).
\]
The choice $a=|x+e|+1$ and $b=|x|+1$ thus yields
\[
 \nabla_i( |x| + 1 )^{\alpha} \le 3 \alpha (|x|+1)^{\alpha-1}
\]
}for all $0\le\alpha\le1$. Summation over $i=1,\ldots,d$ and the discrete Leibniz rule above consequently yield
\[
\big| \nabla \big( ( |x| + 1 )^{\alpha} G_T(x,0) \big) \big|^2 \le C(d) \Big( ( |x| + 1 )^{2\alpha} |\nabla G_T(x,0)|^2 + \alpha ( |x| + 1 )^{2\alpha-2} |G_T(x,0)|^2 \Big)
\]
for any $0\le\alpha\le1$. We substitute this estimate in Hardy's inequality \eqref{hardy} and take $\alpha=\alpha_0(d)$ small enough to absorb the last term into the l.~h.~s.~to obtain~\eqref{weight_hardy}, i.e.\
\[
 \sum_x (|x|+1)^{2\alpha_0-2} |G_T(x,0)|^2  \le C(d) \sum_x ( |x| + 1 )^{2\alpha_0} |\nabla G_T(x,0)|^2.
\]

\medskip

{\bf Step 3}. Improvement of Step~1 to include weights: Now we deduce
the existence of $\alpha_0 = \alpha_0(d,\lambda) > 0$ (smaller
  than $d$ and possibly smaller than $\alpha_0(d)$ from Step~2) such that
\begin{equation}\label{apriori_L2_alpha}
 \sum_x \big( |x| + 1 \big)^{2\alpha_0} |\nabla G_T(x,0)|^2 \le C(d,\lambda).
\end{equation}
To this end, we set $w(x) = (|x|+1)^{2\alpha} G_T(x,0)$ and
note that
\begin{equation*}
  \nabla_iw(x)=(|x|+1)^{2\alpha}\nabla_iG_T(x,0)+\nabla_i\Big((|x+e_i|+1)^{2\alpha}\Big) G_T(x+e_i,0).
\end{equation*}
Hence, \eqref{P1.3} yields (for $y=0$):
\begin{multline}\label{Green_weights_eq}
 \frac{1}{T} \sum_x (|x|+1)^{2\alpha} |G_T(x,0)|^2 + 
 \sum_x\sum_{i,j=1}^d G_T(x+e_i,0) \nabla_i \big((|x+e_i|+1)^{2\alpha}\big) \cdot a_{ij}(x) \nabla_j G_T(x,0)\\
 + \sum_x( |x| + 1 )^{2\alpha} \nabla G_T(x,0) \cdot a(x) \nabla G_T(x,0) = G_T(0,0).
\end{multline}
As in Step~2, we have that
\[
\big|\nabla_i \big((|x|+1)^{2\alpha}\big)\big| \le 4 \alpha (|x|+1)^{\alpha-1}(|x+e_i|+1)^\alpha.
\]
for all $0 \le \alpha \le 1$ and $i=1,\ldots,d$. Thus~\eqref{Green_weights_eq}, ellipticity, and H\"older's inequality yield 
\begin{multline*}
 \lambda\sum_x \big( |x| + 1 \big)^{2\alpha} |\nabla G_T(x,0)|^2  \le |G_T(0,0)|+\\C(d) \alpha \bigg(\sum_x |G_T(x,0)|^2 (|x|+1)^{2\alpha-2} \bigg)^\frac{1}{2}
 \bigg(\sum_x |\nabla G_T(x,0)|^2 (|x|+1)^{2\alpha} \bigg)^\frac{1}{2}.
\end{multline*}
We apply the result of Step 2 with $\alpha \le \alpha_0(d)$ and then possibly decrease $\alpha$ further to absorb the second term on the r.~h.~s. This is possible for $\alpha\le\alpha_0(d,\lambda)$ for some $\alpha_0(d,\lambda)>0$. By Step 1, we conclude  \eqref{apriori_L2_alpha}. By the discrete $\ell^{2q}-\ell^2$-inequality $\|f\|_{\ell^{2q}(\Z^d)} \le \|f\|_{\ell^{2}(\Z^d)}$, it follows that
\[
 \sum_x \big( |x| + 1 \big)^{2q\alpha_0} |\nabla G_T(x,0)|^{2q} \le C(d,\lambda)
\]
for all $q>1$. {Hence Lemma~\ref{L2} holds for $d>2$ with $\omega_q$ defined in~\eqref{def_weight} as long as $2d(q-1) \le 2q\alpha_0$, i.e.\ we may take $q_0 = \frac{d}{d-\alpha_0}$.}
\end{proof}
\begin{proof}[Proof of~\eqref{eq:L2} in $d=2$ dimensions]
Let us remark that the following proof is valid in all dimensions $d\ge2$. However, if $d>2$, we have the simpler proof above.

Fix $T>0$ and $a\in\Omega$. For convenience, we set
\begin{equation}\label{eq:5}
  G(x):=G_T(a;x,0)\qquad\text{and}\qquad G^0(x):=G_{\frac{T}{\lambda}}(\ones;x,0),
\end{equation}
where $\ones$ denotes the identity in $\R^{d\times d}$ and $\lambda$ denotes the constant of
ellipticity from Assumption~\ref{ass:ell}.
We first introduce some notation. For $1\leq q<\infty$ and $\gamma>0$, we denote by  $\ell^{q}_\gamma$  the space of vector fields $g:\Z^d\to\R^d$
with 
\begin{eqnarray*}
  \|g\|_{\ell^q_\gamma}&:=&\left(\sum_{x\in\Z^d}|g(x)|^q(|x|+1)^{\gamma}\right)^{\frac{1}{q}}<\infty.
\end{eqnarray*}
Likewise we denote by $\ell^{2q}_{{\omega_q}}$ the space of vector fields
with 
\begin{equation*}
  \|g\|_{\ell^{2q}_{\omega_q}}:=\left(\sum_{x\in\Z^d}|g(x)|^{2q}{\omega_q}(x)\right)^{\frac{1}{2q}}<\infty,
\end{equation*}
with ${\omega_q}$ defined by \eqref{def_weight}. We write $\|\mathcal
H\|_{B(X)}$ for the operator norm of a linear operator $\mathcal H: X\to X$ defined on a normed space $X$. 
\smallskip

{\bf Step 1}. Helmholtz decomposition:
We claim that the gradients of the variable coefficient Green's function
$G$ and of the constant coefficient Green's function $G^0$ from \eqref{eq:5} are
related by
\begin{equation}\label{helmholtz}
  (\id + \mathcal{H} \overline a) \nabla G = \lambda \nabla G^0
\end{equation}
where $\overline a = \lambda a - \ones$, $\mathcal H:=\nabla\mathcal
L^{-1} \nabla^*$  denotes the
modified Helmholtz projection,  $\mathcal L:= \frac{\lambda}{T} +
\nabla^*\nabla$, and $\id$ denotes the identity operator. Here and in the following, we tacitly identify
$\overline a$ with the multiplication operator that maps the vector
field $g:\Z^d\to\R^{d}$ to the vector field $(\overline a
g)(x):=\overline a(x)g(x)$. Moreover, since $G$ is integrable {in the sense of Lemma~\ref{L:Gint}, the operators} $\mathcal L^{-1}$,
and thus $\mathcal H$ and $(\id+\mathcal{H}\overline a)$ are bounded
linear operators on $\ell^2(\Z^d)$ (resp. $\ell^2(\Z^d,\R^d)$) and the
weighted spaces discussed in Step~2 below.

Identity \eqref{helmholtz} may be seen  by appealing to \eqref{T1.2} satisfied by $G$ and the
equation $\mathcal L G^0=\delta$ satisfied by $G^0$:
	\begin{align*}
	 (\id + \mathcal{H} \overline a) \nabla G
	 &=
	 \nabla G+\lambda \nabla\mathcal L^{-1} \nabla^* a\nabla G- \nabla\mathcal L^{-1} \nabla^* \nabla G\\
	 &=
	 \nabla G+\lambda \nabla\mathcal L^{-1} \left(\delta-\frac1TG\right)- \nabla\mathcal L^{-1}\left(\mathcal
L-\frac{\lambda}{T}\right) G\\
	 &=
	\lambda \nabla\mathcal L^{-1}\delta=\lambda \nabla G^0.
	\end{align*}
\medskip

{\bf Step 2.} Invertibility of $(\id+\mathcal H\overline a)$ in a weighted space:
In this step, we prove that there exists $q_0=q_0(d,\lambda)>1$ such
that the operator $(\id+\mathcal H\overline a):\ell^{2q}_{\omega_q}\to \ell^{2q}_{\omega_q}$ is invertible and
\begin{equation}\label{inv_Helmholtz}
  \|(\id+\mathcal H\overline
  a)\|_{B(\ell^{2q}_{\omega_q})}\leq C(d,\lambda)
\end{equation}
for all $1\leq q\leq q_0$
We split the proof into several sub-steps.
\medskip

{\it Step 2a.} Reduction to an estimate for $\mathcal H$:
We claim that it suffices to prove the following statement. There exists
$q_0=q_0(\lambda)>1$ such that
\begin{equation}\label{eq:8}
  \max \left\{ \| \mathcal{H} \|_{B(\ell^{2q}_{2q-2})}, \| \mathcal{H}
    \|_{B(\ell^{2q}_{4q-4})} \right\}
  \leq \frac{2-\lambda}{2(1-\lambda)}
\end{equation}
for all $1\leq q\leq q_0$.

Our argument is as follows: We only need to show that \eqref{eq:8}
implies that 
\begin{equation}
  \label{eq:10}
  \|\mathcal H\overline a\|_{B(\ell^{2q}_{\omega_q})}\leq \frac{2-\lambda}{2},
\end{equation}
since then $(\id+\mathcal H\overline a)$ can be inverted by a
Neumann-series. Since the $\|\cdot\|_{B(\ell^{2q}_{\omega_q})}$-norm is
submultiplicative, inequality \eqref{eq:10} follows from 
\begin{equation}\label{eq:9}
  \| \mathcal{H} \|_{B(\ell^{2q}_{\omega_q})}\leq
  \frac{2-\lambda}{2(1-\lambda)}\qquad\text{and}\qquad
  \|\overline a\|_{B(\ell^{2q}_{\omega_q})}\leq 1-\lambda.
\end{equation}
We start with the argument for the second inequality in \eqref{eq:9}. Thanks to  \eqref{ass:ell}, we have for all $a_0\in\Omega_0$
and $v\in\R^d$:
\begin{eqnarray*}
  |(\lambda a_0-\ones)v|^2&=&v\cdot((\lambda a_0-\ones)^t(\lambda a_0-\ones))v\\
  &=&\lambda^2|a_0v|^2-2v\cdot\frac{a_0+a_0^t}{2}v+|v|^2\ =\
  \lambda^2|a_0v|^2-2v\cdot a_0v+|v|^2\\
  &\stackrel{\eqref{ass:ell}}{\leq}&\lambda^2|v|^2-2\lambda|v|^2+|v|^2=(1-\lambda)^2|v|^2,
\end{eqnarray*}
{which shows~\eqref{eq:9} by definition of the (spectral) operator norm.}

Regarding the first inequality in \eqref{eq:9}, we note that $\|\cdot\|_{\ell^{2q}_{\omega_q}}^{2q} =
\|\cdot\|_{\ell^{2q}_{2q-2}}^{2q} + {T}^{1-q}
\|\cdot\|_{\ell^{2q}_{4q-4}}^{2q}$, as can been seen by recalling
definition \eqref{def_weight}. Hence,
\begin{align*}
  \| \mathcal{H} \|_{B(\ell^{2q}_{\omega_q})}^{2q}
  &=
  \sup_{\|g\|_{\ell^{2q}_{\omega_q}}\le1}\left( \|\mathcal Hg\|_{\ell^{2q}_{2q-2}}^{2q} + {T}^{1-q} \|\mathcal Hg\|_{\ell^{2q}_{4q-4}}^{2q}\right)\\
  &\le
  \max \left\{ \| \mathcal{H} \|_{B(\ell^{2q}_{2q-2})}^{2q}, \| \mathcal{H}
    \|_{B(\ell^{2q}_{4q-4})}^{2q} \right\}\sup_{\|g\|_{\ell^{2q}_{\omega_q}}\le1}\left( \|g\|_{\ell^{2q}_{2q-2}}^{2q} + {T}^{1-q}
    \|g\|_{\ell^{2q}_{4q-4}}^{2q}\right)\\
  &=
  \max \left\{ \| \mathcal{H} \|_{B(\ell^{2q}_{2q-2})}^{2q}, \| \mathcal{H}
    \|_{B(\ell^{2q}_{4q-4})}^{2q} \right\}
  \stackrel{\eqref{eq:8}}{<}\Big(\frac{2-\lambda}{2(1-\lambda)}\Big)^{2q},
\end{align*}
and \eqref{eq:9} follows.
\smallskip

{\it Step 2b.} Proof of \eqref{eq:8}:
A standard energy estimate yields
\begin{equation}\label{eq:12}
  \|\mathcal H\|_{B(\ell^2(\R^d,\Z^d))}\leq 1.
\end{equation}
{Indeed, given $g\in[\ell^2(\Z^d)]^d$, we have that $\mathcal{H} g = \nabla u$ where $u$ solves $\frac{\lambda}{T} u + \nabla^* \nabla u = \nabla^* g$. Testing with $u$ yields $\|\nabla u\|_{\ell^2(\Z^d)} \le \| g \|_{\ell^2(\Z^d)}$ which is just another way of writing~\eqref{eq:12}.}
In the following we prove the desired inequality \eqref{eq:8} by complex interpolation of
$B(\ell^2(\R^d,\Z^d))=B(\ell^2_0)$ with $B(\ell^{p}_\gamma)$ for
suitable $p$ and $\gamma$. In Proposition \ref{prop:weighted-dics-cz}
below (in Section~\ref{S:CZ}) we prove a Calder\'{o}n-Zygmund-type estimate
 for $\mathcal H$ in weighted spaces
and obtain
\begin{equation}
\| \mathcal{H} \|_{B(\ell^{p}_\gamma)} < \infty\quad\text{for all }
 2\le p\le \infty\text{ and }0 \le \gamma < \min\{2(p-1),{\textstyle\frac{1}{2}}\}.\label{eq:4}
\end{equation}
Fix such $p$ and $\gamma$ and $0 < \theta < 1$. A theorem due to Stein and Weiss \cite[Theorem 5.5.1]{Bergh-Lofstrom-76} that also holds in the discrete setting yields
\begin{equation}\label{stein_weiss}
 \| \mathcal{H} \|_{B(\ell^{p}_{\gamma'})} \le  \| \mathcal{H} \|_{B(\ell^{p}_\gamma)}^{1-\theta}  \| \mathcal{H} \|_{B(\ell^{p})}^\theta,\qquad\text{if $\gamma'= (1-\theta)\gamma$.}
\end{equation}
Likewise the classical Riesz-Thorin theorem \cite[Theorem 1.1.1]{Bergh-Lofstrom-76} yields
\begin{equation}\label{riesz_thorin}
 \| \mathcal{H} \|_{B(\ell^{p'}_{\gamma})} \le  \| \mathcal{H} \|_{B(\ell^{p}_\gamma)}^{1-\theta}  \| \mathcal{H} \|_{B(\ell^{2}_\gamma)}^\theta,\qquad\text{if $\frac{1}{p'} = \frac{1-\theta}{p} + \frac{\theta}{2}$.}
\end{equation}
In particular, the map $(p,\gamma)\mapsto\|\mathcal{H}\|_{\mathcal
  B(\ell_{\gamma}^{p})}$ is continuous at $(2,0)$: Given $\epsilon >
0$, we use \eqref{riesz_thorin} with $\gamma=0$ to find $p'>2$ such
that $\| \mathcal{H} \|_{\mathcal B(\ell^{p'})} \le 1+
\frac{\epsilon}{2}$. Then we apply \eqref{stein_weiss} to find
$\gamma' > 0$ such that $\max\{\| \mathcal{H} \|_{\mathcal
  B(\ell^{2}_{\gamma'})},\| \mathcal{H} \|_{\mathcal
  B(\ell^{p'}_{\gamma'})}\} \le 1+\epsilon$. Hence, we have $\| \mathcal{H} \|_{\mathcal
  B(\ell^{p}_{\gamma})} \le 1+\epsilon$ for the corner points
$(p,\gamma)$ of the square $[2,p']\times[0,\gamma']$. By~\eqref{riesz_thorin} resp.~\eqref{stein_weiss}, we may always decrease
either $p'$ resp.\ $\gamma'$ while achieving the same bound. Consequently we have that $\| \mathcal{H} \|_{\mathcal
  B(\ell^{p}_{\gamma})} \le 1+\epsilon$ for all $(p,\gamma)\in
[2,p']\times[0,\gamma']$. In particular, {letting $\epsilon=\frac{2-\lambda}{2(1-\lambda)}-1>0$}, there exists $q_0>1$ such
that $\| \mathcal{H} \|_{\mathcal B(\ell^{2q_0}_{2q_0-2})}\leq\frac{2-\lambda}{2(1-\lambda)}$ and the same
bound for $\| \mathcal{H} \|_{\mathcal B(\ell^{2q_0}_{4q_0-4})}$. By
monotonicity in the exponent, estimate \eqref{eq:8} follows for all
$1\leq q\leq q_0$. This completes the argument of Step~2.
\medskip


{\bf Step 3}. In this last step, we fix $d=2$ and derive the bound
\begin{equation}\label{nabla_G_bound}
 \sum_x |\nabla G(x)|^{2q} {\omega_q}(x)  = \| \nabla G \|_{\ell^{2q}_{\omega_q}}^{2q} \le C(\lambda,q) \log T
\end{equation}
for $q$ and ${\omega_q}$ as in Step 2.
The relation \eqref{helmholtz} and the estimate \eqref{inv_Helmholtz} yield
\[
 \| \nabla G \|_{\ell^{2q}_{\omega_q}} \le C(\lambda) \| \nabla G^0 \|_{\ell^{2q}_{\omega_q}}
\]
so that it is enough to consider the constant coefficient Green's
function whose behaviour is well-known and is given by 
(cf.~\eqref{eq:decay-const})
\begin{equation*}
  |\nabla G^0(x)| \le C (|x|+1)^{-1}\exp\bigg(-\frac{{\sqrt{\lambda}}|x|}{C\sqrt{T}}\bigg),
\end{equation*}
where $C$ is a universal constant.
Hence by
splitting $ \| \nabla G^0 \|_{\ell^{2q}_{\omega_q}}^{2q}$ into its contributions coming from $|x|\le \sqrt{T}$ and $|x| > \sqrt{T}$ and using
the definition of the weight ${\omega_q}$,  we have
	\begin{align*}
	\| \nabla G^0 \|_{\ell^{2q}_{\omega_q}}^{2q} &= \sum_x |\nabla G^0(x)|^{2q}\left((|x|+1)^{2q-2}+T^{1-q}(|x|+1)^{4q-4}\right) \\
	&\le C \sum_x (|x|+1)^{-2q} e^{-\frac{2q{\sqrt{\lambda}}|x|}{C\sqrt{T}}} \left((|x|+1)^{2q-2}+T^{1-q}(|x|+1)^{4q-4}\right) \\
	& \le C(\lambda ,q) \sum_{|x|\le\sqrt{T}} (|x|+1)^{-2} +C(\lambda ,q)\sum_{|x|>\sqrt{T}} {T}^{1-q}(|x|+1)^{2q-4} e^{-\frac{2q{\sqrt{\lambda}}|x|}{C\sqrt{T}}}\\
	&\le C(\lambda ,q) \log T+ C(\lambda ,q) \sum_{|x|>\sqrt{T}} {T}^{-1} \big({\textstyle\frac{|x|}{\sqrt{T}}}\big)^{2q-4} e^{-\frac{2{\sqrt{\lambda}}|x|}{C\sqrt{T}}}\\
	&\le C(\lambda ,q) \log T+C(\lambda ,q),
	\end{align*}
where we have used that $q > 1$.
\end{proof}

\subsection{Logarithmic Sobolev inequality and spectral gap
  revisited}\label{SS:E}

The LSI only enters the proof of Theorem~\ref{T1} in form of the following lemma borrowed from \cite{MO1}.
\begin{lemma}[Lemma~4 in \cite{MO1}]\label{L1}
  Let $\expec{\cdot}$ statisfy LSI~\eqref{eq:LSI} with constant $\rho>0$. Then we have that
\begin{equation}\label{T1.0}
  \langle|\zeta|^{2p}\rangle^\frac{1}{2p}\le C(\delta,p,\rho)\langle|\zeta|^2\rangle^{\frac{1}{2}}
  +\delta\Big\langle\bigg(\sum_{x\in\Z^d} \Big( \osc_{a(x)} \zeta\Big)^2 \bigg)^p\Big\rangle^\frac{1}{2p}
\end{equation}
for any $\delta>0$, $1\le p<\infty$ and $\zeta\in C_b(\Omega)$.
\end{lemma}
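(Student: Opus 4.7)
The plan is to upgrade LSI into an $L^{2p}$-moment bound via a Herbst-style argument, applied to the test function $|\zeta|^p$. First I would apply LSI \eqref{eq:LSI} to $g=|\zeta|^p$, which lies in $C_b(\Omega)$ since $\zeta$ does, and so obtain
\begin{equation*}
  \expec{|\zeta|^{2p}\log\tfrac{|\zeta|^{2p}}{\expec{|\zeta|^{2p}}}} \leq \tfrac{1}{2\rho}\expec{\sum_{x\in\Z^d}(\osc_{a(x)}|\zeta|^p)^2}.
\end{equation*}
The chain rule $|\alpha^p-\beta^p|\leq p(\alpha\vee\beta)^{p-1}|\alpha-\beta|$ for $\alpha,\beta\geq 0$, applied to the sup/inf defining $\osc_{a(x)}|\zeta|^p$, combined with the crude bound $\sup_{\tilde a:\,\tilde a(y)=a(y),\,y\neq x}|\zeta(\tilde a)|\leq |\zeta(a)|+\osc_{a(x)}\zeta$, produces the pointwise estimate
\begin{equation*}
  \sum_x(\osc_{a(x)}|\zeta|^p)^2 \leq C_p\,|\zeta|^{2(p-1)}\mathcal O^2 + C_p\,\mathcal O^{2p},
\end{equation*}
where $\mathcal O^2:=\sum_x(\osc_{a(x)}\zeta)^2$, and we used $\sum_x(\osc_{a(x)}\zeta)^{2p}\leq \mathcal O^{2p}$ since $p\geq 1$.

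Next I would recall that LSI implies SG with the same constant (cf.\ the remark after Definition~\ref{def:SG}), and apply SG to $g=|\zeta|^p$ to get $\expec{|\zeta|^{2p}}\leq\expec{|\zeta|^p}^2+\rho^{-1}\expec{\sum_x(\osc_{a(x)}|\zeta|^p)^2}$. Substituting the Step~1 estimate, invoking H\"older's inequality $\expec{|\zeta|^{2(p-1)}\mathcal O^2}\leq\expec{|\zeta|^{2p}}^{(p-1)/p}\expec{\mathcal O^{2p}}^{1/p}$, and then Young's inequality with a small parameter $\eta$ to absorb the resulting $\expec{|\zeta|^{2p}}$-factor back into the left-hand side, produces a moment recursion of the form
\begin{equation*}
  \expec{|\zeta|^{2p}}^{1/(2p)} \leq K_p\,\expec{|\zeta|^p}^{1/p} + L_p\,\expec{\mathcal O^{2p}}^{1/(2p)}
\end{equation*}
with $K_p,L_p$ depending only on $p$ and $\rho$. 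I would then iterate this recursion by halving the exponent ($2p\mapsto p\mapsto p/2\mapsto\cdots$), using the monotonicity $\expec{\mathcal O^q}^{1/q}\leq\expec{\mathcal O^{2p}}^{1/(2p)}$ for $q\leq 2p$, until the exponent drops below $2$, at which point Jensen's inequality gives $\expec{|\zeta|^q}^{1/q}\leq\expec{|\zeta|^2}^{1/2}$.

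The main obstacle is the final $\delta$-calibration, which I expect to be the most delicate part. The iteration just described produces a bound with \emph{fixed} constants $\widetilde K_p,\widetilde L_p$ in place of $C(\delta,p,\rho)$ and $\delta$; to get the claimed form with an arbitrarily small prescribed $\delta>0$ one must redo the Young step with an $\delta$-dependent parameter that trades the $\mathcal O$-coefficient against the $\expec{|\zeta|^2}$-coefficient, exploiting the full strength of LSI rather than just the variance bound SG provides. Concretely, this amounts to bookkeeping an entropy-to-moment comparison (via the variational formulation of entropy, as in the Herbst argument for the log-MGF of $|\zeta|^p$) so that the $\eta$-dependence of the Young constant $c(\eta)$ can be distributed across the $O(\log p)$ iterations in a way that yields an arbitrarily small final $\mathcal O$-coefficient at the price of a diverging $C(\delta,p,\rho)\to\infty$ as $\delta\downarrow 0$.
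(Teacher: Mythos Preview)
First, note that the paper does \emph{not} prove Lemma~\ref{L1}; it quotes it from \cite{MO1}. So there is no ``paper's own proof'' to compare against, and your attempt must stand on its own.

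Your argument has a genuine gap, and it is precisely the one you flag yourself. After writing down the LSI entropy bound in your first display, you never use it: the recursion you actually run is the SG-based one
\[
\expec{|\zeta|^{2p}}\le\expec{|\zeta|^p}^2+\rho^{-1}\expec{\textstyle\sum_x(\osc_{a(x)}|\zeta|^p)^2},
\]
followed by the chain rule, H\"older, Young with parameter $\eta$, and absorption. That chain of steps is exactly the proof of Lemma~\ref{LSGp} in the paper, and as you observe it produces \emph{fixed} constants $\widetilde K_p,\widetilde L_p$. Pushing $\eta$ in the Young step does not help: if you send $\eta\downarrow 0$ the $\mathcal O$-coefficient does not shrink (it is the term $C(\eta)\expec{\mathcal O^{2p}}$ that blows up), and if you send $\eta\uparrow 1$ the absorption fails. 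Distributing an $\eta$-loss over $O(\log p)$ iterations does not change this, because each iteration sees the same fixed SG constant $\rho^{-1}$ in front of the oscillation term. In short: with SG alone the coefficient of $\expec{\mathcal O^{2p}}^{1/(2p)}$ is bounded \emph{below} by a positive constant depending on $(p,\rho)$, and no amount of bookkeeping removes that.

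What is missing is the step that turns the entropy bound into a \emph{logarithmic} moment comparison. By Jensen's inequality applied to the concave function $\log$ under the tilted measure $|\zeta|^{2p}\,d\expec{\cdot}/\expec{|\zeta|^{2p}}$, one has
\[
\expec{|\zeta|^{2p}}\,\log\frac{\expec{|\zeta|^{2p}}}{\expec{|\zeta|^{p}}^{2}}\ \le\ \expec{|\zeta|^{2p}\log\tfrac{|\zeta|^{2p}}{\expec{|\zeta|^{2p}}}}\ \stackrel{\text{LSI}}{\le}\ \tfrac{1}{2\rho}\expec{\textstyle\sum_x(\osc_{a(x)}|\zeta|^p)^2}.
\]
Combining this with your chain-rule/H\"older bound on the right-hand side and dividing by $\expec{|\zeta|^{2p}}$ gives control on $\log\big(\expec{|\zeta|^{2p}}/\expec{|\zeta|^p}^2\big)$ in terms of the ratio $r:=\big(\expec{\mathcal O^{2p}}/\expec{|\zeta|^{2p}}\big)^{1/p}$. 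The $\delta$-freedom then comes from a simple dichotomy on a threshold $r_0$: if $r>r_0$ then trivially $\expec{|\zeta|^{2p}}^{1/(2p)}\le r_0^{-1/2}\expec{\mathcal O^{2p}}^{1/(2p)}$, while if $r\le r_0$ the logarithmic bound gives $\expec{|\zeta|^{2p}}^{1/(2p)}\le \exp\big(C(p,\rho)(r_0+r_0^p)\big)\,\expec{|\zeta|^p}^{1/p}$. Choosing $r_0$ large makes $r_0^{-1/2}$ as small as you like, at the price of a large prefactor on the lower moment; iterating down to $p=1$ then yields \eqref{T1.0}. This is the Herbst-type mechanism you allude to, but it hinges on the \emph{logarithm} coming from entropy, which your SG recursion never produces.
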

This inequality expresses a reverse Jensen inequality and allows to bound
high moments of $\zeta$ to the expense of some control on the
oscillations of $\zeta$. {The difference to SG lies in the fact that the improved integrability properties of LSI allow us to choose $\delta>0$ arbitrarily small.} In the proof of Theorem~\ref{T1}, we will apply \eqref{T1.0} to the random
variables $\zeta=\nabla_i \phi_T(0) + \xi_i$ for $i=1,\ldots,d$. The second moment of $\nabla_i \phi_T(0) + \xi_i$ will be controlled below, whereas the oscillation was already estimated Lemma~\ref{L:RP} and involves the second mixed derivatives of $G_T$.
\medskip

In the proof of Theorem~\ref{T2}, we just require the weaker statement of SG. To be precise, we will use an $L^{2p}_{\langle\cdot\rangle}$-version of SG which is the content of the following lemma. 
\begin{lemma}[cf.~Lemma~2 in \cite{GNO1}]
  \label{LSGp}
  Let $\expec{\cdot}$ statisfy SG~\eqref{eq:SG} with constant $\rho>0$. Then for
  arbitrary $1\le p<\infty$ and $\zeta\in
    C_b(\Omega)$ it holds that
  \begin{equation}\label{eq:SGp}
    \expec{|\zeta - \langle \zeta \rangle|^{2p}}\le C(p,\rho) \Big\langle\bigg(\sum_{x\in\Z^d} \Big( \osc_{a(x)} \zeta\Big)^2 \bigg)^p \Big\rangle.
  \end{equation}
\end{lemma}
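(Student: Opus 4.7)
The plan is to upgrade the $L^2$-inequality~\eqref{eq:SG} to the $L^{2p}$-version~\eqref{eq:SGp} by applying SG to $|F|^p$, where $F:=\zeta-\langle\zeta\rangle$, estimating the oscillation of $|F|^p$ via an oscillation analogue of the chain rule, and closing the resulting inequality by H\"older/Young absorption together with an interpolation step that reduces matters back to the $p=1$ case of SG. Throughout, set $S:=\sum_{x\in\Z^d}(\osc_{a(x)}\zeta)^2$ and note that $\osc_{a(x)}F=\osc_{a(x)}\zeta$ since $\langle\zeta\rangle$ is deterministic; the goal is thus $\langle|F|^{2p}\rangle\le C(p,\rho)\langle S^p\rangle$.

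First I would apply~\eqref{eq:SG} to the bounded continuous variable $|F|^p\in C_b(\Omega)$ to obtain
\[
 \langle|F|^{2p}\rangle \le \langle|F|^p\rangle^2 + \frac{1}{\rho}\Big\langle\sum_{x\in\Z^d}\bigl(\osc_{a(x)}|F|^p\bigr)^2\Big\rangle.
\]
For any $\tilde a\in\Omega$ agreeing with $a$ off $x$, the mean-value inequality applied to $r\mapsto r^p$ together with $\bigl||F(\tilde a)|-|F(a)|\bigr|\le\osc_{a(x)}F$ gives
\[
 \bigl||F(\tilde a)|^p-|F(a)|^p\bigr|\le p\bigl(|F(a)|+\osc_{a(x)}F\bigr)^{p-1}\osc_{a(x)}F,
\]
whence
\[
 \bigl(\osc_{a(x)}|F|^p\bigr)^2 \le C_p|F|^{2(p-1)}\bigl(\osc_{a(x)}F\bigr)^2 + C_p\bigl(\osc_{a(x)}F\bigr)^{2p}.
\]
Summing in $x$ and using the elementary inequality $\sum_x y_x^{2p}\le(\sum_x y_x^2)^p$ for nonnegative $y_x$, this yields the pointwise bound
\[
 \sum_{x\in\Z^d}\bigl(\osc_{a(x)}|F|^p\bigr)^2\le C_p|F|^{2(p-1)}S+C_p S^p.
\]

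Taking the expected value and applying H\"older with conjugate exponents $p/(p-1)$ and $p$ to the first term,
\[
 \bigl\langle|F|^{2(p-1)}S\bigr\rangle\le\langle|F|^{2p}\rangle^{(p-1)/p}\langle S^p\rangle^{1/p},
\]
so Young's inequality absorbs a small multiple of $\langle|F|^{2p}\rangle$ into the left-hand side and leaves
\[
 \langle|F|^{2p}\rangle\le C_p\langle|F|^p\rangle^2+C_p\langle S^p\rangle.
\]
It remains to control $\langle|F|^p\rangle^2$. For $1\le p\le 2$, concavity of $r\mapsto r^{p/2}$ gives $\langle|F|^p\rangle\le\langle|F|^2\rangle^{p/2}$, and the case $p=1$ of~\eqref{eq:SG} combined with Jensen ($\langle S\rangle^p\le\langle S^p\rangle$) yields $\langle|F|^2\rangle^p\le\rho^{-p}\langle S^p\rangle$. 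For $p\ge 2$, log-convexity of $L^q$-norms interpolates $\langle|F|^p\rangle^2\le\langle|F|^2\rangle^{pt}\langle|F|^{2p}\rangle^{1-t}$ with $t=1/(p-1)\in(0,1]$, and a second Young-plus-absorption step reduces again to the $p=1$ case. Combining these yields~\eqref{eq:SGp}.

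The one place requiring a touch of care is the oscillation analogue of the chain rule in the second step: because $\osc_{a(x)}$ is not a derivation one cannot simply write $\osc_{a(x)}|F|^p=p|F|^{p-1}\osc_{a(x)}F$, and the pure-oscillation term $(\osc_{a(x)}F)^{2p}$ must be retained so that the summation trick $\sum_x y_x^{2p}\le(\sum_x y_x^2)^p$ is available. Everything else is a standard $L^2\to L^{2p}$ upgrade by H\"older plus absorption.
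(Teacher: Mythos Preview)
Your argument is correct and follows essentially the same route as the paper's proof: apply SG to $|F|^p$, control $\osc_{a(x)}|F|^p$ by the elementary inequality $|t^p-s^p|\le C(p)(t^{p-1}|t-s|+|t-s|^p)$, use the $\ell^2\subset\ell^{2p}$ embedding to sum, then close by H\"older/Young absorption and reduce $\langle|F|^p\rangle^2$ to $\langle|F|^2\rangle^p$ via interpolation plus SG/Jensen. The only cosmetic differences are the order of the steps (the paper interpolates $\langle|\zeta|^p\rangle^2$ before treating the oscillation term, you do it after) and that you explicitly split into the cases $1\le p\le 2$ and $p\ge 2$, which is in fact slightly more careful than the paper's presentation.
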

The proof is a combination of the proofs of~\cite[Lemma~2]{GNO1} and~\cite[Lemma~4]{MO1}. We present it here for the convenience of the reader.
\begin{proof}
Without loss of generality assume that $\zeta\in C_b(\Omega)$ satisfies $\expec{\zeta}=0$. The triangle inequality and SG~\eqref{eq:SG} yield
\begin{align*}
 \expec{|\zeta|^{2p}} &\le 2\expec{ \big(|\zeta|^{p} - \expec{|\zeta|^p}\big)^2 } + 2\expec{|\zeta|^p}^2\\
 &\le \frac{2}{\rho}\expec{\sum_{x}\Big(\osc_{a(x)}|\zeta|^p\Big)^2}+2\expec{|\zeta|^{2p}}^{\frac{p-2}{p-1}}\expec{|\zeta|^2}^{\frac{p}{p-1}}.
\end{align*}
By Young's inequality, we may absorb $\langle |\zeta|^{2p} \rangle$ on the l.~h.~s.\ and we obtain that
\begin{equation}\label{eq:SG-p}
 \expec{|\zeta|^{2p}} \le \frac{4}{\rho}\expec{\sum_{x}\Big(\osc_{a(x)}|\zeta|^p\Big)^2}+C(p)\expec{|\zeta|^2}^{p}.
\end{equation}
We insert SG~\eqref{eq:SG}, note $\langle\zeta\rangle=0$ and apply Jensen's inequality to obtain that
\begin{equation}\label{eq:SG-1^p}
 \expec{|\zeta|^2}^{p} \le \rho^{-p} \expec{\sum_{x}\Big(\osc_{a(x)}\zeta\Big)^2}^p \le \rho^{-p} \expec{\bigg(\sum_{x}\Big(\osc_{a(x)}\zeta\Big)^2\bigg)^p}.
\end{equation}
In order to deal with the first term in~\eqref{eq:SG-p}, we note that the elementary inequality $|t^p - s^p| \le C(p) (t^{p-1} |t-s| + |t-s|^p)$ for all $t,s\ge0$ yields for every two coefficient fields $a,\tilde a \in \Omega$:
\[
\Big||\zeta(a)|^p - |\zeta(\tilde a)|^p\Big| \le C(p) \big(|\zeta(a)|^{p-1} |\zeta(a)-\zeta(\tilde a)| + |\zeta(a)-\zeta(\tilde a)|^p\big),
\]
where we have in addition used the triangle inequality in form of $\Big||\zeta(a)|-|\zeta(\tilde a)|\Big| \le |\zeta(a)-\zeta(\tilde a)|$. Letting $\tilde a \in \Omega$ run over the coefficient fields that coincide with $a$ outside of $x\in\Z^d$ yields
\[
 \osc_{a(x)}|\zeta|^p \le C(p) \bigg(|\zeta|^{p-1} \osc_{a(x)} \zeta + \Big(\osc_{a(x)} \zeta\Big)^p\bigg)
\]
Consequently we obtain 
\begin{align*}
 \expec{\sum_{x}\Big(\osc_{a(x)}|\zeta|^p\Big)^2} &\le C(p) \Bigg(\expec{|\zeta|^{2(p-1)}\sum_{x}\Big(\osc_{a(x)}\zeta\Big)^2} + C(p) \expec{\sum_{x}\Big(\osc_{a(x)} \zeta\Big)^{2p}}\Bigg)\\
 &\le C(p) \Bigg(\expec{|\zeta|^{2p}}^{\frac{p-1}{p}}\expec{\bigg(\sum_{x}\Big(\osc_{a(x)}\zeta\Big)^2\bigg)^p}^{\frac{1}{p}} + \expec{\bigg(\sum_{x}\Big(\osc_{a(x)}\zeta\Big)^2\bigg)^p}\Bigg)
\end{align*}
by H\"older's inequality and the discrete $\ell^2\subset\ell^{2p}$-inequality. Inserting this estimate as well as~\eqref{eq:SG-1^p} into~\eqref{eq:SG-p} yields
\[
 \expec{|\zeta|^{2p}} \le C(p,\rho) \Bigg(\expec{|\zeta|^{2p}}^{\frac{p-1}{p}}\expec{\bigg(\sum_{x}\Big(\osc_{a(x)}\zeta\Big)^2\bigg)^p}^{\frac{1}{p}} + \expec{\bigg(\sum_{x}\Big(\osc_{a(x)}\zeta\Big)^2\bigg)^p}\Bigg).
\]
Again, we may absorb the factor $\langle |\zeta|^{2p} \rangle$ on the l.~h.~s.\ using Young's inequality and thus conclude the proof of Lemma~\ref{LSGp}.
\end{proof}

\subsection{Proof of Theorem~\ref{T1}}\label{SS:T1}
{\bf Step 1}. We claim the following energy estimate:
\begin{equation}\label{T1.3}
  \expec{|\nabla\phi_T(0)+\xi|^2}\le C(\lambda) |\xi|^2.
\end{equation}
To see this, we multiply~\eqref{eq:1} with $\phi_T(0)$ and take the
expectation:
\[
\frac{1}{T}\expec{|\phi_T(0)|^2}+\expec{\phi_T(0)\nabla^*(a\nabla\phi_T)(0)}=-\expec{\phi_T(0)\nabla^*(a\xi)(0)}.
\]
Thanks to the stationarity of $\expec{\cdot}$ and the stationarity of
$\phi_T$, cf.~\eqref{LMC:1}, we have that
\begin{align*}
 \langle \phi_T(0) \nabla^* w(x) \rangle &= \sum_{i=1}^d \langle \phi_T(0) \big( w_i(x-e_i) - w_i(x) \big) \rangle\\ &= \sum_{i=1}^d \langle \big( \phi_T(e_i) - \phi_T(0) \big) w_i(x) \rangle = \langle \nabla \phi_T(0) \cdot w(x) \rangle 
\end{align*}
for all stationary vector fields $w : \Z^d \to \R^d$. This integration by parts property then yields
\[
\frac{1}{T}\expec{|\phi_T(0)|^2}+\langle \nabla\phi_T(0) \cdot a(0) \nabla\phi_T(0) \rangle =- \langle \nabla\phi_T(0) \cdot a(0) \xi \rangle.
\]
Since the first term on the left-hand side is non-negative, uniform
ellipticity, cf.~\eqref{ass:ell}, yields
\[
\expec{|\nabla \phi_T(0)|^2} \le  \lambda^{-2} |\xi|^2,
\]
and~\eqref{T1.3} follows from the triangle inequality.

\medskip

  {\bf Step 2}. We claim that
  \begin{equation}\label{P1.17}
    \bigg\langle\bigg(\sum_{x}|\nabla\nabla G_T(0,x)|^2|\nabla\phi_T(x)+\xi|^2\bigg)^p\bigg\rangle \le   
\lambda^{-2p}\langle|\nabla\phi_T(0)+\xi|^{2p}\rangle.
  \end{equation}
 We start by applying H\"older's inequality with exponent $p$ in space:
  \begin{multline*}
    \bigg(\sum_{x}|\nabla\nabla G_T(0,x)|^2|\nabla\phi_T(x)+\xi|^2\bigg)^p\\
    \le \bigg(\sum_{x}|\nabla\nabla G_T(0,x)|^2\bigg)^{p-1}
    \sum_{x}|\nabla\nabla G_T(0,x)|^2|\nabla\phi_T(x)+\xi|^{2p}.
  \end{multline*}
  We now apply $\langle\cdot\rangle$ to obtain
  \begin{align*}
    &\Big\langle\bigg(\sum_{x}|\nabla\nabla G_T(0,x)|^2 |\nabla\phi_T(x)+\xi|^2 \bigg)^p\Big\rangle\\
    &\le
    \bigg(\sup_{a\in\Omega}\sum_{x}|\nabla\nabla G_T(0,x)|^2\bigg)^{p-1}
    \sum_{x}\langle |\nabla\nabla G_T(0,x)|^2 |\nabla\phi_T(x)+\xi|^{2p}\rangle.
  \end{align*}
  At this stage, we appeal to the stationarity of $G_T$, cf.\
  (\ref{P1.15}), the stationarity of $\nabla\phi_T$, cf.~\eqref{LMC:1},
  and the stationarity of $\langle\cdot\rangle$ in
  form of
  \[
    \langle|\nabla\nabla G_T(0,x)|^2|\nabla\phi_T(x)+\xi|^{2p}\rangle
    =\langle|\nabla\nabla G_T(-x,0)|^2|\nabla\phi_T(0)+\xi|^{2p}\rangle,
  \]
which yields
  \begin{align*}
   &\Big\langle\bigg(\sum_{x}|\nabla\nabla G_T(0,x)|^2|\nabla\phi_T(x)+\xi|^2\bigg)^p\Big\rangle\\
   &\le \bigg(\sup_{a\in\Omega}\sum_{x}|\nabla\nabla G_T(0,x)|^2\bigg)^{p-1} \Big\langle \sum_{x} |\nabla\nabla G_T(-x,0)|^2|\nabla\phi_T(0)+\xi|^{2p} \Big\rangle\\
   &\le \bigg(\sup_{a\in\Omega}\sum_{x}|\nabla\nabla G_T(0,x)|^2\bigg)^{p-1} \bigg(\sup_{a\in\Omega}\sum_{x}|\nabla\nabla G_T(x,0)|^2 \bigg) \langle|\nabla\phi_T(0)+\xi|^{2p}\rangle.
  \end{align*}
  We conclude by appealing to symmetry, cf.~\eqref{P1.16}, and \eqref{P1.18}. Note that the transposed coefficient field $a^t$ satisfies $a^t\in\Omega$.

  \medskip

  {\bf Step 3}. Conclusion: The combination of \eqref{P1.17} and \eqref{T1.1} yields
  \begin{equation}\label{P1.35}
    \bigg\langle \bigg(\sum_{x}\Big(\osc_{a(x)}(\nabla_i\phi_T(0)+\xi_i)\Big)^2\bigg)^p\bigg\rangle^\frac{1}{p}
    \le C(d,\lambda) \langle|\nabla\phi_T(0)+\xi|^{2p}\rangle^\frac{1}{p}
  \end{equation}
for $i=1,\ldots,d$. We now appeal to Lemma~\ref{L1} with $\zeta=\nabla_i \phi_T(0) +
  \xi_i$, i.e.\
\[
   \langle|\nabla_i \phi_T(0) + \xi_i|^{2p}\rangle^\frac{1}{2p}\le C(\delta,p,\rho)\langle|\nabla_i \phi_T(0) + \xi_i|^2\rangle^{\frac{1}{2}}
  +\delta\Big\langle\bigg(\sum_{x\in\Z^d} \Big( \osc_{a(x)} (\nabla_i \phi_T(0) + \xi_i) \Big)^2 \bigg)^p\Big\rangle^\frac{1}{2p}.
\]
 On the r.~h.~s.~we insert the estimates (\ref{T1.3}) and (\ref{P1.35}) and
  sum in $i=1,\ldots,d$ to obtain (after redefining $\delta$)
  \begin{equation}\nonumber
    \sum_{i=1}^d\langle|\nabla_i\phi_T(0)+\xi_i|^{2p}\rangle^\frac{1}{2p}\le C(d,\lambda,\delta,p,\rho)|\xi|
    +\delta
    \langle|\nabla\phi_T(0)+\xi|^{2p}\rangle^\frac{1}{2p}.
  \end{equation}
  By the equivalence of finite-dimensional norms, it follows (again, after redefining $\delta$)
  \[
    \langle|\nabla\phi_T(0)+\xi|^{2p}\rangle^\frac{1}{2p}\le C(d,\lambda,\delta,p,\rho)|\xi|
    +\delta
    \langle|\nabla\phi_T(0)+\xi|^{2p}\rangle^\frac{1}{2p}.
  \]
  By choosing $\delta=\frac{1}{2}$, we may absorb the second term on
  the r.~h.~s.~into the l.~h.~s.~which completes the
  proof.\qed

\subsection{Proof of Theorem~\ref{T2}}\label{SS:T2}
As a starting point, we apply SG in its $p$-version Lemma~\ref{LSGp}:
We apply this inequality with $\zeta=\phi_T(0)$.  Since
$\expec{\phi_T(0)}=0$ (as can be seen by taking the expectation of
\eqref{eq:1} and using the stationarity of
  $\expec{\cdot}$ and $\phi_T$), estimate \eqref{eq:SGp} yields
\begin{equation*}
  \langle|\phi_T(0)|^{2p}\rangle\le \frac{1}{\rho}
  \Big\langle\bigg(\sum_{x}\Big( \osc_{a(x)} \phi_T(0) \Big)^2\bigg)^p\Big\rangle.
\end{equation*}
The oscillation estimate~\eqref{eq:osc_phi} yields
\[
 \langle|\phi_T(0)|^{2p}\rangle \le C(d,\lambda,\rho) \Big\langle \bigg( \sum_x |\nabla  G_T(0,x)|^2 |\nabla\phi_T(x)+ \xi|^2 \bigg)^p \Big\rangle.
\]
  With the help of H\"older's inequality we can introduce the weight
  $\omega_q$ from Lemma~\ref{L2} and get for the r.~h.~s.
  \begin{align*}
    &\Big\langle \bigg( \sum_x |\nabla G_T(0,x)|^2
        |\nabla\phi_T(x) + \xi|^2 \bigg)^p\Big\rangle\\
    &\le \Big\langle \bigg(\sum_x |\nabla G_T(0,x)|^{2q} \omega_q(x)\bigg)^{p-1} \sum_x
      |\nabla\phi_T(x)+ \xi|^{2p} \omega_q(x)^{-\frac{1}{q-1}} \Big\rangle\\
    &\le \bigg(\sup_{a\in\Omega} \sum_x |\nabla G_T(0,x)|^{2q}\omega_q(x) \bigg)^{p-1} \sum_x \langle|\nabla\phi_T(x) + \xi|^{2p}\rangle \omega_q^{-\frac{1}{q-1}}(x).
  \end{align*}
Due to the stationarity of $\nabla\phi_T+\xi$ and Lemma~\ref{L2} we obtain
  \begin{equation*}
 \langle |\phi_T(0)|^{2p} \rangle    \le C(d,\lambda,p)
    \begin{cases}
      (\log T)^{p-1} \big\langle
|\nabla\phi_T+\xi)(0)|^{2p}\big\rangle \sum_x \omega_q(x)^{-\frac{1}{q-1}}&\text{for }d=2,\\
      \big\langle
|\nabla\phi_T+\xi)(0)|^{2p}\big\rangle \sum_x \omega_q(x)^{-\frac{1}{q-1}}&\text{for }d>2.
    \end{cases}
  \end{equation*}
To conclude in the case of $d=2$, we simply insert \eqref{def_weight}
to bound (for $T\geq 2$)
\begin{align*}
 \sum_x \omega_q(x)^{-\frac{1}{q-1}} &\le C(p) \Bigg(\sum_{|x|\le\sqrt{T}} (|x|+1)^{2} +
\sum_{|x|>\sqrt{T}} T(|x|+1)^{-4} \Bigg)\\
& \le C(p) \Big( \log T + \frac{1}{\sqrt{T}} \Big) \le C(p) \log T.
\end{align*}
If $d>2$, we find that
\[
 \sum_x \omega_q(x)^{-\frac{1}{q-1}} = \sum_x (|x|+1)^{-2d} \le C(d),
\]
which finishes the proof.
\qed

\section{A weighted Calder\'{o}n-Zygmund estimate}\label{S:CZ}

In this section we present a discrete Calder\'{o}n-Zygmund
estimate on $\ell^p$-spaces with Muckenhoupt weights, which we used in
Step~2b of the proof of estimate~\eqref{eq:L2} in Lemma~\ref{L2} in
the case $d=2$, see \eqref{eq:4}. Although we require the estimate in this paper only in dimension $d=2$, we present it here for any dimension $d\ge
2$ since it may be of independent interest. The proof closely follows~\cite[Lemma~28]{GNO1prep}; the difference lies in the inclusion of weighted spaces which requires a bit more effort.

\begin{proposition}\label{prop:weighted-dics-cz}
Let $T>0$, let $g:\Z^d\to\R^d$ be a compactly supported function and let $u\in\ell^2(\Z^d)$ be the unique solution to
	\begin{equation}\label{eq:cz-1}
	{\frac{1}{T}} u+\nabla^*\nabla u=\nabla^*g\quad\text{on }\Z^d.
	\end{equation}
Then for all $1<p<\infty$ and all $0\le \gamma < \min\{d(p-1),1/2\}$ we
have
\[
\sum_{x\in\Z^d}|\nabla u(x)|^p(|x|+1)^\gamma\le C(d,p,\gamma) \sum_{x\in\Z^d}|g(x)|^p(|x|+1)^\gamma.
\]
\end{proposition}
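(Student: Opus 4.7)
The plan is to treat $g\mapsto\nabla u$ as a discrete Calder\'on--Zygmund operator with constant-coefficient kernel and then introduce the weight by interpolation, following the strategy of the analogous continuum argument. The $\ell^2$-endpoint is immediate: testing~\eqref{eq:cz-1} with $u$ and applying Cauchy--Schwarz gives
\[
\tfrac{1}{T}\|u\|_{\ell^2}^2+\|\nabla u\|_{\ell^2}^2=\sum_x\nabla u(x)\cdot g(x)\le\|\nabla u\|_{\ell^2}\|g\|_{\ell^2},
\]
hence $\|\nabla u\|_{\ell^2}\le\|g\|_{\ell^2}$ uniformly in $T$. This will serve as the anchor for all subsequent interpolation.

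Next I would set up a kernel representation. Let $G_0$ denote the Green's function of the constant-coefficient operator $\tfrac{1}{T}+\nabla^*\nabla$ on $\Z^d$. Since $g$ has compact support, discrete integration by parts yields
\[
\nabla u(x)=\sum_{y\in\Z^d}K(x-y)\,g(y),\qquad K(z):=\nabla\nabla^* G_0(z).
\]
From the standard discrete heat-kernel bounds referenced around~\eqref{eq:decay-const}, integration in time over $(0,T)$ produces the Calder\'on--Zygmund pointwise size estimate $|K(z)|\le C(d)(|z|+1)^{-d}$ together with the discrete H\"ormander-type smoothness $|K(z)-K(z-e_i)|\le C(d)(|z|+1)^{-d-1}$ for $|z|\ge 2$, both uniformly in $T$. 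Combined with the $\ell^2$-anchor, the classical Calder\'on--Zygmund decomposition gives a weak-type $(1,1)$ estimate for $g\mapsto\nabla u$; Marcinkiewicz interpolation with $\ell^2$ then yields the unweighted strong $\ell^p$-boundedness for $1<p\le 2$, and duality (the adjoint has the same structural form because $G_0$ is even) extends this to $2\le p<\infty$. This settles the case $\gamma=0$.

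To incorporate the weight $w(x)=(|x|+1)^\gamma$ I would reuse the complex-interpolation scheme already employed in Step~2b of the proof of Lemma~\ref{L2}. The power weight $w$ lies in the discrete Muckenhoupt class $A_p$ precisely when $0\le\gamma<d(p-1)$, so the sharp weighted Calder\'on--Zygmund theory would immediately give the natural range; to keep the argument self-contained I would instead interpolate (Stein--Weiss) between the unweighted $\ell^p$-bound above and a strongly-weighted $\ell^p$-estimate obtained by brute force from the off-diagonal decay of $K$. The main obstacle is this weighted step: the factor $\gamma<d(p-1)$ in the admissible range reflects the $A_p$-condition while the restriction $\gamma<\tfrac12$ is the price of keeping the interpolation elementary rather than invoking the full weighted theory; getting both endpoints in a genuinely discrete form (so as not to deteriorate the constant in a continuum approximation) is the delicate point. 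Verifying the uniform-in-$T$ discrete H\"ormander regularity of $K$ in the kernel step is a second technical point, but it reduces to routine summation-by-parts from the known discrete heat-kernel gradient bounds.
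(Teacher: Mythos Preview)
Your plan is viable but takes a genuinely different route from the paper. The paper does \emph{not} develop a discrete Calder\'on--Zygmund theory from kernel bounds. Instead it works entirely on the Fourier side: writing the discrete multiplier $\mathfrak M_T$ and its continuum counterpart $\mathfrak M_T^{cont}$, it proves the algebraic identity $\mathfrak M_T-\mathfrak M_T^{cont}=\mathfrak M_T\mathfrak M_T^*$ for an explicit correction $\mathfrak M_T^*$, then splits into low and high frequencies via a smooth cutoff at scale $L$. On the low-frequency piece the continuum weighted Calder\'on--Zygmund estimate (quoted as a black box) handles $\mathfrak M_T^{cont}$, while the correction satisfies $\|\mathcal F^{-1}(\mathfrak M_T^*\eta_L)\|_{\ell^1_\gamma}\le C L^{2\gamma-1}$ and is absorbed for $L$ large; the high-frequency piece is smooth and treated by integration by parts. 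Equivalence of discrete and continuous $\ell^p_\gamma$-norms for band-limited functions and a weighted Young inequality are the auxiliary tools. So the paper \emph{transfers} the continuum result rather than rebuilding it discretely.

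Your direct approach via kernel size/H\"ormander bounds, weak $(1,1)$, and Marcinkiewicz is standard and would work for the unweighted estimate; if you then invoke discrete $A_p$ theory you would in fact obtain the full range $0\le\gamma<d(p-1)$, strictly better than the statement. The weak point in your write-up is the ``elementary'' alternative: you say you would Stein--Weiss interpolate between the unweighted bound and a ``strongly-weighted $\ell^p$-estimate obtained by brute force from the off-diagonal decay of $K$'', but you never specify this second endpoint, and the bare decay $|K(z)|\lesssim(|z|+1)^{-d}$ does not by itself give any weighted $\ell^p$ bound (the kernel is only borderline in $\ell^1$). More importantly, your rationalization of the constraint $\gamma<\tfrac12$ is off: that restriction is not intrinsic to either CZ theory or to interpolation, it is an artifact of the paper's absorption step, which needs the exponent $2\gamma-1$ in $L^{2\gamma-1}$ to be negative. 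In your framework no such cutoff appears, so you should either drop the pretense of recovering $\gamma<\tfrac12$ by elementary means or commit to the $A_p$ route and note that you actually prove more.
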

This proposition is a discrete version of the well-known
\emph{continuum Calder\'{o}n-Zygmund estimate} with
Muckenhoupt weight:
\begin{proposition}[see \cite{Stein-70}]\label{prop:weighted-cont-cz}
Let $T>0$, let $g:\R^d\to\R^d$ be smooth and compactly supported, and
let $u:\R^d\to\R$ be the unique smooth and decaying solution to
\[
 {\frac{1}{T}} u - \Delta u= -\nabla \cdot g\quad\text{on }\R^d.
\]
Then for all $1<p<\infty$ and all $-d<\gamma<d(p-1)$ we have that
\[
 \int_{\R^d} |\nabla u(x)|^p|x|^\gamma  \;dx \le C(d,p,\gamma) \int_{\R^d} |g(x)|^p|x|^\gamma  \;dx.
\]
\end{proposition}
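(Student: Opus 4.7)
\textbf{Proof proposal for Proposition~\ref{prop:weighted-cont-cz}.}
The plan is to identify the map $g \mapsto \nabla u$ as a classical Calder\'on--Zygmund operator (with constants independent of $T$) and then invoke the weighted boundedness theorem for such operators together with the fact that power weights $|x|^\gamma$ are Muckenhoupt weights in the stated range of $\gamma$. A preliminary rescaling $x = \sqrt{T}\, y$, $v(y) = u(\sqrt{T}\, y)$, $h(y) = \sqrt{T}\, g(\sqrt{T}\, y)$, turns the equation into $v - \Delta v = -\nabla\cdot h$ on $\R^d$, and both sides of the inequality scale by the same power of $T$ (namely $T^{(d+\gamma-p)/2}$), so it suffices to treat the case $T = 1$.

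Denote by $G$ the Bessel potential that is the fundamental solution of $1 - \Delta$ on $\R^d$. Then $u = -G * (\nabla \cdot g) = -\sum_j \partial_j G * g_j$, hence
\[
\nabla u = K * g, \qquad K_{ij}(x) = -\partial_i\partial_j G(x),
\]
understood as a principal-value convolution (plus a bounded multiple of $g$ coming from the $\delta$-mass in $\partial_i\partial_j G$). On the Fourier side the symbol is $m_{ij}(\xi) = \xi_i \xi_j/(1+|\xi|^2)$, which is bounded by $1$, so Plancherel gives the $L^2$-boundedness $\|\nabla u\|_{L^2} \le \|g\|_{L^2}$. Moreover, since $G$ agrees with the Newtonian potential modulo a smooth and exponentially decaying remainder, the kernel satisfies the standard pointwise estimates $|K(x)| \le C(d)|x|^{-d}$ and $|\nabla K(x)| \le C(d)|x|^{-d-1}$ away from the origin. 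Together these are exactly the Calder\'on--Zygmund hypotheses.

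With that in hand, I would invoke the weighted Calder\'on--Zygmund theorem of Coifman--Fefferman (Stein, \cite{Stein-70}): any Calder\'on--Zygmund operator extends to a bounded operator on $L^p(w\,dx)$ for every $1 < p < \infty$ and every weight $w$ in the Muckenhoupt class $A_p(\R^d)$, with norm depending only on $p$, the CZ constants, and $[w]_{A_p}$. It is classical that the power weight $w(x) = |x|^\gamma$ lies in $A_p(\R^d)$ if and only if $-d < \gamma < d(p-1)$, and that its $A_p$ characteristic depends only on $d$, $p$ and $\gamma$ in this range. Applying the weighted theorem to our kernel $K$ yields
\[
\int_{\R^d} |\nabla u(x)|^p |x|^\gamma \, dx \le C(d,p,\gamma) \int_{\R^d} |g(x)|^p |x|^\gamma \, dx,
\]
as desired. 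The main (and only real) obstacle is the independence of the constant from $T$, which is why I would carry out the scaling reduction at the very beginning; once $T=1$ is fixed, both the $L^2$-bound and the pointwise kernel bounds are literal classical estimates and the remainder is a direct citation.
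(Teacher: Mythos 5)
The paper does not actually prove Proposition~\ref{prop:weighted-cont-cz}; it is stated as a known result with a reference to Stein's book \cite{Stein-70}, so there is no internal proof to compare against. Your argument is a correct reconstruction of the standard route. The scaling reduction to $T=1$ is carried out correctly (both sides indeed pick up the same factor $T^{(d+\gamma-p)/2}$, so nothing is lost), the identification of $g\mapsto\nabla u$ as a convolution operator with symbol $\xi_i\xi_j/(1+|\xi|^2)$ and kernel $-\partial_i\partial_j G$ (plus a harmless $\delta$-contribution on the diagonal) is accurate, the pointwise kernel bounds $|K(x)|\lesssim|x|^{-d}$, $|\nabla K(x)|\lesssim|x|^{-d-1}$ away from the origin follow as you say from the comparison with the Newtonian potential plus exponential decay, and the closing step via $A_p$-weighted boundedness of Calder\'on--Zygmund operators together with the classical fact that $|x|^\gamma\in A_p(\R^d)$ if and only if $-d<\gamma<d(p-1)$ gives exactly the claimed inequality with the right dependence of the constant.

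One small historical caveat: attributing the general $A_p$-weighted Calder\'on--Zygmund theorem (Coifman--Fefferman) to Stein~\cite{Stein-70} is anachronistic, since that theorem postdates the 1970 book. What Stein's book itself contains is a direct weighted estimate for singular integrals against power weights $|x|^\gamma$, proved via the Stein--Weiss interpolation-with-change-of-measure theorem rather than the full Muckenhoupt machinery; this special case is all that the Proposition requires. Your route is the more general modern one, and it is equally valid; if you want the citation to line up with what the reference actually proves, swap the appeal to the Coifman--Fefferman theorem for the Stein--Weiss power-weight estimate, or alternatively cite a later source (Coifman--Fefferman, or Grafakos) for the $A_p$ version.
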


The rest of this section is devoted to the proof of Proposition~\ref{prop:weighted-dics-cz}. To simplify the upcoming argument, fix for the remainder of this
section two indices $j,\ell\in\{1,\dots,d\}$. By  linearity it
suffices to consider instead of \eqref{eq:cz-1} the equation
\begin{equation}\label{eq:cz-2}
  {\frac{1}{T}} u+\nabla^*\nabla u=\nabla^*_\ell g\quad\text{on }\Z^d
\end{equation}
for scalar $g$, and then to prove
\begin{equation}\label{eq:disc-weighted-cz}
  \sum_{x\in\cap\Z^d}|\nabla_j u(x)|^p(1+|x|)^\gamma\le C(d,p,\gamma)  \sum_{x\in\cap\Z^d}|g(x)|^p(1+|x|)^\gamma.
\end{equation}
The discrete estimate \eqref{eq:disc-weighted-cz} will be  obtained from
Proposition~\ref{prop:weighted-cont-cz} by a perturbation argument. 
More precisely, we compare the discrete equation \eqref{eq:cz-2}
and its continuum version in Fourier space. We denote the Fourier
transform on $\R^d$ by
	\[
	(\Fcal g)(\xi)
	=
	(2\pi)^{-d/2}\int_{\R^d} g(x)e^{-i\xi\cdot x}\ dx,\quad \xi\in\R^d,
	\]
        and for functions defined on the discrete lattice $\Z^d$ we define the \emph{discrete Fourier transform} as
	\[
	(\Fcal_{dis}g)(\xi)
	=
	(2\pi)^{-d/2}\sum_{x\in\Z^d}g(x)e^{-i\xi\cdot x},\quad \xi\in\R^d.
	\]
Note that $\Fcal_{dis}F$ is $(-\pi,\pi)^d$-periodic and that we have the inversion formula
\begin{equation}\label{eq:Finv}
(\Fcal^{-1}(\chi\Fcal_{dis}g))(x)=g(x)\qquad\text{for all }x\in\Z^d,
\end{equation}
where $\chi$ denotes the indicator function of the \emph{Brillouin zone}
$(-\pi,\pi)^d$ {which is the unit cell of the Fourier transform on a lattice}.

The Fourier multipliers corresponding to \eqref{eq:cz-2} and its continuum
version are given by
\begin{align*}
  \Mfrak_{T}^{cont}(\xi)
  =
  \frac{\xi_j\xi_\ell}{{\frac{1}{T}}+|\xi|^2},\qquad\qquad \Mfrak_{T}(\xi)
  =
  \frac{(e^{-i\xi_j}-1)(e^{i\xi_\ell}-1)}{{\frac{1}{T}}+\sum_{n=1}^d|e^{i\xi_n}-1|^2}.
\end{align*}
In particular,~\eqref{eq:cz-2} reads in Fourier space as
\begin{equation*}
  \nabla_j u=\mathcal F^{-1}(\chi\Mfrak_{T}\mathcal F_{dis}g)
\end{equation*}
and \eqref{eq:disc-weighted-cz} is equivalent to
\begin{equation}\label{eq:main-ineq}
  \sum_{x\in\Z^d}|(\Fcal^{-1}(\chi\ \Mfrak_{T}\Fcal_{dis}g))(x)|^p\,(|x|+1)^\gamma\le C(d,p,\gamma)\sum_{x\in\Z^d}|g(x)|^p\,(|x|+1)^\gamma.
\end{equation}
Finally, we state two auxiliary results that will be used in the subsequent
argument and which we prove at the end of this section. The first
result shows that the discrete and continuum norms for band-restricted
functions are equivalent. For brevity, we set
\begin{equation}\label{eq:norms}
 \|g\|_{\ell^p_\gamma} = \bigg( \sum_{x\in\Z^d} |g(x)|^p(|x|+1)^\gamma \bigg)^{\frac{1}{p}} \quad\text{and}\quad
 \|g\|_{L^p_\gamma} = \bigg( \int_{\R^d} |g(x)|^p|x|^\gamma  \;dx \bigg)^{\frac{1}{p}}.
\end{equation}
Furthermore, we use the notation
$\|\cdot\|_{\ell^p_\omega}$ (resp. $\|\cdot\|_{L^p_\omega}$), if
$(|x|+1)^\gamma$ (resp. $|x|^\gamma$) is replaced by a general weight
function $\omega$.
\begin{lemma}[Equivalence of discrete and continuous norms]\label{lemma:equiv-norms}
For all $L$ large enough, the $\ell^p_\gamma$-norm and the $L^p_\gamma$-norm are equivalent for functions supported on $[-\frac{1}{L},\frac{1}{L}]^d$ in
Fourier space, i.e.\
\[
 \frac{1}{C(d,p,\gamma)} \|g\|_{L^p_\gamma} \le \|g\|_{\ell^p_\gamma} \le C(d,p,\gamma) \|g\|_{L^p_\gamma}
\]
for all functions $g:=\mathcal{F}^{-1}(F):\R^d\to\C$ with $F$ supported on $[-\frac{1}{L},\frac{1}{L}]^d$ where we let without loss generality $\frac{1}{L} < \pi$.
\end{lemma}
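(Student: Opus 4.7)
The main idea is to exploit the band-limited nature of $g$ via two complementary kernel representations: a convolution identity $g=\psi*g$ for passing from the continuous norm to the discrete one, and a sampling identity $g(y)=\sum_{k} g(k)\,\tilde\psi(y-k)$ for the reverse direction. Both $\psi$ and $\tilde\psi$ are Schwartz functions built from a smooth cutoff $\eta\in C_c^\infty(\R^d)$ chosen so that $\eta\equiv 1$ on $[-1/L,1/L]^d$ and $\mathrm{supp}\,\eta\subset(-\pi,\pi)^d$ (possible since $1/L<\pi$). Since $F$ is supported in $[-1/L,1/L]^d$ one has $F=\eta F$, hence $g=\psi*g$ for $\psi$ proportional to $\Fcal^{-1}(\eta)$. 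Likewise, since $\Fcal_{dis}(g|_{\Z^d})=F$ on the Brillouin zone, one checks that $F=\eta\,\Fcal_{dis}(g|_{\Z^d})$ on all of $\R^d$, which upon inverse transforming yields the sampling formula with a Schwartz $\tilde\psi$.

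\textit{Upper bound $\|g\|_{\ell^p_\gamma}\le C\|g\|_{L^p_\gamma}$.} Starting from $g(x)=\int\psi(x-y)g(y)\,dy$ and applying H\"older's inequality, one obtains
\[
|g(x)|^p\le\|\psi\|_{L^1}^{p-1}\int|g(y)|^p|\psi(x-y)|\,dy.
\]
Multiplying by $(|x|+1)^\gamma$ and summing over $x\in\Z^d$, the bound $(|x|+1)^\gamma\le C(|y|+1)^\gamma(1+|x-y|)^\gamma$ combined with the Schwartz decay of $\psi$ yields $\sum_x(|x|+1)^\gamma|\psi(x-y)|\le C(|y|+1)^\gamma$, and hence
\[
\|g\|_{\ell^p_\gamma}^p\le C\int|g(y)|^p(|y|+1)^\gamma\,dy.
\]
For $|y|\ge 1$ one has $(|y|+1)^\gamma\le 2^\gamma|y|^\gamma$, giving the desired contribution. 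The \emph{main obstacle} is the region $|y|\le 1$, where the continuum weight $|y|^\gamma$ degenerates while $(|y|+1)^\gamma\ge 1$. Here one applies H\"older in dual form to the reproducing identity:
\[
|g(y)|\le\|g\|_{L^p_\gamma}\left(\int|\psi(y-z)|^{p'}|z|^{-\gamma p'/p}\,dz\right)^{1/p'}.
\]
Thanks to the hypothesis $\gamma<d(p-1)$, one has $\gamma p'/p<d$, so the integral is uniformly bounded for $|y|\le 1$, giving $\int_{|y|\le 1}|g(y)|^p\,dy\le C\|g\|_{L^p_\gamma}^p$.

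\textit{Lower bound $\|g\|_{L^p_\gamma}\le C\|g\|_{\ell^p_\gamma}$.} Starting from the sampling formula $g(y)=\sum_k g(k)\,\tilde\psi(y-k)$ and applying H\"older's inequality together with the uniform bound $\sum_k|\tilde\psi(y-k)|\le C$ (from Schwartz decay), one obtains
\[
|g(y)|^p\le C\sum_k|g(k)|^p|\tilde\psi(y-k)|.
\]
Multiplying by $|y|^\gamma$ and integrating, the triangle inequality $|y|^\gamma\le C((|k|+1)^\gamma+|y-k|^\gamma)$ and Schwartz decay of $\tilde\psi$ give $\int|\tilde\psi(y-k)||y|^\gamma\,dy\le C(|k|+1)^\gamma$, which yields $\|g\|_{L^p_\gamma}^p\le C\|g\|_{\ell^p_\gamma}^p$ and completes the proof.
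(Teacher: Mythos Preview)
Your argument is correct and complete. Both directions rely on the key hypothesis $\gamma<d(p-1)$ only to control the singularity of the continuum weight $|y|^\gamma$ near the origin, exactly as the paper does.

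However, your route differs substantially from the paper's. The paper proceeds \emph{locally}: it partitions $\R^d$ into unit cubes $z+Q$ centered at lattice points, uses the band restriction to obtain a Bernstein-type estimate $\|\nabla^n g\|_{L^p}\le C L^{-n}\|g\|_{L^p}$, and then invokes Sobolev embedding and a mean-value argument to show that on each cube the sup, the $L^p$-average, and the center value $|g(z)|$ are all comparable (for $L$ large enough, so that an error term can be absorbed). Summing the resulting cube-wise equivalence against the weight yields the result; the origin cube is handled separately by the same H\"older trick you use. Your approach is \emph{global} and kernel-based: the reproducing identity $g=\psi*g$ and the Shannon-type sampling formula $g(y)=\sum_k g(k)\tilde\psi(y-k)$ let you pass between the two norms via weighted Young-type estimates, with the Schwartz decay of $\psi,\tilde\psi$ absorbing the weight transfer. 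This is more direct (no Sobolev embedding, no absorption step), and has the minor advantage that once $\eta$ is fixed with $\eta\equiv 1$ on, say, $[-\pi/2,\pi/2]^d$ and support in $(-\pi,\pi)^d$, the constants are manifestly independent of $L$ for all $L\ge 2/\pi$; the paper's absorption argument instead produces an implicit threshold $L_0(d,p)$.
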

The second result is a generalization of Young's convolution estimate
to weighted spaces.
\begin{lemma}[Young's convolution estimate on weighted spaces]\label{lemma:weighted-young}
Let $\omega:\Z^d\to\R$ satisfy
\begin{equation}\label{eq:23}
  \omega(x)\geq 1\qquad\text{and}\qquad \omega(x)\le \omega(y)\omega(x-y)\qquad\text{for all }x,y\in\Z^d.
\end{equation}
Then the estimate
\begin{equation}\label{eq:weighted-young}
	\|f \ast_{dis} g\|_{\ell_\omega^p}
	\le
	\|f\|_{\ell_\omega^q}\|g\|_{\ell_\omega^r},
	\quad
	1+\frac1p=\frac1q+\frac1r
\end{equation}
holds, where $\ast_{dis}$ denotes the discrete convolution on $\Z^d$:
\begin{equation*}
  (f\ast_{dis}g)(x):=\sum_{y\in\Z^d}f(x-y)g(y).
\end{equation*}
The same estimate
holds in the continuum case (with $\ast_{dis}$ and
$\|\cdot\|_{\ell^p_\omega}$ replaced by the usual
convolution $\ast$ and $\|\cdot\|_{L_\omega^p}^p$, respectively) as long
as $\omega$ satisfies \eqref{eq:23} for all $x,y\in\R^d$.
\end{lemma}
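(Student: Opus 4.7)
The plan is to reduce the weighted Young inequality to the standard unweighted one on $\Z^d$ (resp.\ $\R^d$) by distributing the weight $\omega^{1/p}$ between the two factors in the convolution. The starting observation is that the exponent relation $1+\frac{1}{p}=\frac{1}{q}+\frac{1}{r}$ combined with $q,r\ge 1$ forces $\frac{1}{p}\le\min\{\frac{1}{q},\frac{1}{r}\}$, i.e.\ $p\ge\max\{q,r\}$. Since the hypothesis \eqref{eq:23} includes $\omega\ge 1$, it follows that $\omega(z)^{1/p}\le\omega(z)^{1/q}$ and $\omega(z)^{1/p}\le\omega(z)^{1/r}$ for every $z\in\Z^d$.

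Combining this with the submultiplicativity $\omega(x)\le\omega(y)\omega(x-y)$ raised to the power $1/p$ yields the key pointwise bound
\[
\omega(x)^{1/p}\le\omega(y)^{1/q}\,\omega(x-y)^{1/r}\qquad\text{for all }x,y\in\Z^d.
\]
Inserted into the convolution, this gives
\[
|(f\ast_{dis}g)(x)|\,\omega(x)^{1/p}\le\sum_{y\in\Z^d}\bigl(|f(y)|\omega(y)^{1/q}\bigr)\bigl(|g(x-y)|\omega(x-y)^{1/r}\bigr)=(F\ast_{dis}G)(x),
\]
where I set $F(z):=|f(z)|\omega(z)^{1/q}$ and $G(z):=|g(z)|\omega(z)^{1/r}$. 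Taking $\ell^p$-norms and applying the classical unweighted Young inequality $\|F\ast_{dis}G\|_{\ell^p}\le\|F\|_{\ell^q}\|G\|_{\ell^r}$ (valid precisely under $1+\frac{1}{p}=\frac{1}{q}+\frac{1}{r}$) yields \eqref{eq:weighted-young}, since by construction $\|F\|_{\ell^q}=\|f\|_{\ell_\omega^q}$, $\|G\|_{\ell^r}=\|g\|_{\ell_\omega^r}$, and $\|(f\ast_{dis}g)\omega^{1/p}\|_{\ell^p}=\|f\ast_{dis}g\|_{\ell_\omega^p}$.

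The continuum case goes through verbatim: sums become integrals, the pointwise estimate is unchanged under the stated hypotheses on $\omega$, and the classical continuum Young inequality is invoked at the last step. There is no real obstacle to this argument; the only point requiring care is the exponent bookkeeping that ensures $p\ge\max\{q,r\}$. This is precisely where the hypothesis $\omega\ge 1$ enters, converting the symmetric submultiplicative bound $\omega(x)^{1/p}\le\omega(y)^{1/p}\omega(x-y)^{1/p}$ into the asymmetric form $\omega(y)^{1/q}\omega(x-y)^{1/r}$ that is compatible with feeding the standard Young inequality.
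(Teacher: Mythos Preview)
Your proof is correct and takes a genuinely different route from the paper's own argument. You absorb the weight into the functions via the pointwise estimate $\omega(x)^{1/p}\le\omega(y)^{1/q}\omega(x-y)^{1/r}$ (using submultiplicativity for the first inequality and $\omega\ge 1$ together with $p\ge\max\{q,r\}$ for the second), and then invoke the unweighted Young inequality for $F=|f|\omega^{1/q}$ and $G=|g|\omega^{1/r}$ as a black box. The paper instead re-derives Young's inequality directly: it splits $|f(x-y)g(y)|=|f(x-y)|^{q/p}|g(y)|^{r/p}\cdot|f(x-y)|^{1-q/p}\cdot|g(y)|^{1-r/p}$, applies a three-exponent H\"older inequality in $y$, raises to the $p$th power, sums in $x$ with the weight $\omega(x)$ attached, and only then uses submultiplicativity on the mixed sum $\sum_{x,y}|f(x-y)|^q|g(y)|^r\omega(x)$ and $\omega\ge 1$ on the remaining unweighted factors $\|f\|_{\ell^q}^{p-q}\|g\|_{\ell^r}^{p-r}$. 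Both arguments consume the two hypotheses in exactly the same way; yours is more modular (it isolates the weight manipulation from the convexity step), while the paper's is self-contained and does not appeal to the classical inequality as a lemma.
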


Now, we are ready to start the proof of
Proposition~\ref{prop:weighted-dics-cz} in earnest.

\textbf{Step 1}. Fourier multipliers:
We claim that the invoked Fourier multipliers satisfy
\begin{equation}\label{eq:step1}
 \Mfrak_{T}-\Mfrak_{T}^{cont}=\Mfrak_{T}\Mfrak^*_{T},
\end{equation}
where we define
\begin{equation}\label{eq:m*}
  \Mfrak^*_{T}:= 1 - \frac{1}{h(\xi_j) h(-\xi_\ell)} + \frac{|\xi|^2}{{\frac{1}{T}} + |\xi|^2} \ \sum_{k=1}^d \frac{|\xi_k|^2
(1-|h(\xi_k)|^2)}{|\xi|^2 h(\xi_j) h(-\xi_\ell)}
\end{equation}
and
\begin{equation}\label{eq:hz}
 h(z):=\begin{cases}\frac{e^{iz}-1}{iz}&0\neq z\in\C,\\1&z=0.\end{cases}
\end{equation}
Indeed, \eqref{eq:step1} is true for $\xi=0$. For $\xi\neq 0$ the definition of $h(z)$ yields that
\begin{align*}
  \Mfrak^*_{T} &= 1 - \frac{\mathfrak{M}^{cont}_{T}}{\mathfrak{M}_{T}}\\
 &= 1 - \frac{\xi_j\xi_\ell}{(e^{i\xi_j}-1)(e^{-i\xi_\ell}-1)} - \frac{1}{{\frac{1}{T}}+|\xi|^2} \ \sum_{k=1}^d \frac{\xi_j\xi_\ell({\frac{1}{T}} + |\xi_k|^2 -
({\frac{1}{T}} +
|e^{i\xi_k}-1|^2))}{(e^{i\xi_j}-1)(e^{-i\xi_\ell}-1)}\\
 &= 1 - \frac{1}{h(\xi_j) h(-\xi_\ell)} - \frac{|\xi|^2}{{\frac{1}{T}}+|\xi|^2} \ \sum_{k=1}^d \frac{{\frac{1}{T}} + |\xi_k|^2 - ({\frac{1}{T}} +
|\xi_k|^2|h(\xi_k)|^2)}{|\xi|^2 h(\xi_j) h(-\xi_\ell)}\\
 &= 1 - \frac{1}{h(\xi_j) h(-\xi_\ell)} + \frac{|\xi|^2}{{\frac{1}{T}} + |\xi|^2} \ \sum_{k=1}^d \frac{|\xi_k|^2 (1-|h(\xi_k)|^2)}{|\xi|^2 h(\xi_j)
h(-\xi_\ell)}.
\end{align*}
In order to prove uniformity in $T$ (recall that the assertion of Proposition~\ref{prop:weighted-dics-cz} does not involve $T$), we may split $\mathfrak{M}_{T}^*$ into two terms independent of $T$ and a simple prefactor involving $\frac{1}{T}$:
\begin{equation}
  \Mfrak^*_{T}
  =
  \Mfrak^*_1+\frac{|\xi|^2}{{\frac{1}{T}}+|\xi|^2} \Mfrak^*_2,\label{eq:19}
\end{equation}
where we have set
	\begin{align}\label{eq:M1a}
	\Mfrak^*_1
	&=
	1 - \frac{1}{h(\xi_j) h(-\xi_\ell)},\\
	\Mfrak^*_2
	&=\label{eq:M2a}
	\sum_{k=1}^d \frac{|\xi_k|^2 (1-|h(\xi_k)|^2)}{|\xi|^2 h(\xi_j) h(-\xi_\ell)}.
	\end{align}
\medskip

\textbf{Step 2}. Reduction by separating low and high frequencies:
We take a smooth cutoff function $\eta_1$ that equals one in $[-1,1]^d$ with compact support in $(-\pi,\pi)^d$. We then rescale it to
\[
\eta_L(\xi)=\eta_1(L\xi).
\]
Using the triangle inequality and $\chi\eta_L=\eta_L$, we separate the expression on the left hand side of \eqref{eq:main-ineq} into low and high frequencies:
\[
\|\Fcal^{-1}(\chi\ \Mfrak_{T}\Fcal_{dis}g)\|_{\ell^p_\gamma}
\le
\underbrace{\|\Fcal^{-1}(\eta_L \Mfrak_{T}\Fcal_{dis}g)\|_{\ell^p_\gamma}}_{I}
+
\underbrace{\|\Fcal^{-1}(\chi(1-\eta_L) \Mfrak_{T}\Fcal_{dis}g)\|_{\ell^p_\gamma}}_{II}.
\]
Term $I$ represents low frequencies (treated in Step 4) and term $II$
represents high frequencies (treated in Step 5). Hence, in order to
conclude, we only need to prove the following two statements:
\begin{enumerate}[(I)]
\item For all $L\geq L_0$ (where $L_0\geq 1$ only
  depends on $\gamma,p$ and $d$) we have
  \begin{equation}\label{eq:low}
  \|\Fcal^{-1}(\Mfrak_{T} \eta_L\Fcal_{dis}g)\|_{\ell^p_\gamma}
  \le C(d,\gamma,p)\,\|g\|_{\ell^p_\gamma}.
\end{equation}
\item For all $L\geq 1$ we have
  \begin{equation}\label{eq:high}
    \|\Fcal^{-1}(\chi(1-\eta_L) \Mfrak_{T}\Fcal_{dis}g)\|_{\ell^p_\gamma}
    \le C(d,\gamma,p,L)
    \|g\|_{\ell^p_\gamma}.
  \end{equation}
\end{enumerate}
We note that while the constants a-priori depend on the cutoff functions $\eta_1$ and $\zeta_1$ (the latter
will be introduced in Step~3), both may be constructed in a canonical way only depending on $d$.

\medskip

\textbf{Step 3}. A bound on the correction $\Mfrak^*_{T}$ for low frequencies:
This is perhaps the most important ingredient in the proof, as it is
here that we truly capture the difference between the discrete and
continuous settings. Recall that $\Mfrak^*_1$ and $\Mfrak^*_2$ are defined
in~\eqref{eq:M1a} and \eqref{eq:M2a}. In this step we prove that 
\begin{equation}\label{eq:m*-estimate}
 \|\Fcal^{-1}(\Mfrak^*_j\eta_L)\|_{\ell^1_\gamma}\le C(d,\gamma) L^{2\gamma-1},\quad j=1,2,
\end{equation}
for $L$  large enough.

We start the argument with the observation that $h(z)$, defined in \eqref{eq:hz}, and $h^{-1}(z)$ are both analytic in the disk $\{z\in\C:|z|<2\pi\}$ and we may write
\[
  \frac{1}{h(z)} = 1 + z r_1(z) \quad\text{and}\quad h(z) = 1 + z r_2(z)
\]
with two functions $r_1, r_2$ which are analytic on the disk $\{z\in\C:|z|<2\pi\}$.
	
\textbf{The term $\Mfrak_1^*$.}
This term becomes
\[
\Mfrak^*_1= 1 - \frac{1}{h(\xi_j) h(-\xi_\ell)} = \xi_\ell r_1(-\xi_\ell) - \xi_j r_1(\xi_j) + \xi_j \xi_\ell r_1(\xi_j) r_1(-\xi_\ell),
\]
which is  a linear combination of terms of the form $i\xi_m\phi(\xi)$, $m=1,\ldots,d$, with a (generic) analytic function $\phi$ on the disk $\{z\in\C:|z|<2\pi\}$.

\textbf{The term $\Mfrak_2^*$.}
Denoting the real part of $z\in\C$ by $\mathrm{Re}(z)$, we compute that
\[
  \Mfrak^*_2= \sum_{k=1}^d \frac{|\xi_k|^2 (1-|h(\xi_k)|^2)}{|\xi|^2 h(\xi_j) h(-\xi_\ell)} = \sum_{k=1}^d \frac{|\xi_k|^2
\big(2\xi_k \mathrm{Re}(r_2(\xi_k)) + |\xi_k|^2 |r_2(\xi_k)|^2\big)}{|\xi|^2 h(\xi_j) h(-\xi_\ell)},
\]
which is  a linear combination of terms of the form $\xi_m\frac{|\xi_n|^2}{|\xi|^2}\phi(\xi)$, $m,n=1,\ldots,d$, with a (generic) analytic function $\phi$ on the disk $\{z\in\C:|z|<2\pi\}$.

\medskip

Hence our problem reduces to showing that
\begin{equation}\label{eq:m*-est1}
 \left\|\Fcal^{-1} \left(i\xi_m\frac{|\xi_n|^2}{|\xi|^2}\phi(\xi)\eta_L\right) \right\|_{\ell^1_\gamma} \le C(d,\gamma,\phi) L^{2\gamma-1}
\end{equation}
and
\begin{equation}\label{eq:m*-est2}
 \left\| \Fcal^{-1} \left(i\xi_m \phi(\xi)\eta_L\right) \right\|_{\ell^1_\gamma} \le C(d,\gamma,\phi) L^{2\gamma-1}
\end{equation}
for any generic analytic function $\phi$ on the complex disc of radius
$2\pi$. For the argument consider the Schwartz functions
\begin{equation*}
  K_L=\Fcal^{-1}(\phi\eta_L)\qquad\text{and}\qquad  \hat K_L = \Fcal^{-1}(\phi({\textstyle \frac{\cdot}{L}})\eta_1),
\end{equation*}
and note that both are related through the scaling:
\[
K_L(x) = \frac{1}{L^d}\hat K_L({\textstyle \frac xL}).
\]
For what follows it is crucial to note that the family $\{\hat
K_L\}_{L\geq 1}$ is equibounded in the space of Schwartz space
functions, i.e.\ for all multi-indices $\alpha,\beta$ we have 
\begin{equation}\label{eq:KL-equi}
  \sup_x | x^\alpha \partial_x^\beta \hat{K}_L(x)| \le C(\phi,\alpha,\beta),
\end{equation}
{where $x^\alpha:=\prod_{i=1}^dx^{\alpha_i}_i$ and $\partial_x^\beta:=\prod_{i=1}^d\partial^{\beta_i}_{x_i}$.}
We now turn to the argument for \eqref{eq:m*-est1} and
\eqref{eq:m*-est2}. The latter is easily shown, in fact with a
slightly better decay rate of $L^{\gamma-1}$. Since $\gamma\geq 0$ and $L\geq1$, we have that
\begin{equation}\label{eq:weight_scale}
 (L|y|+1)^\gamma = L^\gamma (|y|+L^{-1})^\gamma\le L^\gamma (|y|+1)^\gamma,
\end{equation}
and the definition of $K_L$ yields
\begin{align*}
\left\| \Fcal^{-1} \left(i\xi_m \phi(\xi)\eta_L\right)
\right\|_{\ell^1_\gamma} &= \sum_{x\in\Z^d} |\partial_m K_L(x)|\
(|x|+1)^\gamma\\
&\le L^{\gamma-1}\left(L^{-d}\sum_{x\in \frac{1}{L}\Z^d} |\partial_m \hat{K}_L(x)|\ (|x|+1)^\gamma\right).
\end{align*}
Thanks to \eqref{eq:KL-equi} the term in the brackets on the
right-hand side is bounded by $C(d,\gamma,\phi)$ and
\eqref{eq:m*-est2} follows. To show~\eqref{eq:m*-est1}, we notice that
\[
\mathcal{F}^{-1}\Big({\frac{\xi_m}{|\xi|^2}}\Big) =
\frac{(2\pi)^{\frac{d}{2}}}{|S^{d-1}|}\frac{x_m}{|x|^d}\qquad\text{as
  a tempered distribution on $\R^d$},
\]
where $|S^{d-1}|$ denotes the surface area of the $d-1$-dimensional unit sphere $S^{d-1}\subset \R^d$. Therefore standard properties of the Fourier transform yield
\begin{equation}\label{eq:sing}
 \Fcal^{-1} \left(i\xi_m\frac{\xi_n^2}{|\xi|^2}\phi(\xi)\eta_L\right) = \frac{(2\pi)^{\frac{d}{2}}}{|S^{d-1}|} \partial_n^2 \left(\frac{x_m}{|x|^d} \ast K_L\right).
\end{equation}
Next we introduce a spatial cutoff $\zeta_L$ (as opposed to the frequency cutoff $\eta_L$), defined as follows: first define a smooth cutoff function $\zeta_1$ for $\{x\in\R^d:|x|\le1\}$ in $\{x\in\R^d:|x|\le2\}$ and its rescaled version
\[
 \zeta_L(x) = \zeta_1({\textstyle \frac xL}).
\]
By the triangle inequality and since the derivative in
\eqref{eq:sing} may fall on either term in the convolution, for
\eqref{eq:m*-est1} we only need to argue that
\begin{equation}\label{eq:m*-est22}
 \sum_{x} \left| \left(\frac{\zeta_Lx_m}{|x|^d} \ast \partial_n^2K_L\right)(x) \right| (|x|+1)^\gamma \le C(d,\gamma,\phi) L^{2\gamma-1}
\end{equation}
and 
\begin{equation}\label{eq:m*-est3}
 \sum_{x}\left| \left( \partial_n^2 \frac{(1-\zeta_L)x_m}{|x|^d} \ast K_L \right)(x) \right| (|x|+1)^\gamma
 \le C(d,\gamma,\phi) L^{2\gamma-1}.
\end{equation}
By definition of the (continuous) convolution, thanks to
\begin{equation*}
  (|x|+1)^\gamma\leq (|x-y|+1)^\gamma(|y|+1)^\gamma\qquad\text{for all }x,y\in\Z^d,\ \gamma\geq0,
\end{equation*}
by a change of variables and~\eqref{eq:weight_scale}, we obtain that
\begin{align}\nonumber
  \text{[l.h.s. of \eqref{eq:m*-est22}]}=\,&\sum_{x\in\Z^d} \bigg| \int_{\R^d} \frac{\zeta_L(y) y_m}{|y|^d} \partial_n^2 K_L(x-y) \;dy \bigg| (|x|+1)^\gamma\\
 \le\,& \sum_{x\in\Z^d} \int_{\R^d} \Big| \frac{\zeta_L(y) y_m}{|y|^d} \partial_n^2 K_L(x-y) \Big| (|x-y|+1)^\gamma(|y|+1)^\gamma\;dy\nonumber\\
 =\,& \sum_{x\in\frac1L\Z^d} \int_{\R^d} \Big| L \frac{\zeta_1(y) y_m}{|y|^d} L^{-2-d} \partial_n^2 \hat{K}_L(x-y) \Big| (L|x-y|+1)^\gamma(L|y|+1)^\gamma\;dy.\label{eq:step3.1}
\end{align}
Hence~\eqref{eq:weight_scale} yields
\begin{align*}
  &\text{[l.h.s. of \eqref{eq:m*-est22}]}\\
  &\leq\,L^{2\gamma-1}L^{-d}\sum_{x\in\frac1L\Z^d} \int_{\R^d} \Big| \frac{\zeta_1(y) y_m (|y|+1)^\gamma}{|y|^d} \Big| (|x-y|+1)^\gamma
  \big| \partial_n^2 \hat{K}_L(x-y) \big| \;dy\\
  &\leq\,L^{2\gamma-1}\int_{|y|\leq 2} |y|^{1-d}(|y|+1)^\gamma\Big(L^{-d}\sum_{x\in\frac1L\Z^d} (|x-y|+1)^\gamma
  \big| \partial_n^2 \hat{K}_L(x-y) \big|\Big) \;dy.
\end{align*}
The Schwartz property~\eqref{eq:KL-equi} yields
\begin{equation*}
  \Big(L^{-d}\sum_{x\in\frac1L\Z^d} (|x-y|+1)^\gamma
  \big| \partial_n^2 \hat{K}_L(x-y) \big|\Big)\leq C(d,\gamma,\phi),
\end{equation*}
and thus
\begin{align*}
  \text{[l.h.s. of \eqref{eq:m*-est22}]}\,\leq\,
  C(\phi)L^{2\gamma-1} \int_{|y|\leq 2}
  |y|^{1-d}(|y|+1)^\gamma\;dy\leq C(d,\gamma,\phi)\,L^{2\gamma-1},
\end{align*}
which completes the argument for \eqref{eq:m*-est22}.
The second term \eqref{eq:m*-est3} is bounded similarly: by the same triangle inequality and change of variables that allowed us to arrive
at \eqref{eq:step3.1}, we obtain a bound on the l.~h.~s.\ of \eqref{eq:m*-est3} by
\[
 L^{-1-d}\sum_{x\in\frac1L\Z^d} \int_{\R^d} \Big| \partial_n^2 \frac{(1-\zeta_1(x-y)) (x_m-y_m)}{|x-y|^d} \Big| (L|x-y|+1)^\gamma
|\hat{K}_L(y)|
(L|y|+1)^\gamma \;dy.
\]
We insert \eqref{eq:weight_scale} again to obtain a bound by
\[
 L^{2\gamma-1} \frac{1}{L^d}\sum_{x\in\frac1L\Z^d} \int_{\R^d} \Big| \partial_n^2 \frac{(1-\zeta_1(x-y)) (x_m-y_m)}{|x-y|^d} \Big|
(|x-y|+1)^\gamma |\hat{K}_L(y)| (|y|+1)^\gamma \;dy.
\]
This time, we use that $\left|\partial_n^2\Big((1-\zeta_1(x-y)) (x_m-y_m)|x-y|^{-d}\Big)\right| \, (|x-y|+1)^\gamma$ is integrable for large $x-y$ and
vanishes for $|x-y| \le 1$, to obtain that
\[
  \frac{1}{L^d}\sum_{x\in\frac1L\Z^d} \Big| \partial_n^2 \frac{(1-\zeta_1(x-y)) (x_m-y_m)}{|x-y|^d} \Big| (|x-y|+1)^\gamma \le C(d,\gamma,\phi).
\]
Consequently, it remains to bound
\[
 L^{2\gamma-1} \int_{\R^d} |\hat{K}_L(y)| (|y|+1)^\gamma \;dy,
\]
which, thanks to \eqref{eq:KL-equi}, is clearly bounded by $C(d,\gamma,\phi)L^{2\gamma-1}$.
\medskip

\textbf{Step 4}. Low frequencies -- proof of \eqref{eq:low}:
We assume that $L$ is large enough, so that we can apply
Lemma~\ref{lemma:equiv-norms} to deduce the equivalence of the norm
$\ell^p_\gamma$ and $L^p_\gamma$.
For brevity we set $F= \eta_L\Fcal_{dis}g$.
Equation~\eqref{eq:step1} yields
	\begin{equation}\label{eq:disc-cont}
	\| \Fcal^{-1}(\Mfrak_{T} F) \|_{\ell^p_\gamma}
	\le
	\| \Fcal^{-1}(\Mfrak_{T}^{cont} F) \|_{\ell^p_\gamma}
	+
	\| \Fcal^{-1}(\Mfrak_{T}\Mfrak^*_{T} F) \|_{\ell^p_\gamma}.
	\end{equation}
With help of the continuum Calder\`on-Zygmund estimate, cf.~Proposition~\ref{prop:weighted-cont-cz}, and the equivalence of
discrete and continuous norms, see Lemma~\ref{lemma:equiv-norms}, we
get for the first term:
	\[
	\|\Fcal^{-1}(\Mfrak_{T}^{cont}F)\|_{L^p_\gamma}\leq C
	\|g\|_{\ell^p_\gamma}.
	\]
Hence, we only need to estimate
the term $\Fcal^{-1}(\Mfrak_{T}\Mfrak^*_{T} F)$. First we notice that by definition of $F$ and $\eta_{L}$, we have that $F=\eta_{L/2}F$.
Since the Fourier transform turns multiplication into convolution, we have
\begin{align} \label{eq:M_frak-split}
	&\Fcal^{-1}(\Mfrak_{T}\Mfrak_{T}^*F)
	\stackrel{\eqref{eq:19}}{=}
	\Fcal^{-1} \left(\Mfrak_{T} \left(\Mfrak_1^* + \frac{|\xi|^2}{{\frac{1}{T}}+|\xi|^2} \Mfrak^*_2 \right) \eta_{\frac{L}{2}} F \right)\\\nonumber
	&=
	(2\pi)^{d/2} \left(\Fcal^{-1} \left( \Mfrak_1^*\eta_{\frac{L}{2}} \right)  \ast_{dis} \Fcal^{-1} (\Mfrak_{T} F) + \Fcal^{-1} \left( \Mfrak_2^* \eta_{\frac{L}{2}} \right) \ast_{dis} \Fcal^{-1}\left( \frac{|\xi|^2}{{\frac{1}{T}}+|\xi|^2} \Mfrak_{T} F \right) \right).
\end{align}
We estimate the right-hand side using the Young's inequality of Lemma \ref{lemma:weighted-young}.
For the first term, we get
	\[
	\|\Fcal^{-1}(\Mfrak_1^*\eta_{\frac{L}{2}})\ast_{dis}\Fcal^{-1}(\Mfrak_{T} F)\|_{\ell^p_\gamma}
	\le
	\|\Fcal^{-1}(\Mfrak_1^*\eta_{\frac{L}{2}})\|_{\ell_\gamma^1}\|\Fcal^{-1}(\Mfrak_{T} F)\|_{\ell_\gamma^p},
	\]
and likewise for the second term:
\[
 \left\| \Fcal^{-1}\left( \Mfrak_2^* \eta_{\frac{L}{2}} \right) \ast_{dis} \Fcal^{-1}\left( \frac{|\xi|^2}{{\frac{1}{T}}+|\xi|^2} \Mfrak_{T} F\right)
\right\|_{\ell^p_\gamma}
\le \left\| \Fcal^{-1}\left( \Mfrak_2^*
\eta_{\frac{L}{2}} \right) \right\|_{\ell^1_\gamma} \left\| \Fcal^{-1}\left( \frac{|\xi|^2}{{\frac{1}{T}}+|\xi|^2} \Mfrak_{T}
F\right)\right\|_{\ell^p_\gamma}.
\]
In both cases, the first term is bounded by~\eqref{eq:m*-estimate}, see Step 3. Hence, we have shown
\begin{align}\label{eq:M1}
 \left\|\Fcal^{-1}\left(\Mfrak_1^*\eta_{\frac{L}{2}} \right) \ast_{dis} \Fcal^{-1}\left(\Mfrak_{T} F\right)\right\|_{\ell^p_\gamma} &\le
C L^{2\gamma-1} \left\|\Fcal^{-1}\left(\Mfrak_{T} F\right)\right\|_{\ell^p_\gamma},\\
  \left\| \Fcal^{-1}\left( \Mfrak_2^* \eta_{\frac{L}{2}} \right) \ast_{dis} \Fcal^{-1}\left(\frac{|\xi|^2}{{\frac{1}{T}}+|\xi|^2} \Mfrak_{T} F\right)
\right\|_{\ell^p_\gamma} &\le C L^{2\gamma-1}
\left\|
\Fcal^{-1}\left( \frac{|\xi|^2}{{\frac{1}{T}}+|\xi|^2} \Mfrak_{T} F\right)\right\|_{\ell^p_\gamma}.\label{eq:M2}
\end{align}
We may use the equivalence of norms for band-restricted functions, cf.~Lemma~\ref{lemma:equiv-norms}, and then write the last term as another convolution to obtain that
\begin{align*}
  \left\| \Fcal^{-1}\left( \frac{|\xi|^2}{{\frac{1}{T}}+|\xi|^2} \Mfrak_{T} F\right)\right\|_{\ell^p_\gamma} &\le C \left\|
    \Fcal^{-1}\left( \frac{|\xi|^2}{{\frac{1}{T}}+|\xi|^2} \right) \ast \Fcal^{-1}\left( \Mfrak_{T} F\right)\right\|_{L^p_\gamma}\\
  &\le C \left\| \Fcal^{-1}\left(\Mfrak_{T} F\right)\right\|_{L^p_\gamma},
\end{align*}
where for the second inequality we used  the continuum Calder\'on-Zygmund estimate with Muckenhoupt weights for the Fourier-multiplier $|\xi|^2/({\frac{1}{T}}+|\xi|^2)$ which follows from Proposition \ref{prop:weighted-cont-cz}.
Combining \eqref{eq:M_frak-split}, \eqref{eq:M1} and \eqref{eq:M2}
and using the equivalence of norms yet again, we arrive at
	\begin{align*}
	\|\Fcal^{-1}(\Mfrak_{T}\Mfrak_{T}^*F)\|_{\ell^p_\gamma}
	&\le C
	L^{2\gamma-1}\|\Fcal^{-1}(\Mfrak_{T} F)\|_{L_\gamma^p}\\ &\le C
	L^{2\gamma-1}\|\Fcal^{-1}(\Mfrak_{T} F)\|_{\ell_\gamma^p}.
	\end{align*}
Hence, for $L$ sufficiently large the right-hand side may be absorbed
into the left-hand side of \eqref{eq:disc-cont}, and \eqref{eq:low} follows.

\medskip

\textbf{Step 5}. High frequencies -- proof of~\eqref{eq:high}: By the weighted convolution estimate of Lemma~\ref{lemma:weighted-young}, we have that
\begin{align*}
  \|\mathcal F^{-1}(\Mfrak_{T}(1-\eta_L)\chi\mathcal F_{dis}
  g)\|_{\ell^p_\gamma}
  &=
  \|\mathcal F^{-1}(\Mfrak_{T}(1-\eta_L)\chi)\ast_{dist}\mathcal F^{-1}(\chi\mathcal F_{dis}
  g)\|_{\ell^p_\gamma}\\
  &\le
  \|\mathcal F^{-1}(\Mfrak_{T}(1-\eta_L)\chi)\|_{\ell^1_\gamma}\|\mathcal F^{-1}(\chi\mathcal F_{dis}
  g)\|_{\ell^p_\gamma}.
\end{align*}
where we haved used that $\chi^2 = \chi$ by definition. {By the Fourier inversion formula~\eqref{eq:Finv}, the right-hand side equals $\|\mathcal F^{-1}(\Mfrak_{T}(1-\eta_L)\chi)\|_{\ell^1_\gamma}\|g\|_{\ell^p_\gamma}$ whereof we just need to estimate the first term. We have that}
\begin{align*}
  \|\mathcal F^{-1}(\Mfrak_{T}(1-\eta_L)\chi)\|_{\ell^1_\gamma}
  &= \sum_{x\in\Z^d}|\mathcal F^{-1}(\Mfrak_{T}(1-\eta_L)\chi)(x)|(1+|x|)^\gamma\\
  &=
  \sum_{x\in\Z^d}|\mathcal
  F^{-1}(\Mfrak_{T}(1-\eta_L)\chi)(x)|(1+|x|)^{\gamma+2d}(1+|x|)^{-2d}\\
  &\le C\sup_{x\in\Z^d}\big|\mathcal
  F^{-1}(\Mfrak_{T}(1-\eta_L)\chi)(x)(1+|x|)^{\gamma+2d}\big|.
\end{align*}
We rewrite this result using the definition of the Fourier transform
and integration by parts. Let $x\in\Z^d$ and let $\alpha\in\N^d$ be an
arbitrary multi-index such that $|\alpha| \ge \gamma + 2d$. {Then we have that:}
\begin{align*}
  x^{2\alpha}\mathcal
  F^{-1}(\Mfrak_{T}(1-\eta_L)\chi)(x)&=(2\pi)^{-d}\int_{(-\pi,\pi)^d}\Mfrak_{T}(\xi)(1-\eta_L)(\xi)x^{2\alpha}e^{i\xi\cdot
  x}\,d\xi\\
  &=(2\pi)^{-d}\int_{(-\pi,\pi)^d}\Mfrak_{T}(\xi)(1-\eta_L)(\xi)i^{2|\alpha|}\partial_\xi^{2\alpha}e^{i\xi\cdot
  x}\,d\xi\\
&=(2\pi)^{-d}\int_{(-\pi,\pi)^d}i^{2|\alpha|}\partial_\xi^{2\alpha}\big(\Mfrak_{T}(1-\eta_L)\big)(\xi)e^{i\xi\cdot
  x}\,d\xi.
\end{align*}
For the integration by parts when passing from the second to third lines of the last identity, we used that $\Mfrak_{T}(\xi)(1-\eta_L(\xi))$ and $\exp(i\xi\cdot x)$ are
$(-\pi,\pi)^d$-periodic function of $\xi$. It remains to argue that the latter integral is bounded by a constant $C(L,\alpha)$.
The main difficulty lies in checking that the estimate is uniform in
$T\geq 1$.
Since the integral over the Brillouin zone is finite, it suffices to show that
\begin{equation}\label{eq:CZ3}
  \sup_{\xi\in(-\pi,\pi)^d\setminus (-\frac{1}{L},\frac{1}{L})^d}|\partial_\xi^{\alpha}\Mfrak_{T}(\xi)|\le C(L,\alpha)
\end{equation}
for all multi-indices $\alpha\in\N^d$. Note that
\begin{equation*}
  \Mfrak_{T}(\xi) = \frac{\sum_{j=1}^d |\exp(i\xi_j)-1|^2}{{\frac{1}{T}} + \sum_{j=1}^d |\exp(i\xi_j)-1|^2} \Mfrak_0(\xi)
\end{equation*}
and $\Mfrak_0$ is smooth away from the origin so that
\[
 \sup_{\xi\in(-\pi,\pi)^d\setminus (-\frac{1}{L},\frac{1}{L})^d} |\partial_\xi^{\alpha} \Mfrak_0(\xi)| \le C(L,\alpha)
\]
for all multi-indices $\alpha\in\N^d$. Furthermore, we have that
\[
\sup_{\xi\in(-\pi,\pi)^d\setminus (-\frac{1}{L},\frac{1}{L})^d} \frac{1}{{\frac{1}{T}}+\sum_{j=1}^d|\exp(i\xi_j)-1|^2} \le C(d,L)
\]
and 
\[
\partial_\xi^\alpha\left(\frac{\sum_{j=1}^d|\exp(i\xi_j)-1|^2}{{\frac{1}{T}}+\sum_{j=1}^d|\exp(i\xi_j)-1|^2}\right) =
\frac{\phi(\xi)}{({\frac{1}{T}}+\sum_{j=1}^d|\exp(i\xi_j)-1|^2)^k}
\]
for some (generic) smooth function $\phi$ and some $k\ge 0$, both depending only on the multi-index $\alpha$ and $d$. Hence we have that
\[
 \sup_{\xi\in(-\pi,\pi)^d\setminus (-\frac{1}{L},\frac{1}{L})^d}
\bigg| \partial_\xi^{\alpha}\left(\frac{\sum_{j=1}^d|\exp(i\xi_j)-1|^2}{{\frac{1}{T}}+\sum_{j=1}^d|\exp(i\xi_j)-1|^2}\right) \bigg| \le C(L,\alpha).
\]
Since $\alpha$ was arbitrary, estimate \eqref{eq:CZ3} follows from the Leibniz rule.
\qed

\bigskip

\begin{proof}[Proof of Lemma \ref{lemma:weighted-young}]
First we write $|f(x-y)g(y)|$ as
	\[
	|f(x-y)g(y)|
	=
	\underbrace{|f(x-y)|^{\frac qp}|g(y)|^{\frac rp}}_{I}
	\underbrace{|f(x-y)|^{1-\frac qp}}_{II}\underbrace{|g(y)|^{1-\frac rp}}_{III}
	\] 
and apply a H\"older inequality to the terms $I,II$ and $III$ with exponents $p,\frac{pq}{p-q},\frac{pr}{p-r}$ to obtain:
	\[
	\sum_{y\in\Z^d} f(x-y)g(y)
	\le
	\bigg(\sum_{y\in\Z^d}|f(x-y)|^q|g(y)|^r\bigg)^{\frac 1p}
	\bigg(\sum_{y\in\Z^d}|f(x-y)|^q\bigg)^{\frac 1q-\frac 1p}
	\bigg(\sum_{y\in\Z^d}|g(y)|^r\bigg)^{\frac 1r-\frac 1p}.
	\]
Therefore
	\begin{align*}
	\sum_{x\in\Z^d} \Big|\sum_{y\in\Z^d} f(x-y)g(y)\Big|^pw(x)
	&\le
	\bigg(\sum_{x,y\in\Z^d}|f(x-y)|^q|g(y)|^rw(x)\bigg)\|f\|_{\ell^q}^{p-q}\|g\|_{\ell^r}^{p-r}\\
	&\le
	\left(\|f\|^q_{\ell_w^q}\|g\|_{\ell_w^r}^r\right)\|f\|_{\ell^q_w}^{p-q}	\|g\|_{\ell^r_w}^{p-r}\\
	&=
	\textcolor{blue}{\|f\|_{\ell_w^q}^p\|g\|_{\ell_w^r}^p},
	\end{align*}
where in the second inequality we used the assumption \eqref{eq:23}.
\end{proof}

\begin{proof}[Proof of Lemma \ref{lemma:equiv-norms}]
For convenience we set $Q:=(-\tfrac{1}{2},\tfrac{1}{2})^d$ and without
loss of generality we assume that $L\geq 1$.

\textbf{Step 1.} We claim that for all
$z\in\Z^d$ and $1\leq p<\infty$ we have
\begin{align}\label{eq:22}
  \sup_{x\in (z+Q)}|g(x)|&\leq
  C(d,p)\|g\|_{L^p(z+Q)},\\
  \label{eq:24}
  \|g\|_{L^p(z+Q)}&\leq C(d,p)\left(|g(z)|+L^{-1}\|g\|_{L^p(z+Q)}\right).
\end{align}

By translation invariance it suffices to consider $z=0$. Thanks to the
Sobolev embedding of $W^{n,p}(Q)$ into $L^\infty(Q)$ for $n>d$, we get
\begin{equation}\label{eq:20}
  \sup_{x\in Q}|g(x)|\leq
  C(d,n,p)\|g\|_{L^p(Q)}+\|\nabla^ng\|_{L^p(Q)}.
\end{equation}
We argue that the band restriction implies for all $n\geq 1$ that
\begin{equation}\label{eq:21}
  \|\nabla^ng\|_{L^p(Q)}\leq C(d,n)L^{-n}\|g\|_{L^p(Q)},
\end{equation}
which combined with \eqref{eq:20} and $L\geq 1$ yields \eqref{eq:22}.

Estimate \eqref{eq:21} can be seen as follows: Recall that $g=\mathcal
F^{-1}F$ where $F$ is supported in  $[-\tfrac{1}{L},\tfrac{1}{L}]$. Let
$\eta_1$ denote a smooth cutoff function that is one in $[-1,1]^d$ and
compactly supported in $(-2,2)^d$, say. Let $\phi_1:=\mathcal F^{-1}\eta_1$ and note that
for all $L>0$ we have
\[
(\mathcal F^{-1}\eta_L)(x)=\phi_L\qquad\text{where
}\eta_L(\xi):=\eta_1(L\xi)\text{ and }
\phi_L(x):=L^{-d}\phi_1(\tfrac{x}{L}). 
\]
In view of the band restriction of $F$ and its definition we have $g={\mathcal F}^{-1}F={\mathcal F}^{-1}(\eta_L F)
=(2\pi)^\frac{d}{2}{\mathcal F}^{-1}\eta_L*{\mathcal F}^{-1}F=\phi_L*g$. 
We thus obtain the representation $\nabla^ng=\nabla^n(\phi_L*g)=(\nabla^n\phi_L)*g$ with $\nabla^n\phi_L(x)
=L^{-n}\frac{1}{L^d}\nabla^n\phi_1(\frac{x}{L})$, which yields the inequality
\begin{equation}\nonumber
  \|\nabla^ng\|_{L^p}\le\|\nabla^n\phi_L\|_{L^1}\|g\|_{L^p}=L^{-n}\|\nabla^n\phi_1\|_{L^1}\|g\|_{L^p},
\end{equation}
and thus the estimate \eqref{eq:21}, since $\phi_1$ is a Schwartz
function that can be chosen only depending on $d$.

{Estimate \eqref{eq:24} may be seen as follows: A simple application of the mean-value theorem yields
\[
 \left(\int_{Q}|g(x)-g(0)|^p\,dx\right)^{\frac{1}{p}} \leq C(d,p)\sup_{x\in Q}|\nabla g(x)|.
\]
Then the Sobolev embedding~\eqref{eq:20} with $g$ replaced by $\nabla g$ yields
\[
 \left(\int_{Q}|g(x)-g(0)|^p\,dx\right)^{\frac{1}{p}} \leq C(d,n,p)\|\nabla g\|_{L^p(Q)}+\|\nabla^{n+1}g\|_{L^p(Q)}.
\]
Finally, we insert estimate~\eqref{eq:21} (with $n$ replaced by $n+1$) to obtain that
\[
 \left(\int_{Q}|g(x)-g(0)|^p\,dx\right)^{\frac{1}{p}} \leq C(d,n,p)(L^{-1}+L^{-(n+1)})\|g\|_{L^p(Q)},
\]
which easily turns into the desired estimate~\eqref{eq:24} at $z=0$.}

\medskip

\textbf{Step 2.} We claim that there exists $L_0=L_0(d,p)$ such that for all $L\geq L_0$ and $z\in\Z^d$ we have
\begin{equation}
  \label{eq:26}
  \frac{1}{C(d,p,\gamma)}|g(z)|^p(|z|+1)^\gamma\leq\int_{z+Q}|g(x)|^p(|x|+1)^\gamma\,dx\leq C(d,p,\gamma)|g(z)|^p(|z|+1)^\gamma.
\end{equation}
For the argument first note that for all $z\in\Z^d$ and $x\in z+Q$ we have
\begin{equation}\label{eq:25}
  (|z|+1)^\gamma\leq C(d,\gamma)(|x|+1)^\gamma\qquad\text{and}\qquad 
  (|x|+1)^\gamma\leq C(d,\gamma)(|z|+1)^\gamma.
\end{equation}
{Indeed, since $\max_{y\in Q}|y|+1 = \frac{1}{2}\sqrt{d}+1$ we have that
\[
  (|z|+1)^\gamma\leq (|x|+|z-x|+1)^\gamma\leq (|x|+{\textstyle\frac{1}{2}\sqrt{d}+1})^\gamma\leq ({\textstyle\frac{1}{2}\sqrt{d}+1})^\gamma(|x|+1)^\gamma,
\]
and
\[
  (|x|+1)^\gamma\leq (|z|+|x-z|+1)^\gamma\leq ({\textstyle\frac{1}{2}\sqrt{d}+1})^\gamma(|z|+1)^\gamma.
\]
Hence the result~\eqref{eq:22} of Step~1 yields
\[
  |g(z)|^p(|z|+1)^\gamma
  \le\left(\sup_{x\in
      z+Q}|g(x)|\right)^p(|z|+1)^\gamma
  \le C(d,p)\int_{z+Q}|g(x)|^p(|z|+1)^\gamma\,dx
\]
Estimate~\eqref{eq:25} thus yields the desired first inequality
\[
  |g(z)|^p(|z|+1)^\gamma
  \le C(d,p,\gamma)\int_{z+Q}|g(x)|^p(|x|+1)^\gamma\,dx.
\]
For the second estimate in~\eqref{eq:26}, we note that, by absorption,~\eqref{eq:24}  implies existence of $L_0=L_0(d,p)$ such that
\[
  \int_{z+Q}|g(x)|^p\,dx\leq C(d,p)|g(z)|^p
\]
for all $L\geq L_0$.
Hence another application of~\eqref{eq:25} yields as desired
\begin{equation*}
  \int_{z+Q}|g(x)|^p(|x|+1)^\gamma\,dx
  \leq C(d,\gamma)\int_{z+Q}|g(x)|^p\,dx\ (|z|+1)^\gamma
  \leq C(d,p,\gamma)|g(z)|^p(|z|+1)^\gamma
\end{equation*}
 for all $L\geq L_0$.}

\medskip

\textbf{Step 3.} Conclusion:
The estimate $\|g\|_{L^p_\gamma}^p\leq
C(d,p,\gamma)\|g\|_{\ell^p_\gamma}^p$ follows from the second part of \eqref{eq:26} by
summation in $z\in\Z^d$. For the opposite inequality, estimate~\eqref{eq:22} and H\"older's inequality yield
\[
  \frac{1}{C(d,q)}|g(0)|^p \le \left(\int_Q|g|^q\,dx\right)^{\frac{p}{q}}
  \leq\left(\int_Q|g|^p|x|^\gamma\,dx\right)\left(\int_Q|x|^{-\frac{q}{p-q}\gamma}\,dx\right)^{\frac{p-q}{p}}
\]
for all $1\leq q<p$.
Thanks to the assumption $0\leq \gamma<d(p-1)$, we can find $1\leq q<p$
such that the second integral on the right-hand side is finite, so that
\begin{equation}\label{eq:27}
  |g(0)|^p\leq C(d,p,\gamma)\int_Q|g(x)|^p|x|^\gamma\,dx.
\end{equation}
(Note that this is the only place where the upper bound on $\gamma$ is
required.) We conclude by~\eqref{eq:27} and~\eqref{eq:26} that
\begin{align*}
  \|g\|^p_{\ell^p_\gamma}&=\sum_{z\in\Z^d}|g(z)|(|z|+1)^\gamma=|g(0)|+\sum_{z\in\Z^d\setminus\{0\}}|g(z)|^p(|z|+1)^\gamma\\
&\leq C(d,p,\gamma)\left(\int_Q|g(x)|^p|x|^\gamma\,dx+\int_{\R^d\setminus Q}|g(x)|^p(|x|+1)^\gamma\right)\\
&\leq C(d,p,\gamma)\int_{\R^d}|g(x)|^p|x|^\gamma\,dx,
\end{align*}
where in the last line we have used that $|x|+1 \le 3 |x|$ for all $|x| \ge \frac{1}{2}$.
\end{proof}

\section{Applications}\label{sec:app}

We present three  applications of our results. First, for $d\geq 3$ we prove a bound (with optimal scaling) on the $H^1$-error of the discrete two-scale expansion \eqref{eq:11}. Secondly, we consider an approximation of the homogenized coefficients by a spatial average of the energy density associated with the modified corrector and estimate its $p$-variance. 

The third application are \textit{annealed Green's function estimates} in the spirit of \cite{MO1}. More precisely, we present an argument that allows to lift estimates on the first moment of the annealed Green's function (and its gradients) to higher moments. Next to the moment bounds on the corrector and the estimates on the Green's function developed in the previous sections, these \textit{annealed Green's function estimates} play also an important role in the argument for the first two applications.

\subsection{Quantitative two-scale expansion}\label{sec:qtwoscale}
In this paragraph we prove an optimal quantitative two-scale expansion under the following assumptions:
\begin{enumerate}[(a)]
\item $d\geq 3$,
\item $\expec{\cdot}$ is stationary and satisfies LSI,
\item suboptimal decay of the annealed Green's function:
  \begin{equation}\label{ASS:annealed-theta}
    \exists \theta>\frac{d+2}{d}\,:\,\sup_{x\in\Z^d}(|x|+1)^\theta\expec{|\nabla\nabla G(x,0)|}<\infty.
  \end{equation}
\end{enumerate}
Note that (a) and (b) ensure the existence of stationary correctors  $\phi_1,\ldots,\phi_d$ satisfying
\begin{equation*}
  \expec{\phi^2_i}\leq C(d,\lambda,\rho),
\end{equation*}
cf. Corollary \ref{cor:1}.  
Property \eqref{ASS:annealed-theta} is satisfied (with optimal exponent $\theta=d$), if $\nabla^*(a\nabla)$ satisfies a maximum principle (e.g. see \cite{MO2}). We expect \eqref{ASS:annealed-theta} to hold for general elliptic, stationary \& ergodic coefficients. Yet, we could not find a reference.
\medskip

In the following, for $u:\e\Z^d\to\R$ and $F:\e\Z^d\to\R^d$ we set
\begin{eqnarray*}
  \nabla^\e u(x):=(\nabla_1^\e u(x),\ldots,\nabla_d^\e u(x)),\qquad \nabla_i^\e u(x):=\frac{u(x+\e e_i)-u(x)}{\e},\\
  \nabla^{\e*} F(x):=\sum_{i=1}^d\nabla_i^{\e*}F_i(x),\qquad \nabla_i^{\e*}F_i(x):=\frac{F_i(x-\e e_i)-F(x)}{\e}.
\end{eqnarray*}
\begin{proposition}\label{prop:two-scale}
 There exists a constant $C$ such that for all $\mu^2>0$, $f\in \ell^2(\e\Z^d)$ and $\e>0$ the following property holds: Let $u_\e(a;\cdot)$ and $u_{\hom}^\e$ denote the unique solutions in $\ell^2(\e\Z^d)$ to the equations
  \begin{equation}\label{twoscale:eq1}
    \begin{aligned}
      \mu^2 u^\e(a;\cdot)+\nabla^{\e*}(a(\tfrac{\cdot}{\e})\nabla^\e u^\e(a;\cdot))=&f\qquad\text{in }\Z^d,\\
      \mu^2 u_{\hom}^\e+\nabla^{\e*}(a_{\hom}\nabla^\e
      u_{\hom}^\e)=&f\qquad\text{in }\Z^d.
    \end{aligned}
  \end{equation}
  Then the two-scale expansion
  \begin{equation*}
    z^\e(a;x)=u^\e(a;x)-\left(u_{\hom}^\e(x)+\e\phi_i(a;\frac{x}{\e})\nabla^\e_iu_{\hom}^\e(x)\right)
  \end{equation*}
  satisfies
  \begin{equation*}
    \expec{\left(\e^d\sum_{x\in\e\Z^d}|z^\e(x)|^2+|\nabla^\e z^\e(x)|^2\right)^2}^\frac{1}{2}\leq C(1+\mu^2)\e\left(\e^d\sum_{x\in\e\Z^d}|f(x)|^2\right)^\frac{1}{2}.
  \end{equation*}
\end{proposition}
The previous proposition extends the two-scale expansion in \cite{GNO3} in several directions: In \cite{GNO3} the equations in \eqref{twoscale:eq1} are considered on a torus (i.e. with periodic boundary conditions) and it is assumed that the coefficients are diagonal and i.~i.~d..  Recently, in \cite{GNO4} an estimate for the two-scale expansion based on the non-stationary corrector has been obtained in the continuum case.
\medskip

The proof of Proposition~\ref{prop:two-scale} relies on a classical representation of the energy density associated with $z_\e$ with help of the corrector function $\phi_i$ and an additional quantity, which we refer to as the ``flux correctors''. It is associated to the flux differences
\begin{equation*}
  q=(q^1,\ldots,q^d),\qquad
  q^i(a;x):=a(x)(e_i+\nabla\phi_i(a;x))-a_{\hom}e_i.
\end{equation*}
In the deterministic, periodic case, a classical result shows that $q^i$ can be represented as the divergence of a (skew symmetric, matrix-valued) flux corector $\sigma^i$, which in our setting can be defined with the help of the equation
\begin{equation}\label{eq:flux-corrector}
  \nabla^*\nabla  \sigma^i_{\alpha\beta}=\nabla_\alpha q^i_\beta-\nabla_\beta q^j_\alpha\qquad\text{in }\Z^d.
\end{equation}
In the following we show that under assumptions (a) - (c) equation \eqref{eq:flux-corrector} admits a \textit{stationary} solution with \textit{finite second moments}:
\begin{lemma}\label{L:flux-corrector}
For all $i=1,\ldots,d$ and $\expec{\cdot}$-almost every $a\in\Omega$, the equation \eqref{eq:flux-corrector} admits a unique solution $\sigma^i:\Omega\times\Z^d\to\R^{d\times d}$ with $\expec{|\sigma^i|^2}<\infty$ and $\expec{\sigma^i}=0$. Furthermore, it satisfies the following properties:
\begin{enumerate}[(a)]
\item (skew-symmetry). $\sigma^i_{\alpha\beta}=-\sigma^i_{\beta\alpha}$ almost everywhere in $\Omega\times\Z^d$,
\item (representation of $q^i$). $\sum_{\beta=1}^d\nabla^*_\beta\sigma^i_{\alpha\beta}=q^i_\alpha$ almost everywhere in $\Omega\times\Z^d$.
\end{enumerate}
\end{lemma}
We postpone the argument to the end of this section and continue with the proof of the proposition:

\begin{proof}[Proof of Proposition~\ref{prop:two-scale}]
  A direct calculation yields
  \begin{equation*}
    a(\tfrac{\cdot}{\e})\nabla^\e z^\e = a(\tfrac{\cdot}{\e})\nabla^\e u^\e-a_{\hom}\nabla^\e u^\e_{\hom}-(\nabla^\e_i u^\e_{\hom})q^i(\tfrac{\cdot}{\e})-\e a(\tfrac{\cdot}{\e})[\phi_i(\tfrac{\cdot}{\e})]\nabla^\e\nabla_i^\e u^\e_{\hom},
  \end{equation*}
  where the $d\times d$-matrix $[\phi_i(\tfrac{x}{\e})]:=\sum_{j=1}^d\phi_i(\tfrac{x}{\e}+e_j)e_j\otimes e_j$ accounts for the discrete product rule. By appealing to \eqref{twoscale:eq1} we obtain
  \begin{equation*}
    \mu^2z^\e+\nabla^{\e,*}(a(\tfrac{\cdot}{\e})\nabla^\e z^\e) =-\e \nabla^{\e,*}\left(\,(\nabla^\e_i u^\e_{\hom})q^i(\tfrac{\cdot}{\e})+a(\tfrac{\cdot}{\e})[\phi_i(\tfrac{\cdot}{\e})]\nabla^\e\nabla_i^\e u^\e_{\hom}\right)-\e\mu^2\phi_i(\tfrac{\cdot}{\e})\nabla_i^\e u^\e_{\hom},
  \end{equation*}
  and testing with $z^\e$ yields
  \begin{equation}
    \label{P:two-scale:eq2}
    \begin{aligned}
      \sum_{\e\Z^d}\left(\mu^2|z^\e|^2+\lambda |\nabla^\e
        z^\e|^2\right) \leq\ & -\sum_{\e\Z^d}(\nabla^\e_i
      u^\e_{\hom})q^i(\tfrac{\cdot}{\e}) \cdot \nabla^\e
      z^\e\\
      &+\e\sum_{\e\Z^d}a(\tfrac{\cdot}{\e})[\phi_i(\tfrac{\cdot}{\e})]\nabla^\e\nabla^\e_i
      u^\e_{\hom}\cdot \nabla^\e
      z^\e\\
      &-\e\mu^2\sum_{\e\Z^d}\phi_i(\tfrac{\cdot}{\e})\nabla^\e_iu^\e_{\hom}z^\e.
    \end{aligned}
  \end{equation}
  We claim that
  \begin{equation}\label{P:two-scale:eq3}
    \sum_{\e\Z^d}(\nabla^\e_i
      u^\e_{\hom})q^i(\tfrac{\cdot}{\e}) \cdot \nabla^\e
      z^\e=
      \e\sum_{\e\Z^d}\Big((\nabla^{\e,*}_\alpha\nabla^\e_i
    u^\e_{\hom})\sigma^i_{\alpha\beta}(\tfrac{\cdot}{\e}-e_\alpha)\Big)\nabla^{\e}_\beta z^\e
  \end{equation}
  Indeed, thanks to the corrector equation in the form of $\nabla^{\e,*}q^i(\tfrac{\cdot}{\e})=0$ and identity $\e\nabla^{\e,*}\sigma^i(\tfrac{\cdot}{\e})=q^i(\tfrac{\cdot}{\e})$, cf. Lemma~\ref{L:flux-corrector}, we have
  \begin{eqnarray*}
    &&\sum_{\e\Z^d}(\nabla^\e_i
    u^\e_{\hom})q^i(\tfrac{\cdot}{\e}) \cdot \nabla^\e
    z^\e\\
    &=&
    \sum_{\e\Z^d}\nabla^{\e,*}\Big((\nabla^\e_i
    u^\e_{\hom})q^i(\tfrac{\cdot}{\e})\Big)z^\e\\
    &=&
    \sum_{\e\Z^d}(\nabla^\e_i
    u^\e_{\hom})\Big(\nabla^{\e,*}q^i(\tfrac{\cdot}{\e})\Big)z^\e +
    \sum_{\e\Z^d}\Big((\nabla^{\e,*}_\alpha\nabla^\e_i
    u^\e_{\hom})q^i(\tfrac{\cdot}{\e}-e_\alpha)\Big)z^\e\\
    &=&
    \e\sum_{\e\Z^d}\Big((\nabla^{\e,*}_\alpha\nabla^\e_i
    u^\e_{\hom})\nabla^{\e,*}_\beta \sigma^i_{\alpha\beta}(\tfrac{\cdot}{\e}-e_\alpha)\Big)z^\e\\
    &=&
    \e\sum_{\e\Z^d}\nabla^{\e,*}_\beta\Big((\nabla^{\e,*}_\alpha\nabla^\e_i
    u^\e_{\hom})\sigma^i_{\alpha\beta}(\tfrac{\cdot}{\e}-e_\alpha)\Big)z^\e
    -
    \e\sum_{\e\Z^d}\Big((\nabla^{\e,*}_\beta\nabla^{\e,*}_\alpha\nabla^\e_i
    u^\e_{\hom})\sigma^i_{\alpha\beta}(\tfrac{\cdot}{\e}-e_\alpha-e_{\beta})\Big)z^\e
  \end{eqnarray*}
  The last term on the right-hand side vanishes, since $(\nabla^{\e,*}_\beta\nabla^{\e,*}_\alpha\nabla^\e_i
  u^\e_{\hom})$ is symmetric and $\sigma^i_{\alpha\beta}(\tfrac{\cdot}{\e}-e_\alpha-e_\beta)$ is skew-symmetric in $\alpha$ and $\beta$, cf. Lemma~\ref{L:flux-corrector}. This proves \eqref{P:two-scale:eq3}.
  The combination of \eqref{P:two-scale:eq2} and \eqref{P:two-scale:eq3}, and Young's inequality yield for some constant $C(d)>0$ that only depends on $d$:
  \begin{eqnarray*}
    &&\frac{1}{C(d)}\sum_{\e\Z^d}\left(\mu^2|z^\e|^2+\lambda |\nabla^\e
      z^\e|^2\right) \\
    &\leq& \e^2\lambda\sum_{\e\Z^d}\left(\phi_i^2(\tfrac{\cdot}{\e}+e_\alpha)|\nabla^\e_\alpha\nabla^\e_i u^\e_{\hom}|^2+|\sigma^i_{\alpha\beta}(\tfrac{\cdot}{\e}-e_\alpha)|^2|\nabla^{\e,*}_\alpha\nabla^\e_i u^\e_{\hom}|^2\right)\\
    &&+\e^2\mu^2\sum_{\e\Z^d}\phi_i^2(\tfrac{\cdot}{\e})|\nabla_i^\e u^\e_{\hom}|^2\\
    &=& \e^2\lambda\sum_{\e\Z^d}\left((\phi_i^2(\tfrac{\cdot}{\e}+e_\alpha)+|\sigma^i(\tfrac{\cdot}{\e})|^2)|\nabla^\e_\alpha\nabla^\e_i u^\e_{\hom}|^2\right)+\e^2\mu^2\sum_{\e\Z^d}\phi_i^2(\tfrac{\cdot}{\e})|\nabla_i^\e u^\e_{\hom}|^2.
  \end{eqnarray*}
  Since $u^\e_{\hom}$ is deterministic and because the correctors $\phi_i$ and $\sigma^i$ are stationary, taking the expectation yields
  \begin{eqnarray*}
    &&\frac{1}{C(d)}\expec{\sum_{\e\Z^d}\left(\mu^2|z^\e|^2+\lambda |\nabla^\e
        z^\e|^2\right)} \\
    &\leq& \e^2\expec{\sum_i|\phi_i|^2+|\sigma^i|^2}\left(\sum_{\e\Z^d}\lambda|\nabla^\e\nabla^\e u^\e_{\hom}|^2+\mu^2|\nabla^\e u^\e_{\hom}|^2\right).
  \end{eqnarray*}

  The first term is bounded thanks to $\expec{|\phi_i|^2}<\infty$ and Lemma~\ref{L:flux-corrector}. The second term is estimated by $C(d,\lambda)(1+\mu^2)\sum_{\e\Z^d}f^2$ as can be seen by standard discrete-$H^2$-regularity for the constant coefficient operator $(\mu^2+\nabla^*a_{\hom}\nabla)$.
\end{proof}

\begin{proof}[Proof of Lemma~\ref{L:flux-corrector}]
  \step 1 Existence.

  For $T>0$ let $\sigma_{T,\alpha,\beta}^i$ denote the unique bounded solution to 
  \begin{equation}\label{L:flux:eq1}
    \frac{1}{T}\sigma_{T,\alpha\beta}^i+\nabla^*\nabla\sigma_{T,\alpha\beta}^i=\nabla_\alpha q^i_\beta-\nabla_\beta q^i_\alpha.
  \end{equation}
  Since $q^i$ is stationary and has vanishing expectation, $\sigma_{T,\alpha,\beta}$ is stationary and has vanishing expectation. 
  As shown below in Step~3, we have
  \begin{equation}\label{L:flux:eq3}
    \sup_{T\geq 1}\expec{|\sigma^i_{T,\alpha\beta}|^2}<\infty,
  \end{equation}
  and thus we recover the sought for solution $\sigma^i_{\alpha\beta}$ in the limit $T\uparrow\infty$.
  \medskip

  \step 2 Properties (a) and (b).

  Property (a) is obvious, since the right-hand side of the equation defining $\sigma$ is skew-symmetric. We prove (b) and first claim that
  \begin{equation}\label{eq:two-scale-1}
    \nabla^*\nabla\left(\nabla^*_\beta\sigma_{\alpha\beta}^i-q_\alpha^i\right)=0.
  \end{equation}
  Indeed, since $\nabla^*q^i=0$ (by the corrector equation), the claim follows from identity
  \begin{eqnarray*}
    \nabla^*\nabla\nabla^*_\beta\sigma_{\alpha\beta}^i&=&\nabla^*_\beta\nabla^*\nabla\sigma_{\alpha\beta}^i=\nabla^*_\beta\nabla_\alpha q^i_\beta-\nabla^*\beta\nabla_\beta q_\alpha^i\\
    &=&\nabla_\alpha\nabla^*q^i-\nabla^*\nabla q_\alpha^i,
  \end{eqnarray*}
  which implies that $\zeta=q^i_\alpha-\nabla^*_\beta\sigma_{\alpha\beta}^i=0$ (since $\zeta$ is stationary, $\expec{\zeta}=0$ and the kernel of $\nabla^*\nabla$ restricted to this class of functions is trivial).
  \medskip

  \step 3 Proof of \eqref{L:flux:eq3}.

  We first notice that $\sigma_{T,\alpha\beta}^i$ admits the Green's function representation
  \begin{equation}\label{L:flux:eq2}
    \sigma_{T,\alpha\beta}^i(a;0)=\sum_{z\in\Z^d}\left(\nabla_\alpha^*G^0_T(z)q^i_\beta(z)-\nabla_\beta^*G^0_T(z)q^i_\alpha(z)\right),
  \end{equation}
  where $G^0_T$ denotes the Green's function associated with the operator $\frac{1}{T}+\nabla^*\nabla$. Since $\expec{\sigma^i_{T,\alpha\beta}}=0$, (SG) yields
  \begin{equation*}
    \expec{|\sigma^i_{T,\alpha,\beta}|^2}\leq\frac{1}{\rho}\expec{\sum_{x\in\Z^d}(\osc_{a(x)}\sigma^i_{T,\alpha\beta})^2},
  \end{equation*}
  Hence, we only need to argue that 
  \begin{equation}\label{L:flux:eq4}
    \sup_{T\geq 1}\expec{\sum_{x\in\Z^d}(\osc_{a(x)}\sigma^i_{T,\alpha\beta})^2}<\infty.
  \end{equation}
  From \eqref{T1.1} (which we apply in the limit $T\to\infty$) and the definition of $q^i$ we deduce that
  \begin{equation*}
    \osc_{a(x)}q^i(a;z)\leq C(d,\lambda)\left(\delta(x-z)+|\nabla\nabla G(a;z,x)|\right)|\nabla\phi_i(a;x)+e_i|.
  \end{equation*}
  Combined with \eqref{L:flux:eq2} and the pointwise estimate $|\nabla^*G_T^0(z)|\leq C(d)(|z|+1)^{1-d}$ on the constant-coefficient Green's function, we get
  \begin{multline*}
    |\osc_{a(x)}\sigma_{T,\alpha,\beta}(0)|^2\leq C(d,\lambda)\left(\sum_{z\in\Z^d}(|z|+1)^{(1-d)}\left(\delta(x-z)+|\nabla\nabla G(a;z,x)|\right)|\nabla\phi_i(a;x)+e_i|\right)^2,
  \end{multline*}
  and thus
  \begin{eqnarray*}
    \expec{\sum_{x\in\Z^d}(\osc_{a(x)}\sigma_{T,\alpha\beta})^2}&\leq& C(d,\lambda)\Bigg(\sum_{x\in\Z^d}(|x|+1)^{2(1-d)}\expec{|\nabla\phi_i+e_i|^2}\\
    &&+\sum_{x\in\Z^d}\expec{|\nabla\phi_i(x)+e_i|^2\left(\sum_{z\in\Z^d}(|z|+1)^{(1-d)}|\nabla\nabla G(z,x)|\right)^2}\Bigg).
  \end{eqnarray*}
  Since $2(1-d)<-d$, the first term on the right-hand side is finite. We estimate the second term. By stationarity and the identities $\nabla\nabla G(a;z,x)=\nabla\nabla G(a(\cdot+x);z-x,0)$ and $\phi^i(a,x)=\phi^i(a(\cdot+x),0)$ we have
  \begin{eqnarray*}
    &&\sum_{x\in\Z^d}\expec{|\nabla\phi_i(x)+e_i|^2\left(\sum_{z\in\Z^d}(|z|+1)^{(1-d)}|\nabla\nabla G(z,x)|\right)^2}\\
    &=&
    \expec{|\nabla\phi_i(0)+e_i|^2\sum_{x\in\Z^d}\left(\sum_{z\in\Z^d}(|z|+1)^{(1-d)}|\nabla\nabla G(z-x,0)|\right)^2}
  \end{eqnarray*}
  Since $\theta>\frac{d+2}{d}$, cf. \eqref{ASS:annealed-theta}, we can find exponents $p,q$ such that
  \begin{equation*}
    \frac{d}{\theta}<p<\frac{2d}{d+2},\qquad q>\frac{d}{d-1},\qquad
    1+\frac{1}{2}=\frac1q+\frac1p.
  \end{equation*}
  H\"older's inequality (with exponents $(\frac{p}{2},\frac{p}{p-2})$) and Young's convolution estimate yield
  \begin{eqnarray*}
    &&\expec{|\nabla\phi_i(0)+e_i|^2\sum_{x\in\Z^d}\left(\sum_{z\in\Z^d}(|z|+1)^{(1-d)}|\nabla\nabla G(z-x,0)|\right)^2}\\
    &\leq&\expec{|\nabla\phi_i+e_i|^{\frac{2p}{p-2}}}^{\frac{p-2}{2}}
    \expec{\left(\sum_{x\in\Z^d}\left(\sum_{z\in\Z^d}(|z|+1)^{(1-d)}|\nabla\nabla G(z-x,0)|\right)^2\right)^{\frac{p}{2}}}^{\frac{2}{p}}\\
    &\leq&\expec{|\nabla\phi_i+e_i|^{\frac{2p}{p-2}}}^{\frac{p-2}{2}}\left(\sum_{x\in\Z^d}(|x|+1)^{q(1-d)}\right)^{\frac{2}{q}}
    \expec{\sum_{x\in\Z^d}|\nabla\nabla G(z-x,0)|^{2p}}
  \end{eqnarray*}
  Note that the choice of $q$ implies
  \begin{equation*}
    \sum_{x\in\Z^d}(|x|+1)^{q(1-d)}<\infty.
  \end{equation*}
  Furthermore, from \eqref{ASS:annealed-theta}, Proposition~\ref{P:annealed} (which we apply with the weight $\omega(r):=(|r|+1)^\theta$) and the choice of $p$, we learn that
  \begin{equation*}
    \expec{\sum_{x\in\Z^d}|\nabla\nabla G(z-x,0)|^{p}}\leq\left(\sup_{y\in\Z^d}\expec{|\nabla\nabla G(y,0)|^{p}}(|y|+1)^{p\theta}\right)\sum_{x\in\Z^d}(|z-x|+1)^{-p\theta }<\infty.
  \end{equation*}
  By Theorem~\ref{T1} any moment of $|\nabla\phi^i+e_i|$ is finite and thus \eqref{L:flux:eq4} follows.
\end{proof}

\subsection{Quantitative approximation of the homogenized coefficients}
Fix unit vectors $\xi,\xi^*\in\R^d$ and $T>0$. Let $\phi(a;x)$ (resp. $\phi^*(a;x)$) denote the modified correctors (resp. adjoint corrector) associated with $\xi$ (resp. with $\xi^*$):
\begin{align*}
  &\frac{1}{T}\phi(a;\cdot)+\nabla^*(a(\nabla\phi(a;\cdot)+\xi))=0\\
  &\frac{1}{T}\phi^*(a;\cdot)+\nabla^*(a^*(\nabla\phi^*(a;\cdot)+\xi^*))=0
\end{align*}
We introduce the (stationary) energy density
\begin{equation*}
  \mathcal E_T(a;x):=\frac{1}{T}\phi^*(a;x)\phi(a;x)+(\nabla\phi^*(a;x)+\xi^*)\cdot a(\nabla\phi(a;x)+\xi).
\end{equation*}
It approximates $a_{\hom}$ in the sense that 
\begin{equation*}
  \xi^*\cdot a_{\hom}\xi=\lim\limits_{T\to\infty}\expec{\mathcal E_T},
\end{equation*}
Furthermore, thanks to ergodicity, the average $\expec{\mathcal E_T}$ can be approximated by a spatial average: Let $\eta$ denote a smooth, non-negative function with support in $(-1,1)^d$  and $\int_{(-1,1)}\eta=1$. For $L\gg 1$ set $\eta_L(x):=L^{-d}\eta(\frac{\cdot}{L})$ and define
\begin{equation*}
  \mathcal E_{T,L}(a):=\sum_{x\in\Z^d}\eta_L(x)\mathcal E_T(a;x).
\end{equation*}
Then Birkhoff's individual ergodic theorem yields
\begin{equation*}
  \expec{\mathcal E_T}=\lim\limits_{L\to\infty}\mathcal E_{T,L}(a)\qquad\text{$\expec{\cdot}$-almost surely.}
\end{equation*}
In this section we prove a rate for this convergence.
\begin{proposition}\label{P:4}
  Let $d\geq 2$. Suppose that $\expec{\cdot}$ is stationary and satisfies (SG), then for all $1\leq p<\infty$ we have
  \begin{equation*}
    \expec{|\mathcal E_{T,L}-\expec{\mathcal E_{T}}|^{2p}}^{\frac{1}{2p}}\leq C(d,\lambda,\rho,p)(\mu_{T,L}+1)L^{-\frac{d}{2}}.
  \end{equation*}
  where
  \begin{equation*}
    \mu_{T,L}:=L^{-\frac{d+2}{2}}\left(\sum_{z\in\Z^d}\left(\sum_{|x|\leq L}\expec{|\nabla_z G_T(z-x,0)|^{4p}}^{\frac{1}{4p}}\right)^2\right)^{\frac{1}{2}}
  \end{equation*}
\end{proposition}

\begin{remark}
  If the annealed Green's function satisfies the \textit{Delmotte-Deuschel bounds}
  \begin{equation*}
    \expec{|\nabla G(x,0)|}\leq C(|x|+1)^{1-d},\qquad \expec{|\nabla_x\nabla_y G(x,y)|}\leq C(|x-y|+1)^{-d},
  \end{equation*}
  (see discussion in the next section), then an application of Proposition~\ref{P:annealed} shows that 
  \begin{equation*}
    \mu_{T,L}\lesssim 
    \begin{cases}
      \ln(1+\frac{L}{\sqrt{T+1}})&d=2,\\
      1&d>2.
    \end{cases}
  \end{equation*}
  Combined with Proposition~\ref{P:4} we recover the optimal scaling in $L$ for the $2p$-variance of $\mathcal E_{T,L}$.
\end{remark}

\begin{proof}[Proof of Proposition~\ref{P:4}]
In the following we write $\lesssim$ if $\leq$ holds up to a constant only depending on $d,\lambda,\rho,p$ and $\eta$.

\step 1 
We claim that
\begin{eqnarray*}
  \osc_{a(z)}\mathcal E_{T,L}(a)
  &\lesssim&
  \left(\sum_{x\in\Z^d}|\nabla\eta_L(x)| H(a;x,z)\right)+|\eta_L(z)|(|\nabla\phi^*(a;z)|+1)(|\nabla\phi(a;z)|+1),
\end{eqnarray*}
where
\begin{multline*}
  H(a;x,z):=|\nabla_z G(a;z,x)|(|\nabla\phi^*(a;z)|+1)(|\nabla\phi(a;x)|+1) \\
  + |\nabla_z G(a;x,z)|(|\nabla\phi(a;z)|+1)(|\nabla\phi^*(a;x)|+1).
\end{multline*}
There are three main ingredients needed to show this assertion. First, we  show that for  two arbitrary coefficient fields $a$ and $\tilde a$  that only differ at some fixed $z\in\Z^d$, the difference $ \mathcal E_{T,L}(\widetilde{a})-\mathcal E_{T,L}({a})$ satisfies
\begin{align}
  \mathcal E_{T,L}(\widetilde{a})-\mathcal E_{T,L}({a})
  &=
  \sum_{x\in\Z^d}(\phi^*(\tilde a;x)- \phi^*(a;x))\nabla\eta_L\cdot\widetilde{a}(\xi+\nabla\phi(\tilde a;x))\notag\\
  &-\sum_{x\in\Z^d} (\phi(a;x)-\phi(\tilde a;x))\nabla\eta_L\cdot{a}(\xi^*+\nabla\phi^*(\tilde a;x)) \label{energy-difference}\\
  &+\eta_L(z)(\xi^*+\nabla\phi^*(a;z)\cdot(\widetilde{a}(z)-a(z))(\xi+\nabla\phi(\tilde a;z)),\notag
\end{align}
then we show that one can replace $\nabla\phi(\tilde a;x)$ by $\nabla\phi(a;x)$, and finally we estimate the difference $\phi^*(\tilde a;x)- \phi^*(a;x)$. Let us first derive the expression \eqref{energy-difference}. For brevity we write $\widetilde\phi$ for $\phi$ evaluated at $\widetilde{a}$. Then we have
\begin{align*}
  \mathcal E_{T,L}(\widetilde{a})-\mathcal E_{T,L}(a)
  &=
  \sum_{x\in\Z^d}\eta_L(x)\Bigg[\frac{1}{T}\left(\widetilde{\phi}^*\widetilde{\phi}-\phi^*\phi\right)\\
  &\qquad\qquad\qquad+(\nabla\widetilde\phi^*+\xi^*)\cdot \widetilde{a}(\nabla\widetilde\phi+\xi)-(\nabla\phi^*+\xi^*)\cdot a(\nabla\phi+\xi)\Bigg]\\
  &=
  \sum_{x\in\Z^d}\eta_L(x)\Bigg[\frac{\widetilde{\phi}}{T}\left(\widetilde{\phi}^*-\phi^*\right)+\nabla(\widetilde\phi^*-\phi^*)\cdot \widetilde{a}(\nabla\widetilde\phi+\xi)\\
  &\qquad\qquad\qquad-\frac{\phi^*}{T}\left({\phi}-\widetilde\phi\right)-(\nabla\phi^*+\xi^*)\cdot a\nabla(\phi-\widetilde\phi)\\
  &\qquad\qquad\qquad+(\nabla\phi^*+\xi^*)\cdot (\widetilde{a}-a)(\nabla\widetilde\phi+\xi)\Bigg]\\
  &=
  \sum_{x\in\Z^d}\eta_L(x)\Bigg[\frac{\widetilde{\phi}}{T}\left(\widetilde{\phi}^*-\phi^*\right)+(\widetilde\phi^*-\phi^*)\nabla^* \widetilde{a}(\nabla\widetilde\phi+\xi)\\
  &\qquad\qquad\qquad-\frac{\phi^*}{T}\left({\phi}-\widetilde\phi\right)-(\nabla\phi^*+\xi^*) \nabla^*a(\phi-\widetilde\phi)\Bigg]\\
  &\qquad+\eta_L(z)(\nabla\phi^*(z)+\xi^*)\cdot (\widetilde{a}(z)-a(z))(\nabla\widetilde\phi(z)+\xi)\\
  &\qquad+\sum_{x\in\Z^d}\nabla\eta_L(x)\Bigg[(\widetilde\phi^*-\phi^*) \cdot\widetilde{a}(\nabla\widetilde\phi+\xi)-(\phi-\widetilde\phi) \cdot a(\nabla\phi^*+\xi^*)\Bigg],
\end{align*}
which gives \eqref{energy-difference}. A straightforward estimate of \eqref{energy-difference} yields
\begin{align}
 | \mathcal E_{T,L}(\widetilde{a})-\mathcal E_{T,L}({a})|
  &\lesssim
  \sum_{x\in\Z^d}\left(\osc_{a(z)}\phi^*(a;x)\right)|\nabla\eta_L|(|\nabla\phi(\tilde a;x)|+1)\notag\\
  &\qquad +\sum_{x\in\Z^d}\left(\osc_{a(z)}\phi(a;x)\right)|\nabla\eta_L|(|\nabla\phi^*(a;x)|+1) \label{energy-difference-estimate}\\
  &\qquad+|\eta_L(z)|(1+|\nabla\phi^*(a;z)|)(|\nabla\phi(\tilde a;z)|+1).\notag
\end{align}
Now, by Lemma \ref{L:RP} and Lemma \ref{L2} we have the estimates
  \begin{equation*}
  |\nabla\phi(\tilde a;x)-\nabla\phi(a;x)|\leq 2|\nabla\nabla G(\tilde a;x,z)||\nabla\phi(a;z)+\xi|\lesssim |\nabla\phi(a;z)|+1
  \end{equation*}
and
  \begin{equation*}
        \osc_{a(z)} \phi(a;x)  \leq C(d,\lambda) | \nabla_z G(a;x,z)| (|\nabla \phi(a;z)| + 1).
  \end{equation*}
Moreover, by the symmetric properties of the Green's function we have
  \begin{align*}
        \osc_{a(z)} \phi^*(a;x) & \leq C(d,\lambda) | \nabla_z G(a^*;x,z)| (|\nabla \phi^*(a;z)| + 1) \\&= C(d,\lambda) | \nabla_z G(a;z,x)| (|\nabla \phi^*(a;z)| + 1).
  \end{align*}
Using these estimates in \eqref{energy-difference-estimate} we have
\begin{align*}
 | \mathcal E_{T,L}(\widetilde{a})-\mathcal E_{T,L}({a})|
  &\lesssim
  \sum_{x\in\Z^d}| \nabla_z G(a;z,x)| (|\nabla \phi^*(a;z)| + 1)|\nabla\eta_L|(|\nabla\phi(a;x)|+1)\\
  & \qquad+\sum_{x\in\Z^d}| \nabla_z G(a;x,z)| (|\nabla \phi(a;z)| + 1)|\nabla\eta_L|(|\nabla\phi^*(a;x)|+1) \\
  &\qquad+|\eta_L(z)|(|\nabla\phi^*(a;z)|+1)(|\nabla\phi(a;z)|+1)
\end{align*}
which (after taking the supremum on the left hand side) proves the assertion of Step 1.

\step 2 Conclusion.

Thanks to the spectral gap inequality, we have
\begin{equation*}
  \expec{|\mathcal E_{T,L}-\expec{\mathcal E_{T,L}}|^{2p}}\lesssim\expec{\left(\sum_{z\in\Z^d}(\osc_{a(z)}\mathcal E_{T,L})^2\right)^p}.
\end{equation*}
By Step 1 we have
\begin{eqnarray*}
  &&  \expec{\left(\sum_{z\in\Z^d}(\osc_{a(z)}\mathcal E_{T,L})^2\right)^p}\\
  &\lesssim  &
  \expec{\left(\sum_{z\in\Z^d}\left(\sum_{x\in\Z^d}|\nabla\eta_L(x)| H(x,z)\right)^2\right)^p}
  +\expec{\left(\sum_{z\in\Z^d}|\eta_L(z)|^2(|\nabla\phi^*(a;z)|+1)(|\nabla\phi(a;z)|+1)^2\right)^p}.
\end{eqnarray*}
We estimate the second term by appealing Jensen's inequality in the form of $(\sum\eta_L g)^p\leq \sum_z\eta_L |g|^p$ and the boundedness of moments of $\nabla\phi$ and $\nabla\phi^*$:
\begin{eqnarray*}
  &&\expec{\left(\sum_{\Z^d}|\eta_L|^2(|\nabla\phi^*|+1)(|\nabla\phi|+1)^2\right)^p}\\
  &\leq&\sum_{\Z^d}|\eta_L||\eta_L|^p\expec{(|\nabla\phi^*|+1)^{p}(|\nabla\phi|+1)^{2p}}\\
  &\leq&C(d,\lambda,\rho,p)\|\eta_L\|_{\ell^{\infty}}^p\sum_{\Z^d}|\eta_L| \leq C(d,\lambda,\rho,p)\|\eta\|_{\ell^{\infty}}L^{-pd},
\end{eqnarray*}
the latter holds, since $\eta_L=L^{-d}\eta(\frac{\cdot}{L})$. Next, we estimate the first term starting with expanding the power and an application of H\"older's inequality in probability with exponents $(2p,2p,\frac{p}{p-1})$:
\begin{eqnarray*}
  A&:=&\expec{\left(\sum_{z\in\Z^d}\left(\sum_{x\in\Z^d}|\nabla\eta_L(x)| H(x,z)\right)^2\right)^p}\\
  &=&
  \sum_{z'\in\Z^d}\expec{\left(\sum_{x'\in\Z^d}|\nabla\eta_L(x')| H(x',z')\right)^2\left(\sum_{z\in\Z^d}\left(\sum_{x\in\Z^d}|\nabla\eta_L(x)| H(x,z)\right)^2\right)^{p-1}}\\
  &=&
  \sum_{z'\in\Z^d}\sum_{x'\in\Z^d}\sum_{x''\in\Z^d}|\nabla\eta_L(x')||\nabla\eta_L(x'')| \expec{H(x',z') H(x'',z')\left(\sum_{z\in\Z^d}\left(\sum_{x\in\Z^d}|\nabla\eta_L(x)| H(x,z)\right)^2\right)^{p-1}}\\
  &\leq&
  \sum_{z'\in\Z^d}\sum_{x'\in\Z^d}\sum_{x''\in\Z^d}|\nabla\eta_L(x')||\nabla\eta_L(x'')| \expec{|H(x',z')|^{2p}}^{\frac{1}{2p}}\expec{|H(x'',z')|^{2p}}^{\frac{1}{2p}}\, A^{\frac{p-1}{p}}\\
\end{eqnarray*}

Division by $A^{\frac{p-1}{p}}$ yields
\begin{eqnarray*}
  A&\leq&\left(\sum_{z\in\Z^d}\left(\sum_{x\in\Z^d}|\nabla\eta_L(x)| \expec{|H(x,z)|^{2p}}^{\frac{1}{2p}}\right)^{2}\right)^p
\end{eqnarray*}
Since all moments of $\nabla\phi$ and $\nabla\phi^*$ are finite, we deduce that
\begin{equation*}
  \expec{|H(a;x,z)|^{2p}}^{1/2p}\leq C(d,\lambda,\rho,p)\expec{|\nabla G_z(x-z,0)|^{4p}}^{\frac{1}{4p}},
\end{equation*}
and since $|\nabla\eta_L(x)|\leq L^{-(d+1)}|\nabla\eta(\frac{\cdot}{L})|$ we get
\begin{equation*}
  A\lesssim L^{-2p(d+1)}\left(\sum_{z\in\Z^d}\left(\sum_{|x|\leq L} \expec{|\nabla G(x-z,0)|^{4p}}^{\frac{1}{4p}}\right)^{2}\right)^p=L^{-pd}\mu_{T,L}^{2p}.
\end{equation*}
\end{proof}

\subsection{Annealed Green's function estimates}\label{S:annealed:green}
Let $G^0$ denote the Green's function associated with the discrete Laplacian $\nabla^*\nabla$. We have
\begin{equation*}
  |\nabla G^0(x)|\leq C(d)(|x|+1)^{1-d},\qquad |\nabla_x\nabla_y G^0(x-y)|\leq C(d)(|x-y|+1)^{-d}.
\end{equation*}
It is well-known that both estimates do not hold for heterogeneous coefficients $a\in\Omega$ with a constant that only depends on $d$ and $\lambda$. However, if $\expec{\cdot}$ is a stationary ensemble \textit{on diagonal, uniformly elliptic matrices}, then the following \textit{Delmotte-Deuschel-bounds} hold (cf. \cite{Delmotte-Deuschel}):
\begin{equation}\label{del-deu}
  \expec{|\nabla G(a;x,0)|}\leq C(|x|+1)^{1-d},\qquad \expec{|\nabla_x\nabla_y G(a;x,y)|}\leq C(|x-y|+1)^{-d},
\end{equation}
with $C=C(d,\lambda)$.
Recently, under the additional assumption that $\expec{\cdot}$ satisfies LSI, one of the authors and Otto showed in \cite{MO1} that \eqref{del-deu0} can be lifted from first to arbitrary moments $p<\infty$, i.e.
\begin{equation}\label{del-deu-p}
  \expec{|\nabla G(a;x,0)|^p}^{\frac{1}{p}}\leq C(|x|+1)^{1-d},\quad \expec{|\nabla_x\nabla_y G(a;x,y)|^{p}}^{\frac{1}{p}}\leq C(|x-y|+1)^{-d}
\end{equation}
with $C=C(d,\lambda,\rho,p)$. In this section we extend this lifting argument to (non-diagonal), general elliptic coefficients. The main difficulty is to circumvent arguments that rely on the maximum principle.
As we explain below, both \eqref{del-deu} \& \eqref{del-deu-p} are obtained in \cite{Delmotte-Deuschel} and \cite{MO1} by arguments that crucially rely on the restriction to diagonal matrices (which yields a maximum principle): As explained in \cite{MO1} estimate \eqref{del-deu} easily follows from an analogous statement on the heat kernel due to  Delmotte and Deuschel \cite{Delmotte-Deuschel}. They start with heat kernel upper bounds (that rely on the maximum principle) for the semigroup generated by $\nabla^*(a\nabla)$. In \cite{MO2}  an alternative argument is given that is purely based on elliptic regularity theory (yet, in the discrete case their argument makes explicit use of diagonality).

\begin{proposition}\label{P:annealed}
  Let $d\geq 2$ and suppose that $\expec{\cdot}$ is stationary and satisfies LSI. Let
  $\omega:(0,\infty)\to(0,\infty)$ denote a monotonically increasing weight satisfying
  \begin{equation*}
    \Lambda_0:=\sup_{r\geq 0}\left(\frac{\omega(r)}{\omega(\frac{1}{2}r)}\right)<\infty.
  \end{equation*}
  For $1\leq p<\infty$ define
  \begin{eqnarray*}
    \Lambda_{1,p}:=\sup_{y\in\Z^d}\big(\omega(|y|)\expec{|\nabla G_T(y,0)|^{p}}^\frac{1}{p}\big),\qquad
    \Lambda_{2,p}:=\sup_{y\in\Z^d}\big(\omega(|y|)\expec{|\nabla\nabla G_T(y,0)|^p}^\frac{1}{p}\big).
  \end{eqnarray*}
  Then for any $1\leq p<\infty$ there exists $C=C(d,\lambda,\rho,p,\Lambda_0)<\infty$ s.t.
  \begin{eqnarray*}
    \Lambda_{2,p}\leq C\Lambda_{2,1},\qquad \Lambda_{1,p} \leq C\left(\Lambda_{1,1}+\Lambda_{2,1}\right)
  \end{eqnarray*}
  for all $T\geq 1$.
\end{proposition}

The proof closely follows the argument in \cite{MO1}, which relies on a quenched regularity statement (see \cite[Lemma 6]{MO1}), whose proof uses the maximum principle and appeals to De Giorgi-Nash-Moser regularity theory. We replace the argument by the following statement:
\begin{lemma}\label{L:green}
  There exists $q_0>1$ and $\alpha_0>1$ only depending on $\lambda$ and $d$ such that for any $(q,\alpha)\in[1,q_0]\times[0,\alpha_0]$ we have
  \begin{eqnarray*}
    \sup_{a\in\Omega}\sum_{x\in\Z^d}\big(|\nabla_x\nabla_y G(a;x,y)|(|x-y|+1)^\alpha\big)^{2q}<\infty,\\
  \end{eqnarray*}
  and if $\frac{2q}{2q-1}(1+\alpha)<d$ in addition:
  \begin{equation*}
    \sup_{a\in\Omega}\sum_{x\in\Z^d}\big(|\nabla G(a;x,0)|(|x|+1)^\alpha\big)^{2q}<\infty,
  \end{equation*}
\end{lemma}
The proof of Lemma~\ref{L:green} basically follows  the perturbation argument in the proof of estimate \eqref{eq:L2}. We leave it to the reader.

\begin{proof}[Proof of Proposition~\ref{P:annealed}]
It suffices to consider the case when $\Lambda_{2,1}$ (resp. $\Lambda_{1,1}$ and $\Lambda_{2,1}$) are finite, since otherwise the estimates are trivial.
Fix $\alpha\in(0,\alpha_0]$ and set $\tilde\omega(z):=(|z|+1)^{\alpha}$. W.~l.~o.~g. we may assume that $p>1$ is so large such that $q=\frac{p}{p-1}\leq p_0$ and $p\alpha>1$. Note that this implies $\frac{2q}{2q-1}(\alpha+1)<d<2p\alpha$. Hence, Lemma~\ref{L:green} yields the (deterministic) regularity estimate
\begin{equation}\label{P:annealed:1}
  \sup_{a\in\Omega}\sum_{z\in\Z^d}(|\nabla G(z,0)|\tilde\omega(z))^{2q}+  \sup_{a\in\Omega}\sum_{z\in\Z^d}(|\nabla G(z,0)|\tilde\omega(z))^{2q} + \sum_{z\in\Z^d}\tilde\omega(z)^{-2p}<\infty.
\end{equation}

\step 1

We claim that there exists a constant $C_0=C(d,\lambda,p)<\infty$ s.t. for all $x\in\Z^d$:
\begin{eqnarray*}
  \expec{\left(\sum_{z\in\Z^d}|\nabla\nabla G(x,z)|^2|\nabla\nabla G(z,0)|^2\right)^p}^{\frac{1}{2p}} &\leq& C_0\, \omega(|x|)^{-1}\Lambda_0\Lambda_{2,2p},\\
  \expec{\left(\sum_{z\in\Z^d}|\nabla\nabla G(x,z)|^2|\nabla G(z,0)|^2\right)^p}^{\frac{1}{2p}} &\leq& C_0\, \omega(|x|)^{-1}\Lambda_0\left(\Lambda_{1,2p} + \Lambda_{2,2p}\right).
\end{eqnarray*}
The argument for both estimates is similar. We only discuss the second estimate, which is a bit more difficult, and start with a deterministic estimate that follows from two applications of  H\"older's inequality and \eqref{P:annealed:1}:
\begin{eqnarray*}
  &&\sum_{z\in\Z^d}|\nabla \nabla G(x,z)|^2|\nabla G(z,0)|^2 \\
  &\leq&  \sum_{z:|z|\leq \frac{1}{2}|x|}|\nabla \nabla G(x,z)|^2|\nabla G(z,0)|^2 
  + \sum_{z:|z|> \frac{1}{2}|x|}|\nabla \nabla G(x,z)|^2|\nabla G(z,0)|^2\\
  &\leq&\left(\sum_{z:|z|\leq \frac{1}{2}|x|}|\nabla \nabla G(x,z)|^{2p}\tilde\omega(z)^{-2p}\right)^{\frac{1}{p}}\left(\sum_{z\in\Z^d}\left(|\nabla G(z,0)|\tilde\omega(z)\right)^{2q}\right)^{p-1}\\
    &&+\left(\sum_{z\in\Z^d}\left(|\nabla \nabla G(x,z)|\tilde\omega(x-z)\right)^{2q}\right)^{p-1}\left(\sum_{z:|z|>\frac{1}{2}|x|}|\nabla G(z,0)|^{2p}\tilde\omega(x-z)^{-2p}\right)^{\frac{1}{p}}\\
    &\stackrel{\eqref{P:annealed:1}}{\lesssim}&\left(\sum_{z:|z|\leq \frac{1}{2}|x|}|\nabla \nabla G(x,z)|^{2p}\tilde\omega(z)^{-2p}\right)^{\frac{1}{p}}+\left(\sum_{z:|z|>\frac{1}{2}|x|}|\nabla G(z,0)|^{2p}\tilde\omega(x-z)^{-2p}\right)^{\frac{1}{p}}.
\end{eqnarray*}
Taking the $p$th moment and smuggling in the weight $\omega$ yields
\begin{eqnarray*}
  &&\expec{\left(\sum_{z\in\Z^d}|\nabla \nabla G(x,z)|^2|\nabla G(z,0)|^2\right)^p}\\
  &\lesssim&\sum_{z:|z|\leq \frac{1}{2}|x|}\expec{|\nabla \nabla G(x,z)|^{2p}}\tilde\omega(z)^{-2p} + \sum_{z:|z|>\frac{1}{2}|x|}\expec{|\nabla G(z,0)|^{2p}}\tilde\omega(x-z)^{-2p}\\
  &\leq&\left(\sup_{z} \expec{|\nabla \nabla G(x,z)|^{2p}} \omega(|x-z|)^{2p}\right)
  \sum_{z:|z|\leq \frac{1}{2}|x|}\omega(|x-z|)^{-2p}\tilde\omega(z)^{-2p}
  \\
  && + 
  \left(\sup_{z} \expec{|\nabla G(z,0)|^{2p}} \omega(|z|)^{2p}\right)
  \sum_{z:|z|>\frac{1}{2}|x|}\omega(|z|)^{-2p}\tilde\omega(x-z)^{-2p}\\
  &=&\Lambda_{2,2p}^{2p}
  \sum_{z:|z|\leq \frac{1}{2}|x|}\omega(|x-z|)^{-2p}\tilde\omega(z)^{-2p}
 + \Lambda_{1,2p}^{2p}\sum_{z:|z|>\frac{1}{2}|x|}\omega(|z|)^{-2p}\tilde\omega(x-z)^{-2p},
\end{eqnarray*}
where the last identity holds thanks to the identity $\expec{|\nabla\nabla G(x,y)|^{2p}}=\expec{|\nabla\nabla G(x-y,0)|^{2p}}$, which itself is a consequence of the stationarity of $\expec{\cdot}$. By monotonicity of $\omega$ we have 
\begin{equation*}
  \begin{aligned}
    &|z|\leq \frac{1}{2}|x|\quad\Rightarrow\quad     |x-z|\geq \frac{1}{2}|x|\quad \Rightarrow\quad \omega(|x-z|)^{-2p}\leq\omega(\frac{1}{2}|x|)^{-2p}\leq \Lambda_0^{2p}\omega(|x|)^{-2p},\\
    &|z|>\frac{1}{2}|x|\quad\Rightarrow\quad  \omega(|z|)^{-2p}\leq\omega(\frac{1}{2}|x|)^{-2p}\leq\Lambda_0^{2p}\omega(|x|)^{-2p},
  \end{aligned}
\end{equation*}
and since $\tilde\omega^{-2p}$ is summable (cf. \eqref{P:annealed:1}), the claimed estimate follows.
\medskip

\step 2 Conclusion.

From the LSI (cf. \cite[Lemma 4]{MO1}) we deduce that for all $\delta>0$:
\begin{eqnarray*}
  \expec{|\nabla G(x,0)|^{2p}}^{\frac{1}{2p}}
  &\leq &
  C(d,\delta,p,\rho)\expec{|\nabla G(x,0)|}+\delta\max_{i}\expec{\left(\sum_{z\in\Z^d}\big(\osc_{a(x)}\nabla_{i}G(x,0)\big)^2\right)^p}^{\frac{1}{2p}}\\
  \expec{|\nabla\nabla G(x,0)|^{2p}}^{\frac{1}{2p}}
  &\leq &
  C(d,\delta,p,\rho)\expec{|\nabla\nabla G(x,0)|}+\delta\max_{i,j}\expec{\left(\sum_{z\in\Z^d}\big(\osc_{a(x)}\nabla_{i}\nabla_jG(x,0)\big)^2\right)^p}^{\frac{1}{2p}}.
\end{eqnarray*}
Furthermore, we note that
\begin{eqnarray*}
  \osc_{a(z)}\nabla_{i} G(a;x,0)&\leq& C(d,\lambda)|\nabla\nabla G(a;x,z)||\nabla G(a;z,0)|,\\
  \osc_{a(z)}\nabla_{i}\nabla_j G(a;x,0)&\leq& C(d,\lambda)|\nabla\nabla G(a;x,z)||\nabla\nabla G(a;z,0)|,
\end{eqnarray*}
as can be seen by an argument similar to Lemma~\ref{L:RP} (see also \cite[Lemma~5]{MO1}).
The combination of the previous estimates with Step~1 yields the two estimates
\begin{eqnarray*}
  \expec{|\nabla G(x,0)|^{2p}}^{\frac{1}{2p}}
  &\leq&
  C(d,\delta,p,\rho)\expec{|\nabla G(x,0)|}+\delta C(d,\lambda,p)\omega(|x|)^{-1}\Lambda_0(\Lambda_{1,2p}+\Lambda_{2,2p}),\\
  \expec{|\nabla\nabla G(x,0)|^{2p}}^{\frac{1}{2p}}
  &\leq&
  C(d,\delta,p,\rho)\expec{|\nabla\nabla G(x,0)|}+\delta C(d,\lambda,p)\omega(|x|)^{-1}\Lambda_0\Lambda_{2,2p},
\end{eqnarray*}
We multiply with $\omega(|x|)$, take the sup in $x$ and  finally get (by choosing $\delta=\delta(d,\lambda,p,\Lambda_0)$ sufficiently small):
\begin{eqnarray*}
  \Lambda_{1,2p}
  &\leq &
  C(d,\lambda,\rho,p,\Lambda_0)\Lambda_{1,1}+\frac{1}{2}(\Lambda_{1,2p}+\Lambda_{2,2p}),\\
  \Lambda_{2,2p}
  &  \leq &
  C(d,\lambda,\rho,p,\Lambda_0)\Lambda_{2,1}+\frac{1}{2}\Lambda_{2,2p}.
\end{eqnarray*}

\end{proof}

\appendix

\section{Proof of Lemma~\ref{L:Gint}}
Thanks to the shift-invariance $G_T(a;x,y)=G_T(a(\cdot+y);x-y,0)$, it
suffices to prove the estimate for $y=0$. We set for brevity
\begin{equation*}
  G(x):=G_T(a;x,0)
\end{equation*}
and recall that $G$ is the unique solution in $\ell^2(\Z^d)$ to 
\begin{equation}\label{eq:2}
  \frac{1}{T}G+\nabla^*(a\nabla G)=\delta.
\end{equation}
By discreteness and the standard energy estimate, we have
\begin{equation*}
  \frac{1}{T}|G(0)|^2\leq \frac{1}{T}\sum_{x}|G(x)|^2 + \lambda \sum_x |\nabla G(x)|^2 \leq G(0).
\end{equation*}
Hence, $0\leq G(0)\leq T$ and we have that
\begin{equation}\label{eq:3}
  \sum_{x}\big(|G(x)|^2+|\nabla G(x)|^2\big)\leq C(T,\lambda).
\end{equation}
Formally we may upgrade~\eqref{eq:2} to the statement of Lemma~\ref{L:Gint} by testing the
equation with $e^{\frac{\delta }{2}|x|}G(x)$. Since that is not an
admissible $\ell^2(\Z^d)$ test function, we appeal to an approximation
of the form $\zeta G$ where
\begin{equation}\label{eq:16}
  \zeta(x):=\eta(x)e^{\delta g(x)},
\end{equation}
and $\eta,g:\Z^d\to\R$ are bounded, compactly supported and
non-negative functions, and $g$ mimics the behavior of the
linearly growing function $x\mapsto \frac{|x|}{2}$.  The truncation
via $\eta$ and the discrete Leibniz rule introduce error terms. In
order to treat these terms in a convenient way, we will appeal to test functions
$\eta$ and $g$ that  additionaly satisfy the following property:
\begin{equation}\label{eq:test-fun-supp}
  \begin{aligned}
\nabla_i\eta(x)\neq0\ \Rightarrow\ g(x)=g(x+e_i)=0\text{ for
      all $i=1,\ldots,d$ and $x\in\Z^d$.}
  \end{aligned}
\end{equation}
After these remarks we turn to the proof of \eqref{eq:3}. We first
establish a chain
rule inequality for test functions in the form of \eqref{eq:16}
assuming \eqref{eq:test-fun-supp}. In
Step~2 we test \eqref{eq:2} by $G\zeta$, and finally in Step~3 we
conclude by explicitly defining a sequence of test functions approaching $e^{\frac{\delta }{2}|x|}$. 
\medskip

{\bf Step 1.} Choice of test functions: For an arbitrary parameter $R\ge3$, say, we first construct the appropriate test functions $\eta$ and $g$. Let
\begin{enumerate}[(a)]
\item $\eta:\R^d\to\R$ be a smooth function satisfying
	\[
	{\eta}(x)=\begin{cases}
	1& \text{if }|x|\in[0,R],\\
	0& \text{if }|x|\in[R+1,\infty),
	\end{cases}
	\quad\text{such that }|\nabla\eta|\leq 2,
	\]
\item and $g:\R^d\to\R$ be a smooth function satisfying
	\[
	{g}(x)=\begin{cases}
	\frac{x}{2}& \text{if }|x|\in[0,\frac{R}{2}],\\
	0& \text{if }|x|\in[R-1,\infty),
	\end{cases}
	\quad\text{such that }|\nabla g|\leq 2.
	\]
\end{enumerate}
Furthermore we define $\zeta$ through~\eqref{eq:16}.
By construction $\eta$ and $g$ satisfy \eqref{eq:test-fun-supp} and there
exists a constant $C=C(d)>0$ independent of $R$ such that
\begin{equation}\label{eq:test-fun-bounds}
  \|\nabla\eta\|_{\ell^\infty(\Z^d)}+\|\nabla g\|_{\ell^\infty(\Z^d)}\leq C(d).
\end{equation}
Thus we have that
\begin{equation}\label{eq:14}
  |\nabla_i\zeta(x)| \le C(d)\big(\min\{|\zeta(x)|,|\zeta(x+e_i)|\} \delta + 1\big),
\end{equation}
Indeed, this is seen by writing $|\nabla_i\zeta|$ in the following two equivalent forms: On the one hand, an application of the discrete Leibniz rule
\[
\nabla_i(fg)(x)=\nabla_if(x)g(x)+f(x+e_i)\nabla_ig(x)
\]
yields
\begin{align*}
  \nabla_i\zeta(x)
  &=
  \eta(x+e_i)e^{\delta g(x+e_i)} -  \eta(x)e^{\delta g(x)}\\
  &=
  \eta(x+e_i)(e^{\delta g(x+e_i)}-e^{\delta g(x)})+(\eta(x+e_i)-\eta(x))e^{\delta g(x)}\\
  &=\eta(x+e_i)e^{\delta g(x+e_i)}(1-e^{-\delta \nabla_ig(x)})+\nabla_i\eta(x),
\end{align*}
since $(\eta(x+e_i)-\eta(x))e^{\delta g(x)}=\nabla_i\eta(x)$ by~\eqref{eq:test-fun-supp}.
On the other hand, a similar calculation yields
\begin{align*}
  \nabla_i\zeta(x)
  &=
  \eta(x)\nabla_i(e^{\delta g(x)})+\nabla_i\eta(x)e^{\delta g(x+e_i)}\\
  &=
  \zeta(x)(e^{\delta \nabla_ig(x)}-1)+ \nabla_i\eta(x).
\end{align*}
Therefore~\eqref{eq:14} follows from~\eqref{eq:test-fun-bounds}.

\medskip

{\bf Step 2}. Testing the equation with $\zeta$: We claim that there exists $\delta=\delta(d,\lambda,T) > 0$ such that
\begin{equation}\label{eq:18}
    \sum_x |\zeta(x)|^2(|G(x)|^2+|\nabla G(x)|^2) \leq C(d,\lambda,T) \Big(G(0) + \sum_{x}(|G(x)|^2+|\nabla G(x)|^2) \Big).
\end{equation}
for all $R\ge3$, say.
Our argument is as follows: The discrete Leibniz rule yields
\begin{align*}
  |\zeta(x)|^2\nabla_iG(x)
  &=
  \nabla_i(\zeta^2G)(x)-G(x+e_i)\nabla_i(\zeta^2(x))\\
  &=
  \nabla_i(\zeta^2G)(x)-G(x+e_i)\nabla_i\zeta(x)\big(\zeta(x)+\zeta(x+e_i)\big)\\
  &=
  \nabla_i(\zeta^2G)(x)-G(x+e_i)\nabla_i\zeta(x)\big(2\zeta(x)+\nabla_i \zeta(x)\big).
\end{align*}
Together with ellipticity of $a$, cf.~\eqref{ass:ell}, we obtain that
\begin{align*}
  &\frac{1}{T}\sum_x |G(x)|^2|\zeta(x)|^2+\lambda\sum_x|\zeta(x)|^2|\nabla G(x)|^2\\\nonumber
  &\leq \frac{1}{T}\sum_x |G(x)|^2|\zeta(x)|^2+\sum_x|\zeta(x)|^2\nabla
  G(x)\cdot a(x)\nabla G(x)\\
  &=
  \frac{1}{T}\sum_x |G(x)|^2|\zeta(x)|^2+\sum_x\nabla(\zeta^2
  G)(x)\cdot a(x)\nabla G(x)\\
  &\qquad\qquad -\sum_{x,i,j}G(x+e_i)\nabla_i\zeta(x)\big(2\zeta(x)+\nabla_i \zeta(x)\big)\,a_{ij}(x)\nabla_j G(x).
  \end{align*}
By the defining equation~\eqref{eq:2} for $G$, the second-to-last line equals $G(0)|\zeta(0)|^2=G(0)$ (for the choice of test function in Step~1). Therefore Young's inequality and $|a|\le 1$ yield
\begin{multline}\label{eq:15}
  \frac{1}{T}\sum_x |G(x)|^2|\zeta(x)|^2+\lambda\sum_x|\zeta(x)|^2|\nabla G(x)|^2\\
  \le G(0) + \frac{1}{2\epsilon}\sum_{x,i} |G(x+e_i)|^2 |\nabla_i\zeta(x)|^2 + \epsilon\sum_{x,i} \big(2|\zeta(x)|^2+|\nabla_i \zeta(x)|^2\big) |\nabla_i G(x)|^2
\end{multline}
for all $\epsilon>0$.
The gradient estimate~\eqref{eq:14} of the test function $\zeta$ yields
\[
  \sum_{x,i}|G(x+e_i)|^2|\nabla_i\zeta(x)|^2 \leq C(d) \Big(\delta\sum_{x}|G(x)|^2|\zeta(x)|^2 + \sum_{x}|G(x)|^2\Big),
\]
as well as
\[
  \sum_{x}|\nabla_i\zeta(x)|^2|\nabla_i G(x)|^2 \leq C(d) \Big(\delta\sum_{x}|\nabla G(x)|^2|\zeta(x)|^2 + \sum_{x}|\nabla G(x)|^2\Big).
\]
Inserting the last two estimates into~\eqref{eq:15} yields
\begin{align*}
  &\frac{1}{T}\sum_x |G(x)|^2 |\zeta(x)|^2+\lambda\sum_x|\zeta(x)|^2|\nabla G(x)|^2\\
  &\le G(0) + \Big( \frac{C(d)\delta}{2\epsilon} + 2\epsilon \Big)\sum_{x}|\zeta(x)|^2 \big(|\nabla G(x)|^2 + |G(x)|^2 \big)\\ &\qquad\qquad\qquad\qquad+ C(d)\Big(\frac{1}{2\epsilon} + \epsilon \Big) \sum_{x} \big(|\nabla G(x)|^2 + |G(x)|^2 \big)
\end{align*}
for all $\epsilon, \delta>0$. An appropriate choice of $\epsilon$ and $\delta$, for instance $\epsilon = \sqrt{\delta}$ with $\delta = \delta(d,\lambda,T)$ small enough, allows us to absorb the sums involving $\zeta$ on the left-hand side and we obtain~\eqref{eq:18}.
\medskip

\textbf{Step 3.} Conclusion: We substitute the definition~\eqref{eq:16} into~\eqref{eq:18} and recall the construction of $\eta$ and $g$ in Step~1 to obtain that
\[
    \sum_{x\in\Z^d:|x|\le \frac{R}{2}} (|G(x)|^2+|\nabla G(x)|^2) e^{\delta(d,\lambda,T) |x|} \leq C(d,\lambda,T) \Big(G(0) + \sum_{x\in\Z^d}(|G(x)|^2+|\nabla G(x)|^2) \Big).
\]
for all $R\ge3$.
By~\eqref{eq:3}, the right-hand side is bounded by $C(d,\lambda,T)$ and therefore the claim follows upon letting $R\uparrow\infty$.

\qed%


\end{document}